\newbox\mybox
\def\overtag#1#2#3{\setbox\mybox\hbox{$#1$}\hbox to
  0pt{\vbox to 0pt{\vglue-#3\vglue-\ht\mybox\hbox to \wd\mybox
      {\hss$\ss#2$\hss}\vss}\hss}\box\mybox}
\def\undertag#1#2#3{\setbox\mybox\hbox{$#1$}\hbox to 0pt{\vbox to
    0pt{\vglue#3\vglue\ht\mybox\hbox to \wd\mybox
      {\hss$\ss#2$\hss}\vss}\hss}\box\mybox}
\def\lefttag#1#2#3{\hbox to 0pt{\vbox to 0pt{\vglue -6pt\hbox to
      0pt{\hss$\ss#2$\hskip#3}\vss}}#1}
\def\righttag#1#2#3{\hbox to 0pt{\vbox to 0pt{\vglue -6pt\hbox to
      0pt{\hskip#3$\ss#2$\hss}\vss}}#1}
\let\ss\scriptstyle
\def\splicediag#1#2{\xymatrix@R=#1pt@C=#2pt@M=0pt@W=0pt@H=0pt}
\def\Dot{\lower.2pc\hbox to 2pt{\hss$\bullet$\hss}}
\def\Circ{\lower.2pc\hbox to 2pt{\hss$\circ$\hss}}
\def\Vdots{\raise5pt\hbox{$\vdots$}}
\newcommand\lineto{\ar@{-}}
\newcommand\dashto{\ar@{--}}
\newcommand\dotto{\ar@{.}}
\renewcommand{\setminus}{\smallsetminus}
\newtheorem*{theorem*}{Theorem}
\newtheorem*{corollary*}{Corollary}
\newtheorem{theorem}{Theorem}[section]
\newtheorem{Corollary}[theorem]{Corollary}
\newtheorem{thm}{Theorem}[section]
\newtheorem{prop}[thm]{Proposition}
\newtheorem{lemm}[thm]{Lemma}
\newtheorem{coro}[thm]{Corollary}
\newtheorem{rema}[thm]{Remark}
\newtheorem{nota}[thm]{Notation}
\newtheorem{defi}[thm]{Definition}
\newtheorem{exam}[thm]{Example}
\title{Lipschitz geometry of complex surface germs via inner rates of primary ideals}
\author{Yenni Cherik}
\address{Aix-Marseille Universit\'e, CNRS, Centrale Marseille, I2M, Marseille, France}
\date{}
\email{\href{mailto:yenni.cherik@univ-amu.fr}{yenni.cherik@univ-amu.fr}}
\keywords{Complex Surface Singularities, Resolution Of Singularities, Polar Curves, Lipschitz Geometry, Non Archimedean Geometry}
\begin{document}


\subjclass[2020]{Primary 32S05, 32S25, 32B10 Secondary 14J17, 14B05, 32S55}

\maketitle

\begin{abstract}
Let $(X, 0)$ be a normal complex surface germ embedded in $(\mathbb{C}^n, 0)$, and denote by $\mathfrak{m}$ the maximal ideal of the local ring $\mathcal{O}_{X,0}$. In this paper, we associate to each $\mathfrak{m}$-primary ideal $I$ of $\mathcal{O}_{X,0}$ a continuous function $\mathcal{I}_I$ defined on the set of positive (suitably normalized) semivaluations of $\mathcal{O}_{X,0}$. We prove that the function $\mathcal{I}_{\mathfrak{m}}$ is determined by the outer Lipschitz geometry of the surface $(X, 0)$. We further demonstrate that for each $\mathfrak{m}$-primary ideal $I$, there exists a complex surface germ $(X_I, 0)$ with an isolated singularity whose normalization is isomorphic to $(X, 0)$ and $\mathcal{I}_I = \mathcal{I}_{\mathfrak{m}_I}$, where $\mathfrak{m}_I$ is the maximal ideal of $\mathcal{O}_{X_I,0}$. Subsequently, we construct an infinite family of complex surface germs with isolated singularities, whose normalizations are isomorphic to $(X,0)$ (in particular, they are homeomorphic to $(X,0)$) but have distinct outer Lipschitz types.
 \end{abstract}
 
\tableofcontents

\section*{Quick overview}

Let $(X,0)$ be a germ of complex space embedded in $(\mathbb{C}^n,0)$. It is well known that the \textit{topological type} of $(X,0)$, that is its class of homeomorphism, determines and is determined by the homeomorphism class of its link $X^{(\epsilon)} = X \cap S_{\epsilon}^{2n-1}$ for $\epsilon >0$ sufficiently small, where $S_{\epsilon}^{2n-1}$ is the sphere of radius $\epsilon$ . Thanks to this result, the topology of germs of complex spaces is quite well understood; however, it is not the case for their metric type, and that is what we are going to investigate in this paper.

 Let us first give some historical context: the standard metric of $\mathbb{C}^n$ induces two natural distances on $(X,0)$. The \textit{outer metric}, denoted $d_o$, is defined by restricting the Euclidean distance of $\mathbb{C}^n$ to $X$, and the \textit{inner metric}, referred to as $d_i$, quantifies the lengths of arcs on $X$. There are many motivations to study these metrics. Among them is the fact that the biholomorphic class of the germ $(X,0)$ determines the inner (resp. outer) \textit{Lipschitz geometry} of $(X,0)$ (see Definition \ref{bilequiv}). Another motivation arises from a result by Mostowski in \cite{Mostowski1985a}, which asserts that the set of complex germs up to bilipschitz equivalence with respect to the inner (resp. outer) metric is countable. This result was significantly generalized by Parusi\'nski in \cite{parulipschitz2} for the real case and then Nhan and Valette in \cite{valette1} for more general O-minimal structures.

  The complete classification of complex germs under bilipschitz equivalence is then more attainable compared to the analytic classification and, as expected, richer than the topological classification. However, in the case of germs of complex curves, the difference between Lipschitz geometry and topology does not appear. Indeed, a germ of curve $(C,0)$ embedded in $\mathbb{C}^n$ is metrically conical, meaning that it is inner bilipschitz equivalent to the cone over its link $C^{\epsilon} = C \cap \mathbb{S}_{\epsilon}$. As for the outer metric, it follows from the work of Teissier in \cite{Teissier1982} that any curve germ $(C,0) \subset (\mathbb{C}^n,0)$ is outer bilipschitz equivalent to a germ of a complex plane curve (embedded in $\mathbb{C}^2$). Later, Neumann and Pichon proved in \cite{NeumannPichon2014} that the outer Lipschitz geometry of a plane curve germ $(C,0) \subset (\mathbb{C}^2,0)$ determines and is determined by the embedded topological type of the curve germ.

   More recently, Neumann, Birbrair, and Pichon gave a complete classification of complex surface germs with isolated singularities up to bilipschitz equivalence with respect to the inner metric by means of the \textit{inner rates of complex surface germs} (see Definition \ref{genericBFP}), a topic we elaborate on in this paper. Furthermore, Neumann and Pichon's work in \cite{NP2016} establishes a connection between the outer metric of a complex surface germ $(X,0) \subset (\mathbb{C}^n,0)$ and the families of \textit{generic hyperplane sections} and  \textit{generic polar curves} of $(X,0)$ (see Definitions \ref{polar curve} and \ref{genericBFP}). Their result roughly states that the outer Lipschitz geometry of $(X,0)$ determines the relative position between these families of curves.

    In an early study, Belotto, Fantini, Némethi, and Pichon proved in \cite{BFNP} that when the homeomorphism class of a normal surface germ $(X,0)$ is fixed, there are, up to homeomorphism, only finitely many possible relative positions between the family of generic hyperplane sections and the family of polar curves associated with the generic projections on $(X,0)$.

     In this paper, we investigate what happens to the last mentioned result when we do not assume the normality of the complex surface germ $(X,0)$. Our result (Theorem \ref{thmM}) essentially states that when the homeomorphism class of the germ $(X,0)$ and the analytic type of its \textit{normalization} $(\overline{X},0)$ are fixed, there are, up to homeomorphism, infinitely many possible relative positions between the generic hyperplane sections and the generic polar curves of $(X,0)$. In particular, it follows from our result and the work of Neumann and Pichon described earlier that there are infinitely many outer Lipschitz geometries on complex surface germs even when we fix the homeomorphism class of the germs and the analytic type of their normalization.

\section*{Content of the paper}

In order to state our result precisely, we need to introduce some terminology first. Let $\pi: (X_{\pi}, E) \longrightarrow (X,0)$ be a \textit{good resolution} of $(X,0)$, that is, a proper bimeromorphic map which is an isomorphism outside of a simple normal crossing divisor $\pi^{-1}(0) = E$, called the \textit{exceptional divisor}. Let $\Gamma_{\pi}$ be the \textit{dual graph} associated with $\pi$, that is, the graph whose vertices are in bijection with the irreducible components of $E$ and such that the edges between two vertices $v$ and $v'$ correspond to the intersecting points between the components $E_v$ and $E_{v'}$. Each vertex $v$ of this graph is weighted with the self-intersection number $E_v^2$ and the genus $g_v$ of the corresponding compact complex curve $E_v$. Given a holomorphic function $f: (X,0) \longrightarrow (\mathbb{C},0)$, we denote by $f^*$ the \textit{strict transform} of $f$ by $\pi$, that is, the curve $f^* := \overline{\pi^{-1}(f^{-1}(0)) \backslash E}$. The following notations were introduced in \cite{BFNP}: given a vertex $v$ of $\Gamma_{\pi}$, we denote by $\ell_v$ the intersection multiplicity $\ell^* \cdot E_v$, where $\ell: (X,0) \longrightarrow (\mathbb{C},0)$ is a \textit{generic linear form} in the sense of \cite{BFP} (see Definition \ref{genericBFP}). We denote by $p_v$ the intersection multiplicity $\Pi \cdot E_v$ where $\Pi$ is the polar curve associated with a \textit{generic linear projection} $P: (X,0) \longrightarrow (\mathbb{C}^2,0)$ again in the sense of \cite{BFP} (see Definition \ref{genericBFP}). The \textit{blow up of the maximal ideal} (see Definition \ref{blowupoideal}) is the minimal modification which resolves the family of \textit{generic hyperplane sections} (See Definition \ref{genericBFP}) of $(X,0)$. The \textit{Nash transform} (see Definition \ref{classicnashmodif}) is the minimal modification which resolves the family of \textit{generic polar curves}, that is, the polar curves associated with the generic plane linear projections. We associate to the minimal good resolution $\pi: (X_{\pi}, E) \longrightarrow (X,0)$, which factors through the blow up of the maximal ideal  and the Nash transform  a triple $(\Gamma_{\pi}, L_{\pi}, P_{\pi})$ where $L_{\pi} = (\ell_v)_{v \in V(\Gamma_{\pi})}$ and $P_{\pi} = (p_v)_{v \in V(\Gamma_{\pi})}$, where $V(\Gamma_{\pi})$ denotes the set of vertices of $\Gamma_{\pi}$. Before stating our result, we will first recall a result of Belotto, Fantini, N\'emethi, and Pichon, which will give a bit more context.

 \begin{theorem*}[{\cite[Theorem A]{BFNP}}]\label{explorationpolairenemethi}
	Let $M$ be a real $3$-manifold. There exists finitely many triple $(\Gamma,L,P)$, where $\Gamma$ is a weighted graph and $L$ and $P$ are vectors in $(\mathbb{Z}_{\geq 0})^{V(\Gamma)}$, such that there exists a normal complex surface germ $(X,0)$  satisfying the following conditions:
	\begin{enumerate}
		\item The link of $(X,0)$ is homeomorphic to M.
		\item $(\Gamma,L,P)=(\Gamma_{\pi},L_{\pi},P_{\pi})$.
	\end{enumerate}
		
	\end{theorem*}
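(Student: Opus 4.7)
The plan is to bound each component of the triple $(\Gamma_{\pi}, L_{\pi}, P_{\pi})$ separately, starting from topological information fixed by $M$. By Neumann's plumbing theorem, the weighted dual graph $\Gamma_{0}$ of the \emph{minimal} good resolution of a normal surface germ is a homeomorphism invariant of its link, so for every normal $(X,0)$ with link homeomorphic to $M$ one obtains the same $\Gamma_{0}$. The graph $\Gamma_{\pi}$ of the minimal good resolution factoring through the blow-up of $\mathfrak{m}$ and through the Nash transform is obtained from the minimal good resolution by attaching finitely many \emph{bamboos} (chains of rational curves produced by additional infinitely near blow-ups). Thus the finiteness of the set of triples reduces to (a) bounding the integer values $\ell_{v}$ and $p_{v}$ that can arise, and (b) bounding the number and the combinatorial shape of the bamboos.

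For (a), I would exploit the intersection-theoretic description of $L_{\pi}$ and $P_{\pi}$. Writing $Z_{\ell}=\sum_{v} m_{v} E_{v}$ for the divisorial part of $\ell\circ\pi$, with $m_{v}=\nu_{v}(\ell)$, the identity $(Z_{\ell}+\ell^{*})\cdot E_{v}=0$ yields the linear system $-\sum_{w} m_{w}(E_{w}\cdot E_{v})=\ell_{v}$. Since the intersection matrix of $E$ is negative definite and topologically determined, the vectors $(m_{v})$ and $(\ell_{v})$ determine each other, so bounding $(\ell_{v})$ amounts to bounding $(m_{v})$. From below one has Artin's fundamental cycle $Z_{\min}\leq (m_{v})$, which is topological; from above one must show that the multiplicity vector of a generic element of an $\mathfrak{m}$-primary ideal on a normal surface germ with fixed link is dominated by a cycle depending only on $\Gamma_{0}$, for instance via a semicontinuity or adjunction argument that isolates a ``maximal admissible cycle'' supported on $E$. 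A parallel analysis for the polar multiplicities $n_{v}=\nu_{v}(\Pi)$, using the L\^{e}--Greuel/Laufer-type identity relating $\Pi$, $\ell$ and the relative canonical divisor $K_{\pi}$ on $X_{\pi}$, then produces a finite set of admissible $(p_{v})$.

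For (b), each bamboo grafted onto $\Gamma_{0}$ is introduced at a vertex in order to separate strict transforms of generic hyperplane sections or of generic polar curves from one another or from a component of $E$; hence the number of bamboos is bounded by the number of vertices $v$ of $\Gamma_{0}$ for which $\ell_{v}>0$ or $p_{v}>0$, and so, by (a), is finite. The length and the self-intersection sequence of each bamboo are forced by a Hirzebruch--Jung continued-fraction expansion of a rational number built from $\ell_{v}$, $p_{v}$ and the local topology of $\Gamma_{0}$ at $v$, and are therefore bounded as soon as (a) is established. The main obstacle is precisely the topological upper bound in (a): the lower bounds follow from the classical fundamental-cycle inequalities, but controlling $(m_{v})$ and $(n_{v})$ from above, uniformly in the analytic type of $(X,0)$ as it ranges over all normal germs with link $M$, is the genuine non-trivial topological input that must be extracted from the fixed $3$-manifold.
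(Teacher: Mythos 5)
This statement is cited in the paper as \cite[Theorem~A]{BFNP}; the paper does not contain a proof of it, so there is nothing internal to compare your proposal against. Judged on its own merits, your outline is plausible in its overall architecture and does touch the right algebraic ingredients (the negative-definite intersection matrix, the maximal-ideal cycle, and the adjunction/Lê--Greuel identity, which is precisely the inner rates formula stated as Theorem~\ref{laplacien} in the paper). However, it does not constitute a proof, for the reasons below.

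The central gap is the one you yourself flag in (a): you need a \emph{topological} upper bound on the cycle $(m_v)$ and on the polar cycle $(n_v)$, uniform over all normal $(X,0)$ with link $M$, and you offer only the hope of ``a semicontinuity or adjunction argument.'' This is not a minor technicality. The Artin fundamental cycle gives the lower bound for free, but the upper bound is a real theorem: for a fixed resolution graph, the geometric genus $p_g$ is bounded (Laufer), and a bound on $Z_{\mathfrak m}$ then has to be extracted from that, e.g.\ via vanishing theorems and the relation between $\mathcal O_{X_\pi}(-Z_{\mathfrak m})$ being generated by global sections and the Riemann--Roch/cohomological data controlled by $p_g$. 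Without naming and applying such a boundedness result, your step (a) collapses. The polar cycle is even more delicate: adjunction expresses $K_\pi + \ell^* - \Pi^*$ in terms of the canonical cycle and the divisor of $\pi^*(\mathrm d\ell_1\wedge \mathrm d\ell_2)$, but the latter has coefficients $m_v q_v^{\mathfrak m} + m_v - 1$, so bounding $P_\pi$ requires bounding the inner rates $q_v^{\mathfrak m}$ as well as $(m_v)$; you never address the inner rates, which in the actual argument of \cite{BFNP} carry the key constraints (they are $\ge 1$, vary linearly on edges with integral slopes, and satisfy the Laplacian relation). Finally, step (b) is stated too loosely: $\Gamma_\pi$ is obtained from $\Gamma_0$ not only by attaching bamboos at vertices but also by subdividing edges (blow-ups at double points), and the combinatorics of the extra part of $\Gamma_\pi$ is not a function of $(\ell_v,p_v)$ alone via Hirzebruch--Jung data --- it is governed by where and with what tangency the strict transforms of the generic hyperplane section and of the generic polar curve meet $E$, i.e.\ precisely by the inner-rate data you have not controlled. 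So the outline is sensible as a heuristic but the two load-bearing steps are missing.
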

In this paper, we investigate the following question: Does the statement of Theorem \cite[Theorem A]{BFNP} still hold if we relax the hypothesis from a normal complex surface germ to a complex surface germ with an isolated singularity ? The following result give a negative answer to this question

 \begin{theorem}[{Theorem \ref{belottofantininemethipichonbis}}]\label{thmM}\label{belotonamathifantinipichon}		Let $(X,0)$ be a normal complex surface germ. Let $\Lambda_X$ be the infinite set of  integrally closed $\mathfrak{m}$-primary ideals of $\mathcal{O}_{X,0}$ .	There exists a family of complex surface germs with an isolated singularity $\{(X_I,0)\}_{I \in \Lambda_X}$ such that

	\begin{enumerate}
		\item $(X,0)$ is homeomorphic to $(X_I,0)$ for all $I \in \Lambda_X$ and is the common normalization of the germs $\{(X_I, 0)\}_{I \in \Lambda_X}$.	
		
		\item For all $I,J$ in $\Lambda_X$ we have that $ (\Gamma_{\pi_I},L_{\pi_I},P_{\pi_I}) = (\Gamma_{\pi_J},L_{\pi_J},P_{\pi_J})$ if and only if  $I=J$. where $\pi_I$ and $\pi_J$ are the minimal good resolutions which factors through the blow-ups of the maximal ideal and the Nash transform of $(X_I,0)$ and $(X_J,0)$ respectively.
		
		 \end{enumerate}

\end{theorem}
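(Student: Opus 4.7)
The plan rests on two technical pillars that I would establish earlier in the body of the paper: first, the construction, for each integrally closed $\mathfrak{m}$-primary ideal $I \in \Lambda_X$, of a complex surface germ $(X_I,0)$ with an isolated singularity whose normalization is isomorphic to $(X,0)$ and such that $\mathcal{I}_{\mathfrak{m}_I} = \mathcal{I}_I$ (as announced in the abstract); and second, a formula that recovers $\mathcal{I}_{\mathfrak{m}_I}$ from the combinatorial triple $(\Gamma_{\pi_I},L_{\pi_I},P_{\pi_I})$, in the spirit of the Belotto--Fantini--Pichon inner-rate formula. Granted these, the theorem is obtained by a relatively straightforward assembly.

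I would take $\{(X_I,0)\}_{I\in\Lambda_X}$ to be the family furnished by the first pillar. For item (1), since $(X,0)$ is a normal irreducible surface germ, its link is connected; the normalization map $\nu_I \colon (X,0)\to(X_I,0)$ is finite, proper, an isomorphism over $(X_I,0)\setminus\{0\}$, and $\nu_I^{-1}(0)=\{0\}$. Hence $\nu_I$ is a continuous bijection between compact representatives of the two germs and therefore a homeomorphism of germs, which yields (1).

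For item (2), suppose $(\Gamma_{\pi_I},L_{\pi_I},P_{\pi_I})=(\Gamma_{\pi_J},L_{\pi_J},P_{\pi_J})$ for some $I,J\in\Lambda_X$. The second pillar forces $\mathcal{I}_{\mathfrak{m}_I}=\mathcal{I}_{\mathfrak{m}_J}$, and by the defining property of the construction this rewrites as $\mathcal{I}_I=\mathcal{I}_J$. Since $\mathcal{I}_I$ records (up to the chosen normalization) the values $v(I)$ for every divisorial valuation $v$ on $\mathcal{O}_{X,0}$, and since an integrally closed $\mathfrak{m}$-primary ideal is characterized by its values on divisorial valuations, we conclude $I=J$. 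Finally, $\Lambda_X$ is manifestly infinite: the integral closures $\overline{\mathfrak{m}^n}$ for $n\ge 1$ are pairwise distinct and all lie in $\Lambda_X$.

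The main obstacle will be the second pillar: producing a formula that expresses $\mathcal{I}_{\mathfrak{m}_I}$ rigidly in terms of $(\Gamma_{\pi_I},L_{\pi_I},P_{\pi_I})$, rigid enough that coincidence of triples forces coincidence of the $\mathcal{I}$-functions on the whole space of semivaluations and not merely on the finitely many divisorial valuations carried by $\pi_I$. The subsequent implication $\mathcal{I}_I=\mathcal{I}_J\Rightarrow I=J$ is comparatively formal but still requires care outside the rational case, since one must appeal to a valuative characterization of integral closure of $\mathfrak{m}$-primary ideals on a general normal surface singularity.
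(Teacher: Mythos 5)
Your two-pillar decomposition is essentially the same as the paper's: Pillar 1 is Theorem \ref{immersion}, and Pillar 2 is the inner rate formula (Theorem \ref{laplacien}) together with Proposition \ref{nashvsrates}, which asserts that $\mathcal{I}_I$ is determined by its values on the vertices of $\Gamma_{\pi_I}$ once $\pi_I$ is the minimal good resolution factoring through $\mathrm{BL}_I$ and the principalization of $\Omega_I^2$. The homeomorphism argument in item (1) also matches the paper's. So the skeleton is correct.

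There is, however, a concrete inaccuracy in your final step. You assert that "$\mathcal{I}_I$ records (up to the chosen normalization) the values $v(I)$ for every divisorial valuation $v$." This is not what $\mathcal{I}_I$ is: by Definition \ref{inner-rate-idealdefi}, $\mathcal{I}_I(\mathrm{val}_{E_v}) = q_v^I = \frac{\nu_v(I)-m_v(I)+1}{m_v(I)}$, a ratio involving the order of vanishing of generic $2$-forms in $\Omega_I^2$, whereas $\mathrm{val}_{E_v}(I) = m_v(I)/m_v(\mathfrak{m})$. The function $\mathcal{I}_I$ does not directly read off $v(I)$. The paper closes this gap in Lemma \ref{integralclosure} by blowing up a smooth point of $E_v$ and invoking the recursion $q_w^I = q_v^I + 1/m_v(I)$ from Lemma \ref{recurenceI}; it is this infinitesimal slope, not the pointwise value of $\mathcal{I}_I$, that recovers $m_v(I)$, from which $I\cdot\mathcal{O}_{\overline{X_I}} = J\cdot\mathcal{O}_{\overline{X_I}}$ and then $\overline{I}=\overline{J}$ via the valuative criterion (Theorem \ref{jalabert}). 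You need Proposition \ref{integralclosure2} (or this argument) in place of your direct-read-off claim.

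You also gloss over a non-trivial bookkeeping step: the triple $(\Gamma_{\pi_I},L_{\pi_I},P_{\pi_I})$ is computed from the minimal good resolution of $(X_I,0)$ factoring through $\mathrm{BL}_{\mathfrak{m}_I}$ and the Nash transform of $(X_I,0)$, while Proposition \ref{nashvsrates} concerns the minimal good resolution of $(X,0)$ factoring through $\mathrm{BL}_I$ and the principalization of $\Omega_I^2$. One must prove these two resolutions coincide up to composition with $F_I$. The paper devotes the bulk of the proof to exactly this, using the universal property of blow-ups (Proposition \ref{hironakablowupideal}), Spivakovsky's characterization of the Nash transform (Theorem \ref{spivaknash}), and a basepoint argument for the polar families $\Pi_F$ and $\Pi_G$ for different precomplete systems of generators. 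Your "main obstacle" paragraph gestures at the rigidity needed but does not identify this resolution-matching step, which is where the real work lies.
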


We can apply this result to the study of the Lipschitz geometry of complex surface germs by combining it with the following result of Neumann and Pichon:

\begin{theorem*}[{\cite[Theorem 1.2]{NP2016}}]\label{NP16}
		Let $(X,0)$ be a complex surface germ with an isolated singularity. Then the outer Lipschitz geometry of $X$ determines the triple $(\Gamma_{\pi},L_{\pi},P_{\pi})$ where $\pi$ is the minimal good resolution which factors through the blow-up of the maximal ideal of $\mathcal{O}_{X,0}$ and the Nash transform of $(X,0)$.
	\end{theorem*}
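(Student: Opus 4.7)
The plan is to recover the three components $\Gamma_\pi$, $L_\pi$, $P_\pi$ of the triple from the outer metric in succession, using progressively refined Lipschitz data at each stage.

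For $\Gamma_\pi$, observe that the outer metric dominates the inner metric, so any outer bilipschitz homeomorphism is automatically inner bilipschitz. The classification of complex surface germs up to inner bilipschitz equivalence (Birbrair--Neumann--Pichon) shows that the inner Lipschitz type of $(X,0)$ determines the dual graph $\Gamma_\pi$ of the minimal good resolution factoring through the Nash transform, together with the inner rate at every vertex. Hence $\Gamma_\pi$ is recovered for free. For $L_\pi$, I would slice $(X,0)$ by a generic complex hyperplane $H$: the restriction of the outer metric to $X \cap H$ is, up to uniform bilipschitz constants, the outer metric of the plane curve germ $X \cap H$. Each $\ell_v$ is the intersection multiplicity of a generic hyperplane section's strict transform with $E_v$, and by the earlier Neumann--Pichon theorem for plane curves (outer Lipschitz type determines embedded topology), these multiplicities can be read off from the Lipschitz type of the hyperplane slices and then matched to the vertices of $\Gamma_\pi$ via the rate combinatorics already known.

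For $P_\pi$ --- the deep step --- I would fix a generic linear projection $P : (X,0) \to (\mathbb{C}^2,0)$ with polar curve $\Pi$. The discriminant $P(\Pi) \subset (\mathbb{C}^2,0)$ admits a carrousel decomposition (in the Neumann--Pichon sense), which pulls back via $P$ to a decomposition of $(X,0)$ into bilipschitz-standard pieces. Among these, the \emph{polar wedges} are horn-shaped neighborhoods of the branches of $\Pi$, distinguished by the fact that two sheets of $P$ come together much faster on the wedge in the outer metric than the inner metric predicts --- equivalently, ``fast loops'' join points that would otherwise be far apart along the inner geometry of $\Gamma_\pi$. The polar multiplicities $p_v$ are encoded in the shrinking rates of these wedges and in their combinatorial attachment to vertices of $\Gamma_\pi$. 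The crucial assertion is that the polar wedges coincide with the pieces of $(X,0)$ on which the ratio between outer and inner distances degenerates at a specific rate, so that they can be detected from the outer metric alone.

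The main obstacle is precisely this intrinsic characterization. The polar curve $\Pi$ depends on the extrinsic choice of $P$, so one must extract $P_\pi$ from metric features of $(X,0)$ that do not reference $P$ at all. This demands proving both that every generic projection produces the same combinatorial data $P_\pi$ (so the definition is consistent), and that this data is detectable from the outer metric alone (so it is a bilipschitz invariant). Making this precise rests on the fine structure of the carrousel decomposition of $(X,0)$ and on sharp estimates comparing $d_o$ and $d_i$ on the polar wedges versus their complements, showing that the wedges carry an intrinsic metric signature that any outer bilipschitz homeomorphism must respect, and that their shrinking rates --- once transported back to the graph $\Gamma_\pi$ via the rates already recovered --- yield exactly the integers $p_v$.
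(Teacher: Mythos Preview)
The paper does not contain a proof of this theorem. It is quoted as \cite[Theorem 1.2]{NP2016} --- a result of Neumann and Pichon from an external reference --- and is invoked as a black box to pass from Theorem~\ref{thmM} to Corollary~\ref{coroM}. There is therefore no proof in the present paper against which your proposal can be compared.

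That said, your sketch is broadly in the spirit of the actual Neumann--Pichon argument: the carrousel decomposition pulled back from the discriminant, the identification of polar wedges as the loci where outer and inner distances diverge, and the role of fast loops are all genuine ingredients of \cite{NP2016}. Two points in your outline deserve correction. First, your justification for recovering $\Gamma_\pi$ is stated backwards: it is the inner metric that dominates the outer one ($d_o \le d_i$), not the reverse; the correct reason an outer bilipschitz homeomorphism is also inner bilipschitz is that it is locally bilipschitz on the smooth part and therefore preserves lengths of rectifiable paths up to a uniform constant. Second, and more substantively, what you have written is a plan rather than a proof: the ``crucial assertion'' that the polar wedges are exactly the pieces on which the ratio $d_o/d_i$ degenerates at a prescribed rate, detectable without reference to any chosen projection, is precisely the hard content of \cite{NP2016}. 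You have accurately located the obstacle but not removed it. The actual argument in \cite{NP2016} is lengthy and technical, and the present paper makes no attempt to reproduce it.
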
		
As a consequence of this last result and Theorem \ref{thmM} we get the following

\begin{Corollary}[{Corollary \ref{belottofantinipichonlipschitzbis}}]\label{coroM}

Let $(X,0)$ be a normal complex surface germ. Let $\Lambda_X$ be the  infinite set of integrally closed $\mathfrak{m}$-primary ideals of $\mathcal{O}_{X,0}$.	There exists a family of complex surface germs with an isolated singularity $\{(X_I,0)\}_{I \in \Lambda_X}$ such that

	\begin{enumerate}
		\item $(X,0)$ is homeomorphic to $(X_I,0)$ for all $I \in \Lambda_X$ and is the common normalization of the germs $\{(X_I, 0)\}_{I \in \Lambda_X}$.	
		
		\item For all $I,J \in \Lambda_X$ the surfaces $(X_I,0)$ and $(X_J,0)$ are outer Lipschitz equivalent if and only if $I=J$.	
	\end{enumerate}
\end{Corollary}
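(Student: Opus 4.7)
The plan is to deduce this corollary as a direct consequence of Theorem \ref{thmM} combined with the cited result of Neumann and Pichon (\cite[Theorem 1.2]{NP2016}). Theorem \ref{thmM} already produces the desired family $\{(X_I,0)\}_{I\in\Lambda_X}$ satisfying condition (1) and distinguishes its members through their triples $(\Gamma_{\pi_I},L_{\pi_I},P_{\pi_I})$. The role of Neumann--Pichon's rigidity result is then to promote the distinctness of these combinatorial triples to the distinctness of outer Lipschitz types.

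First I would take $\{(X_I,0)\}_{I\in\Lambda_X}$ to be the family provided by Theorem \ref{thmM}. Property (1) of the corollary coincides exactly with property (1) of Theorem \ref{thmM}. Moreover each $(X_I,0)$ has an isolated singularity by construction, so the hypotheses of \cite[Theorem 1.2]{NP2016} are satisfied by every germ in the family, and for each $I$ the minimal good resolution $\pi_I$ factors through both the blow-up of the maximal ideal and the Nash transform, which is the setting in which the triple $(\Gamma_{\pi_I},L_{\pi_I},P_{\pi_I})$ is defined.

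For property (2), the implication ``$I=J \Rightarrow (X_I,0)$ and $(X_J,0)$ outer Lipschitz equivalent'' is tautological since then the two germs coincide. For the converse, suppose that $(X_I,0)$ and $(X_J,0)$ are outer Lipschitz equivalent. By the Neumann--Pichon theorem the triple is an invariant of the outer Lipschitz geometry, so
\[
(\Gamma_{\pi_I},L_{\pi_I},P_{\pi_I}) \;=\; (\Gamma_{\pi_J},L_{\pi_J},P_{\pi_J}).
\]
Property (2) of Theorem \ref{thmM} then forces $I=J$, which is exactly what we need.

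The entire difficulty of the corollary is therefore packaged inside Theorem \ref{thmM}; once the rigidity of the triple under outer Lipschitz equivalence is invoked as a black box, the deduction is a one-line bookkeeping exercise with no additional geometric input. In particular, no delicate reconciliation between the two theorems is required, because the statement of Theorem \ref{thmM} (common normalization, homeomorphism type, and distinctness of the resolution triple) is tailored exactly to the invariants that \cite[Theorem 1.2]{NP2016} extracts from the outer metric.
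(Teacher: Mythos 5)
Your argument is exactly the paper's: the corollary is deduced directly from Theorem \ref{thmM} together with Neumann--Pichon's rigidity theorem \cite[Theorem 1.2]{NP2016}, which makes the triple $(\Gamma_{\pi},L_{\pi},P_{\pi})$ an outer Lipschitz invariant. The paper states this deduction in a single sentence without elaboration, so your slightly more explicit write-up of the same logic is correct and faithful.
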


Even though it does not appear in the statements of Theorem \ref{thmM} and Corollary \ref{coroM}, the family of surfaces $\{(X_I,0)\}_{I \in \Lambda_X}$ can be explicitly constructed. In what follows, we will introduce the required tools and results needed for such a construction. Besides proving Theorem \ref{thmM}, the following results and constructions have their own independent interest.

We introduce an infinite family of numerical analytic invariants of $(X,0) \subset (\mathbb{C}^n,0)$ associated with an $\mathfrak{m}$-primary ideal $I$ of $\mathcal{O}_{X,0}$: let $\pi: (X_{\pi}, E) \longrightarrow (X,0)$ be a good resolution and $E_v$ be an irreducible component of $E$. The \textit{inner rate} of $I$ along $E_v$ is the rational number
\[ q_v^{I} := \frac{\nu_{v}(I) - m_v(I) + 1}{m_v(I)}, \]
where $m_v(I) := \inf \{ \mathrm{Ord}_{E_v}(h) \mid h \in I \}$, and $\nu_v(I) := \inf \{ \mathrm{Ord}_{E_v}(\pi^{*}(\mathrm{d}h_1 \wedge \mathrm{d}h_2)) \mid h_1, h_2 \in I \}$. These numbers generalize the notion of \textit{inner rates of complex surface germs} (See Definition \ref{genericBFP}) first introduced by Birbrair, Neumann, and Pichon in \cite{BNP} to study the inner Lipschitz geometry of complex surface germs. Indeed, let $\mathcal{N}_v$ be a small tubular neighborhood of $E_v$ with a small neighborhood of its double points removed. Roughly speaking, the inner rate of $(X,0)$ along the component $E_v$ is a rational number $q_v$ that measures how fast the diameter (with respect to the inner metric) of the set $\pi(\mathcal{N}_v) \cap \mathbb{S}_{\epsilon}^{2n-1}$  tends to zero as $\epsilon$ tends to zero. We explain in Remark \ref{maximalidealrate} that the rational number $q_v$, as equivalently defined in \cite{BFNP}, is equal to the number $q_{v}^{\mathfrak{m}}$, where $\mathfrak{m}$ is the maximal ideal of $\mathcal{O}_{X,0}$.\\

Studying the inner rates appears to be much more convenient through valuation theory than solely relying on dual graphs. Let us explain why. The \textit{Non-archimedean link} (See Definition \ref{linknonarchimedien}) is the set of positive and \textit{normalized semivaluations} defined on the completion of the local ring $\mathcal{O}_{X,0}$ endowed with the Tychonoff topology. Among these semivaluations, we have the \textit{divisorial valuations} defined as follows: for each good resolution $\pi$ of $(X,0)$ and for every vertex $v$ of $\Gamma_{\pi}$, we define the valuation $\mathrm{val}_{E_v}(f) := \frac{\mathrm{Ord}_{E_v}(f)}{\mathrm{inf} \{\mathrm{Ord}_{E_v}(h) \mid h \in \mathfrak{m} \}}$, known as the divisorial valuation associated with $E_v$. It turns out that the set of divisorial valuations is dense in $\mathrm{NL}({X,0})$ (See Lemma \ref{densiteremarque}). Each dual graph $\Gamma_{\pi}$ embeds into $\mathrm{NL}(X,0)$ by associating each vertex to its corresponding divisorial valuation. Conversely, $\mathrm{NL}(X,0)$ can be continuously retracted to $\Gamma_{\pi}$. Thus, we can establish a homeomorphism between the non-archimedean link and the inverse limit of the dual graphs obtained from all good resolutions. The topological space $\mathrm{NL}(X,0)$ can thus be viewed as a universal dual graph. This construction was first introduced by Favre and Jonsson in \cite{FavreJonsson2004} for the smooth case. It was generalized to the singular case by Favre in \cite{Favre2010}.

Next, for each $\mathfrak{m}$-primary ideal $I$ of $\mathcal{O}_{X,0}$, we define a real continuous function $\mathcal{I}_I$ on $\mathrm{NL}(X,0)$ such that its value on each divisorial valuation $\mathrm{val}_{E_v}$ of $\mathrm{NL}(X,0)$ is the inner rate $q_v^I$. Furthermore, by equipping $\mathrm{NL}(X,0)$ with an appropriate metric denoted $\mathrm{d}_I$, which depends on the ideal $I$, we demonstrate that the function $\mathcal{I}_I$ is linear on each edge of every graph $\Gamma_{\pi}$ where $\pi$ is a good resolution that factors through the blow up of $I$ (see Definition \ref{blowupoideal}) and the principalization of $\Omega_I^2$ (see Definition \ref{principalization}), generalizing the inner rate function introduced in \cite{BFP}. \\

The following theorem provides a geometric interpretation of the inner rates associated with an $\mathfrak{m}$-primary ideal $I$ as metric invariants of another surface explicitly constructed using a system of generators of $I$.

	   \begin{theorem}[Theorem \ref{immersion}]\label{thmz}
	Let $(X,0)$ be a complex surface germ with an isolated singularity and $I$ be a  $\mathfrak{m}$-primary ideal of $\mathcal{O}_{X,0}$. There exists a system of generators $F=(f_1,f_2,\dots,f_k)$ of $I$ such that the holomorphic map		
	$$\begin{array}{rcl}
F: (X,0) &\to& (F(X),0) \subset (\mathbb{C}^k,0) \\
p &\mapsto &(f_1(p),f_2(p),\dots,f_k(p))
\end{array} $$
verifies the following properties:
\begin{enumerate}
\item The image $(F(X),0)$ is a complex surface germ with an isolated singularity at the origin of $\mathbb{C}^k$. \label{point111}
\item The map $F$ is a homeomorphism on its image and a modification of $(F(X),0)$. \label{point222}
\item The induced map $\tilde{F}:(\mathrm{NL}(X,0), \mathrm{d}_I) \longrightarrow (\mathrm{NL}(F(X),0),\mathrm{d}_{\mathfrak{m}_F}), \tilde{F}(v)(h)=v(h \circ F), \forall v \in \mathrm{NL}(X,0), \forall h \in \mathcal{O}_{F(X),0}$ is an isometry which 	makes the following  diagram commute 
	$$ \xymatrix{
   ( \mathrm{NL}(X,0), \mathrm{d}_I) \ar[r]^{\tilde{F}}  \ar[rd]^{\mathcal{I}_{I}} & (\mathrm{NL}(F(X),0), \mathrm{d}_{\mathfrak{m}_F} ) \ar[d]^{\mathcal{I}_{\mathfrak{m}_{F}}} \\
      & \mathbb{R}_{+}^* \cup \infty
  },	$$ where $\mathfrak{m}_F$ is the maximal ideal of $\mathcal{O}_{F(X),0}$. \label{333}
    \end{enumerate}
   
  \end{theorem}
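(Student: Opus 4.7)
The plan is to reduce everything to the tautological pullback identity $F^*\mathfrak{m}_F = I$. Since $I$ is $\mathfrak{m}$-primary, it contains some power $\mathfrak{m}^N$ and its zero locus near $0$ is just $\{0\}$. I would first pick any finite generating system $f_1,\ldots,f_r$ of $I$ and consider the associated holomorphic map $F_0=(f_1,\ldots,f_r):(X,0)\to(\mathbb{C}^r,0)$, which is finite with $F_0^{-1}(0)=\{0\}$. Away from $0$, $(X,0)$ is smooth and $I\,\mathcal{O}_{X\setminus\{0\}}$ is locally principal, so by a Bertini-type genericity argument (enlarging the generating family if necessary, for example by throwing in enough generators of $\mathfrak{m}^N\subseteq I$ to realize a Veronese-like reembedding) I obtain a system $F=(f_1,\ldots,f_k)$ of generators of $I$ such that $F$ is an injective immersion on $X\setminus\{0\}$.

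Properties (\ref{point111}) and (\ref{point222}) then follow. The image $(F(X),0)\subset(\mathbb{C}^k,0)$ is a $2$-dimensional analytic germ since $F$ is finite and generically an immersion; it is smooth away from $0$ because $F$ is an injective immersion there, so its singular locus is contained in $\{0\}$; and $F$ is simultaneously a homeomorphism onto its image and a finite bimeromorphism, hence a modification.

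Statement (\ref{333}) rests on the identity $F^*\mathfrak{m}_F = I$, which is immediate because $\mathfrak{m}_F$ is generated by the coordinate functions on $\mathbb{C}^k$, whose pullbacks are exactly $f_1,\ldots,f_k$. For $v\in\mathrm{NL}(X,0)$ this gives
\[
\inf\{\tilde F(v)(h)\mid h\in\mathfrak{m}_F\}=\inf_i v(f_i)=\inf\{v(h)\mid h\in I\}=1,
\]
so $\tilde F$ lands in $\mathrm{NL}(F(X),0)$. Bijectivity and bicontinuity of $\tilde F$ follow from the fact that $F$ is a modification: every semivaluation on $\mathcal{O}_{F(X),0}$ extends uniquely along the finite extension $\mathcal{O}_{F(X),0}\hookrightarrow\mathcal{O}_{X,0}$. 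To prove commutativity of the diagram I would use the density of divisorial valuations (Lemma \ref{densiteremarque}) together with continuity of the functions $\mathcal{I}_I$ and $\mathcal{I}_{\mathfrak{m}_F}$ to reduce to the divisorial case $v=\mathrm{val}_{E_w}$, where I must check that the integers $m_w(I)$ and $\nu_w(I)$ computed on $(X,0)$ agree with $m_{\tilde F(w)}(\mathfrak{m}_F)$ and $\nu_{\tilde F(w)}(\mathfrak{m}_F)$ computed on $(F(X),0)$. The first agreement is $F^*\mathfrak{m}_F = I$; the second uses that $F$ is a modification, so it identifies meromorphic $2$-forms on $X$ and on $F(X)$, and consequently the principalization of $\Omega^2_I$ on $X$ matches the pullback of the principalization of $\Omega^2_{\mathfrak{m}_F}$ on $F(X)$. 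The isometry between $(\mathrm{NL}(X,0),d_I)$ and $(\mathrm{NL}(F(X),0),d_{\mathfrak{m}_F})$ is an analogous consequence of the ideal identity, since these metrics are built directly from the respective ideals.

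The main obstacle I anticipate is the initial Bertini-type step: one must make certain that a well-chosen generating family gives an injective immersion on $X\setminus\{0\}$, so that $(F(X),0)$ really has an isolated singularity rather than inheriting a curve of identifications from non-generic collisions of branches or rank drops. Once that is secured, the rest is a formal consequence of $F^*\mathfrak{m}_F=I$ combined with the functorial behaviour of semivaluations and inner rates under modifications.
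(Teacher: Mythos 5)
The broad architecture of your argument is right, but there is a genuine gap at the heart of part (\ref{333}) that the paper's proof addresses by a mechanism you omit: \emph{precompleteness} of the generating system.

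Your argument for the commutativity of the diagram reduces to the two equalities $m_v(\mathfrak m_F)=m_v(I)$ and $\nu_v(\mathfrak m_F)=\nu_v(I)$. The first follows formally from $F^*\mathfrak m_F=I$, as you say. The second does \emph{not}: one inequality is easy ($g\circ F,\,h\circ F\in I$ whenever $g,h\in\mathfrak m_F$, so $\nu_v(\mathfrak m_F)\geq\nu_v(I)$), but the reverse inequality requires knowing that the infimum defining $\nu_v(I)$ over \emph{all pairs} $h_1,h_2\in I$ is already attained by pullbacks from $\mathfrak m_F$, i.e.\ by wedge products $\mathrm{d}F_\alpha\wedge\mathrm{d}F_\beta$ of linear combinations of the chosen generators. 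That is exactly the content of Lemma \ref{nu}, and Lemma \ref{nu} requires the system $(f_1,\dots,f_k)$ to be \emph{precomplete} in the sense of Definition \ref{precompletegeneratorsystem}: the wedges $\{\mathrm{d}f_i\wedge\mathrm{d}f_j\}$ must generate the $\mathcal O_{X,0}$-module $\Omega^2_I$. Your appeal to ``$F$ is a modification, so it identifies meromorphic $2$-forms'' does not substitute for this; $F^*\Omega^2_{\mathfrak m_F}$ is the submodule of $\Omega^2_{X,0}$ generated by $\{\mathrm{d}f_i\wedge\mathrm{d}f_j\}$, and this equals $\Omega^2_I$ \emph{only if} $F$ is precomplete. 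The paper's Example \ref{precompleteexample} shows a generating system of an $\mathfrak m$-primary ideal of $\mathcal O_{\mathbb C^2,0}$ that is not precomplete, so the condition is not automatic. A generic Bertini-style enlargement guaranteeing an injective immersion does not by itself guarantee this module-theoretic condition.

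The paper takes the opposite route: it starts from a precomplete system (which exists because $\Omega^2_{X,0}$ is Noetherian), then \emph{deduces} that the resulting map $G=(f_1,\dots,f_k)$ is an immersion on $X\setminus\{0\}$ by a contradiction argument using elements $(a\ell_i+b\ell_m)^n\in\mathfrak m^n\subset I$, and finally forces injectivity by appending the products $f_i\ell_j$. This augmentation keeps the system precomplete, since a superset of a precomplete system is still precomplete. So precompleteness is both the engine for proving immersion and the ingredient you need to close part (\ref{333}); your proposal discards it and leaves (\ref{333}) unproved. Your remaining points --- using Proposition \ref{onetoonearchimede} for bijectivity of $\tilde F$, Remmert for analyticity of the image, and density of divisorial points to reduce to the computation on a single good resolution --- are consistent with the paper's argument, but the precompleteness gap is essential and would need to be filled, e.g.\ by starting from a precomplete system as the paper does.
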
 
  
Another question that naturally arises is: given two $\mathfrak{m}$-primary ideals $I$ and $J$ of $\mathcal{O}_{X,0}$, at what condition do we have the equality $\mathcal{I}_I = \mathcal{I}_J$? We provide a partial answer by proving that if $\mathcal{I}_I = \mathcal{I}_J$ then $I$ and $J$ have the same integral closure (Proposition \ref{integralclosure2}).

Now, let us briefly outline the proof of Theorem \ref{thmM} with all the tools we have introduced. The proof proceeds as follows: For each integrally closed $\mathfrak{m}$-primary ideal $I$ of $\mathcal{O}_{X,0}$, we construct a complex surface germ with an isolated singularity $(X_I,0)$ using Theorem \ref{thmz}. We then demonstrate that the function $\mathcal{I}_I$ is determined and determines the triple $(\Gamma_{\pi_I}, L_{\pi_I}, P_{\pi_I})$ introduced just before the statement of Theorem \ref{thmM}. The key idea to establish this result relies on the use of the inner rate formula \ref{laplacien}, initially proven in \cite{BFP} and generalized in \cite{yenni1}. This formula provides us that the vector $P_{\pi_I}$ and the inner rates $(q_v^I)_{v \in V(\Gamma_{\pi_I})}$ are equivalent data when given the vector $L_{\pi_I}$. Finally, by invoking Proposition \ref{integralclosure2} mentioned right before this paragraph, we conclude that if $I \neq J$, then $(\Gamma_{\pi_I}, L_{\pi_I}, P_{\pi_I}) \neq (\Gamma_{\pi_J}, L_{\pi_J}, P_{\pi_J})$.

	 \subsection*{Acknowledgments}

I would like to express my deep gratitude to my PhD advisors, André Belotto and Anne Pichon, for their invaluable help and enthusiastic encouragement during the preparation of this paper. I also wish to thank Patrick Popescu-Pampu and Evelia García Barroso for reviewing my thesis and therefore significantly improving this paper. I would also like to thank Patrick Popescu-Pampu for suggesting the idea of working with the $\mathfrak{m}$-primary ideal of the local ring. Additionally, I extend my thanks to Jawad Snoussi for the very fruitful discussions about the potential relationship between Lipschitz geometry and normalization during my stay in Mexico. This work has been supported by the \textit{Centre National de la Recherche Scientifique (CNRS)}, which funded my PhD scholarship.

\section{Preliminaries}
		This section is dedicated to reviewing the necessary material for this paper. The definitions and results presented in Subsections \ref{subsection1} and \ref{subsection2} can be found in \cite{BFP, GignacRuggiero2017, MaugendreMichel2017, Michel2008, yenni1}, while the content of Subsections \ref{subsection3} and \ref{subsection4} is detailed in \cite{yenni1} and \cite{yennithese} .

		\subsection{Resolution of curves and surfaces}\label{subsection1}
In this subsection we introduce some classical tools about resolution of singularities of curves and surfaces. 

\begin{defi}[Resolution of singularities]
Let $(X,0)$ be a complex surface germ. A \textbf{resolution} of $(X,0)$ is a proper modification $\pi : (X_{\pi},E) \longrightarrow (X,0)$ such that $X_{\pi}$ is a smooth  complex surface  and the restricted  function $\pi_{|X_{\pi} \backslash E}: X_{\pi} \backslash E  \longrightarrow X \backslash \mathrm{Sing}(X)$  is a biholomorphism. The curve $E=\pi^{-1}(0)$ is called the \textbf{exceptional divisor}. The resolution $\pi : (X_{\pi},E) \longrightarrow (X,0)$ is   \textbf{good}  if $E$ is a simple normal crossing divisor i.e., it has smooth compact irreducible components and the singular points of $E$ are  transversal double points. Let $E_v$ be an irreducible component of $E$. A \textbf{curvette}  of $E_v$ is a  smooth curve germ which intersects  $E_v$ transversely at a smooth point of $E$.   \end{defi}
\begin{defi}[Strict transform and total transform of a curve germ]
Let $\pi : (X_{\pi},E) \longrightarrow (X,0)$ be a modification of $(X,0)$ and let $(C,0)$ be a curve germ  in $(X,0)$. The \textbf{strict transform} of $C$ by $\pi$ is the curve  $C^*$  in $X_{\pi}$ defined as the topological closure of the set $\pi^{-1}(C \backslash \{0\})$. Let $E_1,E_2,\ldots,E_n$ be the irreducible components of $E$ and $h:(X,0) \longrightarrow (\mathbb{C},0) $ be a holomorphic function.  The \textbf{total transform} of $h$ by $\pi$  is the principal divisor $(h \circ \pi)$ on $X_\pi$, i.e., 
$$ (h \circ \pi)= \sum_{i=1}^n m_i(h) E_i+h^*$$
where $m_i(h)$ is the order of vanishing  of the holomorphic function $h  \circ \pi $ on the irreducible component $E_i$ of $E$ and $h^*$ is the strict transform of the curve $h^{-1}(0)$.
\end{defi}




\begin{defi}[Good resolution of curve germ]
Let $(C,0)$ be a curve germ in $(X,0)$.  A proper modification $\pi : (X_{\pi},E) \longrightarrow (X,0)$ is  a \textbf{good resolution of $(X,0)$ and $(C,0)$} if it is a  good resolution of  $(X,0)$ such that
the strict transform $C^*$ is a disjoint union of curvettes.
\end{defi}

\begin{defi}[Dual graph of a resolution]
The \textbf{dual graph} of  a good resolution $\pi :(X_\pi,E) \longrightarrow (X,0) $ of $(X,0)$ is the graph $\Gamma_\pi$ whose vertices are in bijection with the irreducible  components of $E$ and  such that the  edges between  the vertices $v$ and $v'$  corresponding to $E_v$ and $E_{v'}$ are in bijection with   $E_v \cap E_{v'}$, each vertex $v$ of this graph is weighted with the self intersection number $E_v^2$ and the genus $g_v$ of the corresponding curve $E_v$. We denote by $V(\Gamma_{\pi})$ the set of vertices of $\Gamma_{\pi}$ and $E(\Gamma_{\pi})$ the set of edges. The \textbf{valency of a vertex $v$} is the number  $\mathrm{val}_{\Gamma_{\pi}}(v):=\left(\sum_{i \in V(\Gamma_{\pi}),  i \neq v }E_i  \right)\cdot E_v$.


\end{defi}

   \subsection{Non-archimedean link}\label{subsection2}
   
The following definitions and results can be found in \cite[Preliminaries]{BFP} and \cite[Section 2]{GignacRuggiero2017}.
Let $(X,0)$ be a complex surface germ with an isolated singularity. Denote by $\mathcal{O}=\widehat{\mathcal{O}_{X,0}}$ the completion of the local ring of $X$ at $0$ with respect to its maximal ideal.
\begin{defi}[Semivaluation]
A (rank $1$) \textbf{semivaluation} on $\mathcal{O}$ is a map $v:\mathcal{O} \longrightarrow \mathbb{R} \cup \{ +\infty \} $ such that, for every $f,g \in \mathcal{O}$ and every $\lambda \in \mathbb{C}^{*}$

\begin{itemize}

\item $v(fg)=v(f)+v(g)$
\item$ v(f+g) \geq \mathrm{min}\{v(f),v(g)\}$
\item $v(\lambda)=\left\{\begin{array}{@{}l@{}}
    + \infty \ \text{if} \ \lambda=0\\
    
     0 \ \ \ \ \text{if} \ \lambda \neq 0
  \end{array}\right.\,$.
\end{itemize}  A \textbf{valuation} is a  semivalutation such that $0$ is the only element sent to $+\infty$. 
\end{defi}

\begin{defi}[Evaluation of an ideal]
Given an ideal $I$ of $\mathcal{O}$ and a semivaluation $v$ we define the evaluation of $I$ by $v$ to be $$ v(I)=\mathrm{inf}\{v(h) \ | \ h \in I\}.$$
\end{defi}

\begin{exam}[Normalized divisorial valuations]\label{divisorial}  Let $\pi: (X_\pi,E) \longrightarrow (X,0)$ be a good resolution of $(X,0)$. Let $E_v$ be an irreducible component of the exceptional divisor $E$. Let $\mathfrak{m}$ be the maximal ideal of $\mathcal{O}$ and set $m_v(\mathfrak{m}):=\mathrm{inf}\{ m_v(h) \ | \ h \in \mathfrak{m}\}$. The map

 $$\begin{array}{rcl}
\mathrm{val}_{E_v}:\mathcal{O}&\to& \mathbb{R}_{+} \cup \{+ \infty \},\\
f &\mapsto &\frac{m_v(f)}{m_v(\mathfrak{m})}
\end{array} $$   is a valuation on $\mathcal{O}$. We call it the \textbf{divisorial valuation} associated with $E_v$.
\end{exam}

\begin{exam}[Normalized monomial valuations]\label{monomial}  Let $\pi: (X_\pi,E) \longrightarrow (X,0)$ be a good resolution of $(X,0)$. Let $p \in X_{\pi}$ be the intersecting point of two irreducible component $E_v$ and $E_{w}$ of the exceptional divisor $E$. Let $(x,y)$ be a local coordinate system centered at $p$ such that $x=0$ and $y=0$ are respectively the equations of $E_v$ and $E_w$. Then, for every element  $f$ of  $\mathcal{O}$ we have $f\circ \pi (x,y) = \sum_{ij}a_{ij}x^{i}y^{j} \in \mathbb{C} \llbracket x,y \rrbracket$. For every $t \in [0,1]$ we consider the valuation $$ \mu_t(f)=\min \left\{\frac{1-t}{m_v(\mathfrak{m})}i + \frac{t}{m_w(\mathfrak{m})}j \ |\ a_{ij} \neq 0 \right\}$$ called \textbf{Normalized monomial valuation} associated with $E_v$ and $E_w$ with weight $t$.  
Notice that $\mu_0= \mathrm{val}_{E_v}$ and $\mu_1= \mathrm{val}_{E_w}$. Then we denote by $[\mathrm{val}_{E_v}, \mathrm{val}_{E_w}]$ the set of the normalized monomial valuations $\mu_t$ for $t \in [0,1]$.

 \end{exam}

\begin{lemm}[{\cite[Subsection 2.1]{GignacRuggiero2017}}]\label{density}
With the same notation as in Example \ref{monomial}. If $t \in [0,1]$ is a rational number then the semivaluation $\mu_t$ is a divisorial valuation.

\end{lemm}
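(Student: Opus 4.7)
The plan is to realize $\mu_t$ as a divisorial valuation by constructing an explicit modification $\sigma: X_{\pi'} \to X_\pi$ whose exceptional locus contains a new irreducible component $F$ such that $\mathrm{val}_F = \mu_t$. The endpoints $t = 0, 1$ are already divisorial by Example \ref{divisorial}, so suppose $t \in (0,1)$. Writing $\alpha := (1-t)/m_v(\mathfrak{m})$ and $\beta := t/m_w(\mathfrak{m})$, we have $\mu_t(f) = \min\{\alpha i + \beta j : a_{ij} \neq 0\}$ whenever $f \circ \pi(x,y) = \sum a_{ij} x^{i} y^{j}$. Since $t \in \mathbb{Q}$, the positive ratio $\alpha/\beta$ is rational; write it in lowest terms as $\alpha/\beta = a/b$ with $a, b$ coprime positive integers.

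Starting from the intersection point $p = E_v \cap E_w$, I perform the standard sequence of point blow-ups associated to the Hirzebruch--Jung continued fraction expansion of $a/b$; this is a classical toric surface construction which yields a smooth modification $\sigma: X_{\pi'} \to X_\pi$ whose exceptional locus above $p$ is a chain of rational curves containing a distinguished component $F$ with $\mathrm{Ord}_F(x) = a$ and $\mathrm{Ord}_F(y) = b$. Setting $\pi' := \pi \circ \sigma$, the additivity of $\mathrm{Ord}_F$ and the ultrametric inequality give, for every $f \in \mathcal{O}$,
\[
\mathrm{Ord}_F(f \circ \pi') \;=\; \min\{a i + b j : a_{ij} \neq 0\} \;=\; \tfrac{a}{\alpha}\,\mu_t(f),
\]
so that $\mu_t$ and the normalized divisorial valuation $\mathrm{val}_F = \mathrm{Ord}_F/m_F(\mathfrak{m})$ are proportional as semivaluations on $\mathcal{O}$.

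To finish, it remains to check that the proportionality constant equals $1$, i.e.\ that $\mu_t(\mathfrak{m}) = 1$. The lower bound $\mu_t(\mathfrak{m}) \geq 1$ is termwise: for every $h \in \mathfrak{m}$ and every $(i, j)$ in the support of $h \circ \pi$ we have $i \geq m_v(\mathfrak{m})$ and $j \geq m_w(\mathfrak{m})$, hence $\alpha i + \beta j \geq \alpha\, m_v(\mathfrak{m}) + \beta\, m_w(\mathfrak{m}) = (1-t) + t = 1$. For the matching upper bound, I reduce to the case where $\pi$ factors through the blow-up of $\mathfrak{m}$ by means of a further finite sequence of point blow-ups, which only re-parametrizes the segment $[\mathrm{val}_{E_v}, \mathrm{val}_{E_w}]$ into sub-segments and leaves $\mu_t$ unchanged as a semivaluation on $\mathcal{O}$. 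Under this hypothesis, $\pi^{*}\mathfrak{m}$ is locally principal at $p$ and generated by the monomial $x^{m_v(\mathfrak{m})} y^{m_w(\mathfrak{m})}$, so a generic element $h \in \mathfrak{m}$ has $(m_v(\mathfrak{m}), m_w(\mathfrak{m}))$ in the support of $h \circ \pi$ and thus $\mu_t(h) \leq 1$. I expect this last normalization step to be the main technical obstacle, since it is where the specific definition of the divisorial valuations in Example \ref{divisorial} (normalization by $m_v(\mathfrak{m})$) plays a decisive role and where the role of the maximal ideal, rather than an arbitrary ideal, genuinely enters.
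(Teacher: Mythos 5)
Your Hirzebruch--Jung construction of the component $F$ is the standard way to realize a monomial valuation with rational weights as a divisorial one, and the computation $\mathrm{Ord}_F(f\circ\pi') = \min\{ai+bj\} = (a/\alpha)\,\mu_t(f)$ is correct (modulo the routine no-cancellation check at a generic point of $F$, which uses that $\gcd(a,b)=1$ so the exponent map is unimodular); the paper cites this lemma from \cite{GignacRuggiero2017} without giving a proof, but this is surely the intended argument. You are also right that everything hinges on $\mu_t(\mathfrak{m})=1$, and your termwise lower bound is fine.

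The gap is in the reduction for the upper bound. You assert that a further sequence of point blow-ups reduces to the case where $\pi$ factors through the blow-up of $\mathfrak{m}$ while ``leaving $\mu_t$ unchanged,'' and then argue at ``$p$'' with the original exponents $m_v(\mathfrak{m}), m_w(\mathfrak{m})$. But after you blow up, $p$ no longer exists, and $\mu_t$ becomes a monomial valuation at a new double point $p'$ with weights $(\alpha',\beta')$. The quantity you can then read off via local principality of $\pi''^{*}\mathfrak{m}$ at $p'$ is $\mu_t(\mathfrak{m}) = \alpha' m_{v'}(\mathfrak{m}) + \beta' m_{w'}(\mathfrak{m})$; to conclude that this equals $1$ you must track how $\alpha m_v(\mathfrak{m}) + \beta m_w(\mathfrak{m})$ transforms under a blow-up, and one checks that it is preserved precisely when $m_u(\mathfrak{m}) = m_v(\mathfrak{m}) + m_w(\mathfrak{m})$ for the new component $E_u$. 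That additivity is exactly Lemma \ref{recurenceI}(2) for $I=\mathfrak{m}$, which in the paper is proved \emph{under the hypothesis that $\pi$ already factors through the blow-up of $\mathfrak{m}$}, and it fails precisely when $p$ is a basepoint of $\mathfrak{m}$ --- the situation your reduction is trying to eliminate. So the reduction is circular as written. Equivalently, $\mu_t(\mathfrak{m})=1$ is equivalent to the assertion that $\bigl(m_v(\mathfrak{m}), m_w(\mathfrak{m})\bigr)$ lies in the Newton polyhedron of $\mathfrak{m}$ at $p$, i.e.\ that $p$ is not a basepoint of the generic hyperplane sections; you need to either establish this directly or recognize it as an implicit hypothesis in Example \ref{monomial} (it must hold for $\mu_t$ to lie in $\mathrm{NL}(X,0)$ at all), after which no reduction is needed and your locally-principal argument finishes the job.
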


\begin{defi}[Non-archimedean link]\label{linknonarchimedien}
The \textbf{non-archimedean link} $\mathrm{NL}(X,0)$ of $(X,0)$ is the topological set $$ \mathrm{NL}(X,0) = \{ v: \mathcal{O} \longrightarrow \mathbb{R}_{+} \cup \{+\infty\}  \ \text{semi-valuation} \ | \ v(\mathfrak{m}) =1 \ \text{and} \ v_{|\mathbb{C}^* } =0  \} $$
whose topology is induced by the product topology $(\mathbb{R}_{+} \cup \{+ \infty \})^{\mathcal{O}}$.
\end{defi}
Let $\pi: (X_\pi,E) \longrightarrow (X,0)$ be a good resolution of $(X,0)$. There exists an embedding 
$$i_{\pi} :\Gamma_{\pi} \longrightarrow \mathrm{NL}(X,0)$$ where $\Gamma_{\pi}$ is endowed with the topology such that each of its edge is homeomorphic to a real interval. There exists a continuous retraction $$r_{\pi} :\mathrm{NL}(X,0) \longrightarrow \Gamma_{\pi}$$ such that $r_{\pi} \circ i_{\pi} = \mathrm{Id}_{\Gamma_{\pi}}$. The embedding $i_{\pi}$ maps each vertex $v$ of $\Gamma_{\pi}$ to the normalized divisorial valuation associated with the component $E_v$, and each edge $e_{v,v'}$ that corresponds to a point $p$ of the intersection $E_v \cap E_{v'}$ to the set of normalized monomial valuations on $X_{\pi}$ at $p$. In this paper we will mostly make use of the following theorem which gives a description of the non-achimedean link in terms of dual graphs.

 \begin{thm}[{See e.g., \cite[Theorem 2.27]{GignacRuggiero2017} and \cite[ Theorem 7.9]{Jonsson2015}}] \label{universaldualgraph}
 The family of the continuous retractions $\{ r_{\pi} \ | \ \pi \ \text{is a good resolution of} \ (X,0) \}$ induces a  homeomorphism from $\mathrm{NL}(X,0)$ to the inverse limit of the dual graphs $\Gamma_{\pi}$
$$\mathrm{NL}(X,0) \cong \varprojlim_{ \pi } \Gamma_{\pi}. $$
 \end{thm}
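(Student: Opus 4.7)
The plan is to use the universal property of the inverse limit to upgrade the family $\{r_\pi\}$ into a single continuous map, to prove it is bijective, and then to conclude by a compact Hausdorff argument.

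Step~1 (compatibility and continuity). I would begin by checking that whenever $\pi'$ dominates $\pi$ through a morphism $\sigma \colon X_{\pi'} \to X_\pi$ of resolutions, the induced simplicial retraction $\sigma_\star \colon \Gamma_{\pi'} \to \Gamma_\pi$ (which collapses onto the corresponding edge or vertex of $\Gamma_\pi$ the chains of vertices and edges that $\sigma$ adds) satisfies $r_\pi = \sigma_\star \circ r_{\pi'}$. This is a local calculation: the center of $v \in \mathrm{NL}(X,0)$ on $X_\pi$ is the image under $\sigma$ of its center on $X_{\pi'}$, and the monomial description recalled in Example~\ref{monomial} transforms correctly under the explicit coordinate change associated to a single point blow-up. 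The universal property of the inverse limit then yields a continuous map
\[
r \colon \mathrm{NL}(X,0) \longrightarrow \varprojlim_\pi \Gamma_\pi,
\qquad v \longmapsto \bigl(r_\pi(v)\bigr)_{\pi}.
\]

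Step~2 (injectivity). Suppose $v \neq v'$ and pick $f \in \mathcal{O}$ with $v(f) \neq v'(f)$. Choosing a good resolution $\pi$ that principalizes the divisor $(f)$, the function $f \circ \pi$ is locally a monomial times a unit at every point of $E$. By Examples~\ref{divisorial} and~\ref{monomial}, at the centers of $v$ and $v'$ the values $v(f)$ and $v'(f)$ are explicit $\mathbb{Z}_{\geq 0}$-linear combinations of the values on the local coordinates cutting out the components of $E$; hence $r_\pi(v) = r_\pi(v')$ would force $v(f) = v'(f)$, contradicting the choice of $f$.

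Step~3 (surjectivity and conclusion). By Lemma~\ref{density}, the divisorial valuations are precisely the normalized monomial valuations with rational parameter that appear on some $\Gamma_\pi$, and under $r$ they correspond to the compatible systems that become a vertex in every sufficiently fine resolution. Since rational points (vertices and rational points on edges) are dense in each $\Gamma_\pi$, their preimages are dense in $\varprojlim_\pi \Gamma_\pi$. Now $\mathrm{NL}(X,0)$ is a closed subset of the compact Hausdorff product $(\mathbb{R}_+ \cup \{+\infty\})^{\mathcal{O}}$, hence itself compact Hausdorff, while $\varprojlim_\pi \Gamma_\pi$ is compact Hausdorff as an inverse limit of such. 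Consequently $r(\mathrm{NL}(X,0))$ is closed and dense, hence equals $\varprojlim_\pi \Gamma_\pi$, and a continuous bijection between compact Hausdorff spaces is automatically a homeomorphism.

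The main obstacle I expect is the compatibility identity of Step~1 in the case where the center of $v$ on $X_{\pi'}$ lies on an exceptional component of $E' = \pi'^{-1}(0)$ not present on $X_\pi$: one must verify combinatorially that the weight $t$ of the monomial valuation at a new double point of $E'$ is sent, under $\sigma_\star$, to the correct position on the appropriate edge of $\Gamma_\pi$, using the transformation of local coordinates under a single blow-up. Once this calculation is carried out for a single blow-up, the general case follows by induction on the length of the chain of blow-ups relating $\pi'$ to $\pi$; the rest of the argument is formal.
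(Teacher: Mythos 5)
The paper does not prove Theorem \ref{universaldualgraph} itself; it cites it to Gignac--Ruggiero and Jonsson. Your overall plan (compatibility of the retractions, pass to the inverse limit, prove bijectivity, conclude by compactness) is the standard architecture of those proofs, so you are not taking a genuinely different route. However, there is a real gap in Step~2.

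The injectivity argument claims that if $\pi$ principalizes $(f)$, then $r_\pi(v) = r_\pi(v')$ forces $v(f) = v'(f)$, because $v(f)$ is then a $\mathbb{Z}_{\geq 0}$-linear combination of the values of $v$ on local equations of components of $E$. This is only correct when the center of $v$ on $X_\pi$ avoids the strict transform $f^*$. At a point $p$ on $f^*$, in suitable local coordinates one has $f\circ\pi = u\, x^a y^b$ with, say, $x=0$ cutting out a component of $E$ and $y=0$ cutting out $f^*$, so $v(f) = a\,v(x) + b\,v(y)$; but $r_\pi(v)$ is the divisorial (or monomial) valuation supported on $E$ at $p$, which ignores the $y$-direction and returns only $a\,v_{E_w}(x)$ (up to normalization). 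In particular $r_\pi(v)(f) < v(f)$ whenever $b>0$ and $v(y)>0$. The extreme case is a curve semivaluation $v$ supported on $f^{-1}(0)$: then $v(f)=+\infty$ while $r_\pi(v)(f)$ is finite for \emph{every} good resolution $\pi$, and one can find quasi-monomial $v'$ with $r_\pi(v')=r_\pi(v)$ yet $v'(f)$ finite. So a single well-chosen $\pi$ does not separate $v$ from $v'$.

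The repair is a limiting statement rather than a one-shot principalization: one shows that for every $f$, the net $r_\pi(v)(f)$ is nondecreasing along refinements of $\pi$ and converges to $v(f)$ (with $+\infty$ allowed), essentially because further blow-ups push the center of $v$ away from $f^*$ when possible, and drive $m_w(f)$ to infinity when not. Then $r_\pi(v)=r_\pi(v')$ for \emph{all} $\pi$ gives $v(f)=v'(f)$ for all $f$, i.e.\ $v=v'$. Steps~1 and~3 are sound modulo the usual bookkeeping; in Step~3 it would be cleaner to say directly that $r(\mathrm{NL}(X,0))$ is compact (hence closed) and contains every divisorial point of $\varprojlim_\pi\Gamma_\pi$, which are dense because any basic open set $p_\pi^{-1}(V)$ with $V\subset\Gamma_\pi$ open contains a rational point of $\Gamma_\pi$, realizable as a vertex of some finer $\Gamma_{\pi'}$.
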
 
\begin{rema}\label{densiteremarque}
	It follows from Lemma \ref{density} that given a good resolution $\pi:(X_{\pi},E)\longrightarrow (X,0)$ and two irreducible components $E_{v}$ and $E_{w}$ of $E$  the set of divisorial valuations of $[\mathrm{val}_{E_v},\mathrm{val}_{E_w}]$ is dense in $[\mathrm{val}_{E_v},\mathrm{val}_{E_w}]$.\end{rema}
 Let $F: (X,0) \longrightarrow (Y,0)$ be a finite holomorphic map between two complex surface germ $(X,0)$ and $(Y,0)$. It induces a continuous map $\widetilde{F} : \mathrm{NL}(X,0) \longrightarrow \mathrm{NL}(Y,0)$. Indeed, we set
$F^{\#}:\widehat{ \mathcal{O}_{(Y,0)}} \longrightarrow \widehat{ \mathcal{O}_{(X,0)}}$ defined by  $F^{\#} (h)=h \circ F$. Hence

$$  \begin{array}{rcl}
\widetilde{F}:\mathrm{NL}(X,0)&\to&  \mathrm{NL}(Y,0).\\
v &\mapsto & \frac{v \circ F^{\#}}{v(F^*(\mathfrak{m}_Y))}
\end{array} $$ where $\mathfrak{m}_Y$ is the maximal ideal of the completion $\widehat{\mathcal{O}_{Y,0}}$. The map $\tilde{F}$ has the following properties
\begin{prop}[{\cite[Proposition 4.4]{GignacRuggiero2017}}]\label{onetoonearchimede} 	  The following holds for any finite holomorphic map $F:(X,0) \longrightarrow (Y,0)$
	\begin{enumerate}
		\item the map $\tilde{F}$ is surjective.
		\item every $v \in \mathrm{NL}(Y,0)$ has at most $N$ preimages under $\tilde{F}$, where $N$ is the degree of the fields extension $\frac{\mathrm{Frac}(\widehat{\mathcal{O}_X})}{F^*\mathrm{Frac}(\widehat{\mathcal{O}_Y})}$. In particular if $F$ is a finite modification then $\tilde{F}$ is one to one.	\end{enumerate}
\end{prop}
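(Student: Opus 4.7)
The strategy is to translate both statements into classical facts about extensions of valuations along the finite ring extension $F^{\#}\colon \widehat{\mathcal{O}_{Y,0}} \hookrightarrow \widehat{\mathcal{O}_{X,0}}$, and then to normalize carefully in order to return to the non-archimedean link. The subtle part throughout is handling semivaluations that are not genuine valuations, i.e.\ those whose kernel $v^{-1}(+\infty)$ is a nontrivial prime.

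For surjectivity, I would start with a chosen $v \in \mathrm{NL}(Y,0)$ and set $\mathfrak{p} := v^{-1}(+\infty)$, which is a prime ideal of $\widehat{\mathcal{O}_{Y,0}}$. Then $v$ descends to an honest rank-$1$ valuation $\bar v$ on $K := \mathrm{Frac}(\widehat{\mathcal{O}_{Y,0}}/\mathfrak{p})$. Because $F^{\#}$ is finite, hence integral, the going-up theorem produces a prime $\mathfrak{q}\subset \widehat{\mathcal{O}_{X,0}}$ with $(F^{\#})^{-1}(\mathfrak{q}) = \mathfrak{p}$, and passing to fraction fields yields a finite extension $K \hookrightarrow L := \mathrm{Frac}(\widehat{\mathcal{O}_{X,0}}/\mathfrak{q})$. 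The classical extension theorem for rank-$1$ valuations on finite extensions produces a valuation $\bar w$ on $L$ whose restriction to $K$ equals $e\cdot \bar v$ for some positive rational scalar $e$ (the ramification index). Pulling $\bar w$ back through $\widehat{\mathcal{O}_{X,0}} \twoheadrightarrow \widehat{\mathcal{O}_{X,0}}/\mathfrak{q} \hookrightarrow L$ gives a semivaluation $w_0$ on $\widehat{\mathcal{O}_{X,0}}$ with $w_0^{-1}(+\infty) = \mathfrak{q}$. After normalizing $w := w_0/w_0(\mathfrak{m}_X)$, both the ramification rescaling $e$ and the normalizing denominator cancel in the defining formula $\tilde F(w) = (w\circ F^{\#})/w(F^{\#}(\mathfrak{m}_Y))$, producing $\tilde F(w) = v$.

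For the bound in (ii), any two distinct preimages $w_1, w_2 \in \tilde F^{-1}(v)$ give rise, after clearing normalization factors, to two distinct extensions of $\bar v$ to an appropriate residue fraction field $L$ of a localization of $\widehat{\mathcal{O}_{X,0}}$. The classical bound on the number of extensions of a rank-$1$ valuation from $K$ to $L$ is $[L:K]$, which is at most $N = [\mathrm{Frac}(\widehat{\mathcal{O}_X}) : F^{\#}\mathrm{Frac}(\widehat{\mathcal{O}_Y})]$. When $F$ is a finite modification, $F^{\#}$ induces an equality of fraction fields, so $N=1$ and $\tilde F$ is injective on top of being surjective.

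The main obstacle I foresee is exactly the book-keeping around the normalization: the textbook extension theorems are stated for valuations on fields, not for semivaluations on complete local rings, so one must carefully choose the prime $\mathfrak{q}$ above $\mathfrak{p}$ via going-up, track the scalar $e$ introduced by ramification, and verify that the denominator $w(F^{\#}(\mathfrak{m}_Y))$ in the definition of $\tilde F$ absorbs it, so that $\tilde F(w)$ equals $v$ on the nose rather than a positive multiple. Once this normalization issue is handled, both statements of the proposition become direct translations of standard commutative-algebraic facts about extensions of valuations along finite integral extensions.
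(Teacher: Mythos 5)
The paper does not prove this proposition; it cites \cite[Proposition 4.4]{GignacRuggiero2017} without reproducing the argument, so there is no in-paper proof to compare against. Evaluating your proposal on its own merits: the surjectivity argument is sound. Going-up produces a prime $\mathfrak{q}$ over $\mathfrak{p}=v^{-1}(+\infty)$, the extension $K\hookrightarrow L=\mathrm{Frac}(\widehat{\mathcal{O}_{X,0}}/\mathfrak{q})$ is finite, a valuation $\bar w$ extending $\bar v$ exists, and the pullback $w_0$ is nonnegative on $\widehat{\mathcal{O}_{X,0}}$ because $\widehat{\mathcal{O}_{X,0}}/\mathfrak{q}$ is integral over $\widehat{\mathcal{O}_{Y,0}}/\mathfrak{p}$ and valuation rings are integrally closed; your cancellation check for the two normalizations in $\tilde F(w)=(w\circ F^{\#})/w(F^{\#}\mathfrak{m}_Y)$ is correct.

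The counting argument in part (ii), however, has a genuine gap. You assert that any two distinct preimages $w_1, w_2$ of $v$ yield two distinct extensions of $\bar v$ ``to an appropriate residue fraction field $L$'' --- but distinct preimages can have distinct kernels $\mathfrak{q}_1\neq\mathfrak{q}_2$. This already occurs for a double cover $F:(\mathbb{C}^2,0)\to(\mathbb{C}^2,0)$ and $v$ a curve semivaluation whose associated curve pulls back to two branches; then $w_1, w_2$ live on \emph{different} fields $L_i=\mathrm{Frac}(\widehat{\mathcal{O}_{X,0}}/\mathfrak{q}_i)$, and the bound $[L_i:K]\leq N$ applied to each one separately does not control the total number of preimages. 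What one needs is the inequality $\sum_{\mathfrak{q}\mid\mathfrak{p}}[\mathrm{Frac}(\widehat{\mathcal{O}_{X,0}}/\mathfrak{q}):K]\leq N$ summed over all primes lying above $\mathfrak{p}$, which is a nontrivial step --- it follows from a degree formula after localizing at $\mathfrak{p}$ to reduce to a finite module over a discrete valuation ring, and that reduction itself requires either normality of $\widehat{\mathcal{O}_{Y,0}}$ or a further argument. Your proof proposal does not address this point, and it is precisely where the substance of the bound lies.
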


    \subsection{Inner-rates of a finite morphism}\label{subsection3}
   
Let $(X,0)$ be a complex surface germ with an isolated singularity, and let $g, f : (X,0) \longrightarrow (\mathbb{C},0)$ be two holomorphic functions such that the morphism $\Phi = (g,f) : (X,0) \longrightarrow (\mathbb{C}^2,0)$ is finite. The aim of this subsection is to recall the notion of inner rates associated with a finite morphism $\Phi = (g,f) : (X,0) \longrightarrow (\mathbb{C}^2,0)$ as defined in \cite[Definition 2.2]{yennithese}, which generalizes the \textit{inner rates of complex surface germs} introduced in \cite[Definition 3.3]{BFP}. We will provide the proof of the necessary Proposition \ref{inner-rate} without additional comments. For more context and examples, we encourage readers to refer to the papers mentioned above.



\begin{defi} [{See e.g \cite[Definition 0.1]{KTP}}]\label{polar curve}  The \textbf{polar curve} of the morphism $\Phi $ is the curve $\Pi_{\Phi}$  defined as the topological closure of the critical locus of the finite morphism $\Phi=(g,f)$, that is
$$ \Pi_{\Phi} = \overline{  \{   x \in  X \backslash \{0\} \ | \ d_x \Phi : T_x X \longrightarrow \mathbb{C}^2 \ \text{ is non surjective} \}   }.$$ Equivalently it can be defined as the vanishing locus of the holomorphic $2$-form $\mathrm{d}g \wedge \mathrm{d}f$.
\end{defi}


\begin{prop}[{\cite[Proposition 2.3]{yennithese}, \cite[Proposition 2.4]{yenni1}}]\label{inner-rate}
Let $\pi :(X_{\pi},E) \longrightarrow (X,0)$ be a good resolution of $(X,0)$ and let $E_v$ be an irreducible component of the exceptional divisor $E$. Let us denote by $(u_1,u_2)=(g,f)$ the coordinates of $\mathbb{C}^2$ and by $\mathrm{d}$ the standard hermitian metric of $\mathbb{C}^2$. 
 Then, for every smooth point $p$ of $E$  in $E_v \backslash (f^* \cup g^* \cup \Pi_{\Phi}^*)$, there exists an open neighborhood  $O_p \subset E_v$ of $p$ such that for every pair of curvettes $\gamma_{1}^*, \gamma_{2}^*$ of $E_v$ verifying:
 
\setcounter{equation}{0}
\renewcommand{\theequation}{2.\arabic{equation}}\begin{equation}\label{cond}
\left\{
\begin{array}{rcl}
 \gamma_1^* \cap \gamma_2^*&=&\emptyset \\
\gamma_{i}^* \cap O_p &\neq& \emptyset \ \text{for} \ i=1,2.
\end{array}
\right.
\end{equation}
we have: $$ \mathrm{d}(\gamma_1 \cap \{u_2 = \epsilon \}, \gamma_2 \cap \{u_2= \epsilon \} ) =\Theta(\epsilon^{q_{g,v}^f}), \ q_{g,v}^f:=\frac{\mathrm{Ord}_{E_v}(\pi^{*}(\mathrm{d}g \wedge \mathrm{d}f))-m_v(f)+1}{m_v(f)}$$ where  $\gamma_1=(\Phi \circ \pi)(\gamma_1^*), \gamma_2=(\Phi \circ \pi) (\gamma_2^*)$ and $\epsilon \in \mathbb{R}$.  \end{prop}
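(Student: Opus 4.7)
The plan is to reduce, via local coordinates at $p$, the asserted estimate to a Taylor expansion of $g\circ\pi$ transverse to $E_v$ and then perform a sheet-by-sheet comparison on the fiber of $f$. First I would pick local coordinates $(x,y)$ on $X_\pi$ centered at $p=(0,y_0)$ with $E_v=\{x=0\}$. Since $p\notin f^*$ one has $f\circ\pi=x^a u(x,y)$ with $u(0,y_0)\neq 0$ and $a:=m_v(f)$, so a local $a$-th root of $u$ allows us to assume outright that $f\circ\pi=x^a$. Writing $G:=g\circ\pi$, a direct computation gives
\[
\pi^{*}(\mathrm{d}g\wedge \mathrm{d}f)=-a\,x^{a-1}G_y\,\mathrm{d}x\wedge \mathrm{d}y,\qquad \mathrm{Ord}_{E_v}(G_y)=N-a+1,
\]
where $N:=\mathrm{Ord}_{E_v}(\pi^{*}(\mathrm{d}g\wedge \mathrm{d}f))$. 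The hypothesis $p\notin\Pi_\Phi^{*}$ allows a factorization $G_y=x^{N-a+1}h(x,y)$ with $h(0,y_0)\neq 0$; writing also $G=x^{b}\tilde v$ with $b:=m_v(g)$ and $\tilde v(0,y_0)\neq 0$ (since $p\notin g^{*}$), I shrink $O_p\subset E_v$ so that $h$ and $\tilde v$ are uniformly bounded away from $0$ on the polydisk $O_p\times\{|x|\leq r_0\}$.

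Next I would parameterize the curvettes and the fiber. Let $p_i=(0,y_i)\in O_p$ be the base points of $\gamma_i^{*}$; as $\gamma_1^{*}\cap\gamma_2^{*}=\emptyset$, we have $y_1\neq y_2$, and each curvette may be written $\gamma_i^{*}=\{(x,\psi_i(x))\}$ with $\psi_i$ holomorphic and $\psi_i(0)=y_i$. The fiber $\{f\circ\pi=\epsilon\}$ decomposes locally into $a$ sheets $S_\zeta=\{x=\epsilon^{1/a}\zeta\}$, $\zeta^{a}=1$, so $(\Phi\circ\pi)(\gamma_i^{*}\cap S_\zeta)\in\mathbb{C}^{2}$ is the single point $\bigl(G(\epsilon^{1/a}\zeta,\psi_i(\epsilon^{1/a}\zeta)),\epsilon\bigr)$. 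The quantity to estimate is therefore
\[
\min_{\zeta_1^{a}=\zeta_2^{a}=1}\bigl|G(\epsilon^{1/a}\zeta_1,\psi_1(\epsilon^{1/a}\zeta_1))-G(\epsilon^{1/a}\zeta_2,\psi_2(\epsilon^{1/a}\zeta_2))\bigr|.
\]

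For the upper bound, I specialize to $\zeta_1=\zeta_2=\zeta$; a mean-value estimate in $y$ combined with the factorization $G_y=x^{N-a+1}h$ yields
\[
\bigl|G(\epsilon^{1/a}\zeta,\psi_1(\epsilon^{1/a}\zeta))-G(\epsilon^{1/a}\zeta,\psi_2(\epsilon^{1/a}\zeta))\bigr|=\Theta\!\left(\epsilon^{(N-a+1)/a}\right)\cdot|y_1-y_2|,
\]
which already gives the bound $O(\epsilon^{q_{g,v}^{f}})$. For the matching lower bound I would use the decomposition
\[
G(x,y)=G(x,y_0)+x^{N-a+1}\int_{y_0}^{y}h(x,t)\,\mathrm{d}t,
\]
so that the difference on two sheets splits as $A(\epsilon,\zeta_1,\zeta_2)+\epsilon^{(N-a+1)/a}B(\epsilon,\zeta_1,\zeta_2,y_1^{*},y_2^{*})$, where $A$ is independent of the curvettes and $B$ encodes the $y$-variation with $B|_{\epsilon=0}\neq 0$ whenever $y_1\neq y_2$. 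A short case analysis on $(\zeta_1,\zeta_2)$ (same sheet; $\zeta_1\neq\zeta_2$ with $\zeta_1^{b}\neq\zeta_2^{b}$; $\zeta_1\neq\zeta_2$ with $\zeta_1^{b}=\zeta_2^{b}$), combined with the basic inequality $b\leq N-a+1$ (a consequence of $G=x^{b}\tilde v$ and $G_y=x^{b}\tilde v_y$) and a further shrinking of $O_p$ so that the leading coefficient of $A+\epsilon^{(N-a+1)/a}B$ stays bounded away from $0$, gives the uniform lower bound $\geq c\,\epsilon^{(N-a+1)/a}$. Combining the two bounds yields the stated $\Theta(\epsilon^{q_{g,v}^{f}})$.

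The main obstacle is this last case: cancellation between the $\zeta$-driven term $A$ and the $y$-driven term $\epsilon^{(N-a+1)/a}B$ could a priori drop the distance below the predicted order. Ruling this out relies crucially on the nonvanishing of $h(0,y_0)$ (from $p\notin\Pi_\Phi^{*}$) and of $\tilde v(0,y_0)$ (from $p\notin g^{*}$), together with the freedom to shrink $O_p$ so that the finitely many "resonant" configurations of $(y_1,y_2)$ making the leading coefficient vanish are excluded by a continuity argument.
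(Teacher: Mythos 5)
Your plan follows the same core strategy as the paper's proof: choose local coordinates at $p$ with $E_v=\{x=0\}$ and $f\circ\pi=x^a$, express everything in terms of $G=g\circ\pi$, and use $p\notin\Pi_\Phi^*$ (via the identity $\pi^*(\mathrm{d}g\wedge\mathrm{d}f)=-a\,x^{a-1}G_y\,\mathrm{d}x\wedge\mathrm{d}y$) to guarantee that the leading coefficient is nonzero. The differences are in the packaging and in a point of rigor. The paper expands the unit $U$ with $g\circ\pi=x^{m_v(g)}U(x,y)$ as a bivariate power series, isolates the minimal exponents $k$ and $j_0$, shows $j_0=1$ from $p\notin\Pi_\Phi^*$, and only then reconciles the resulting exponent with $\mathrm{Ord}_{E_v}(\pi^*(\mathrm{d}g\wedge\mathrm{d}f))$. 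You replace that with the cleaner factorization $G_y=x^{N-a+1}h$ and the integral identity $G(x,y)=G(x,y_0)+x^{N-a+1}\int_{y_0}^y h(x,t)\,\mathrm{d}t$, getting the exponent $q^f_{g,v}=(N-a+1)/a$ directly. Both routes hinge on the same two nonvanishing facts ($h(0,y_0)\neq0$ from $p\notin\Pi_\Phi^*$, $\tilde v(0,y_0)\neq0$ from $p\notin g^*$).

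The more substantial divergence is your explicit treatment of the $a$ sheets of $\{u_2=\epsilon\}$ via the pairs $(\zeta_1,\zeta_2)$. The paper parametrizes the curvettes and then substitutes a single branch $t=\epsilon^{1/m_v(f)}$, effectively computing only the $\zeta_1=\zeta_2$ term and writing the equality $\mathrm{d}(\gamma_1\cap\{u_2=\epsilon\},\gamma_2\cap\{u_2=\epsilon\})=|\epsilon^{q_p m_v(g)/m_v(f)}|H(\epsilon)$ without discussing why the minimum over all $a^2$ sheet pairs is not smaller. Your case analysis on $(\zeta_1,\zeta_2)$, with the observation $m_v(g)\le N-a+1$, is exactly what is needed to justify the lower bound of the $\Theta$ estimate as a genuine set distance; this is a point the paper glosses over. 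Two small cautions on the execution: the blanket claim ``$B|_{\epsilon=0}\neq0$ whenever $y_1\neq y_2$'' is correct only in the sub-case $\zeta_1=\zeta_2$ — for $\zeta_1\neq\zeta_2$ with $\zeta_1^{m_v(g)}=\zeta_2^{m_v(g)}$ one must note that the leading coefficient becomes $\zeta_1^{N-a+1}\bigl(U(0,y_1)-U(0,y_2)\bigr)$, which is nonzero precisely because $\partial U/\partial y(0,y_0)\neq0$ (again the polar-curve hypothesis) makes $U(0,\cdot)$ locally injective; and the ``resonant configurations'' you mention are configurations of $(\zeta_1,\zeta_2)$, not of $(y_1,y_2)$, so the right phrasing is that shrinking $O_p$ in $y$ makes each of the finitely many $\zeta$-expressions uniformly bounded away from zero. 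With those refinements the argument closes, and no step in your plan would fail.
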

\begin{defi}\cite[Definition 2.3]{yenni1}\label{inner}
We call  $q_{g,v}^f$ the \textbf{inner rate of $f$ with respect to $g$ along $E_v$}.
\end{defi}
\begin{proof}[Proof of Proposition \ref{inner-rate}]
Let $p$ be a smooth  point of $E_v$ which does not belong to  the strict transforms $f^*,g^*$ and $\Pi_{\Phi}^*$. Let $(x,y)$ be a local system of coordinates of $X_{\pi}$ centered at $p$ such that $E_v$ has  local  equation $x=0$ and such that $(f \circ \pi)(x,y)=x^{m_v(f)}$ .
Let  $U$ be the  unit of $\mathbb{C}\{ x,y\}$ such that $ (g \circ \pi)(x,y) = x^{m_v(g)}U(x,y)$. We can write 
$$ U(x,y)=\sum_{i \geq 0} a_{i0}x^i  + \sum_{j \geq 1}y^j \sum_{i \geq 0} a_{ij} x^i$$
Since $\Phi$ is a  finite morphism  the set $\{ i \geq 0 \ | \ \exists j > 0, a_{ij} \neq 0  \}.$ is non empty. Let $k$ be its minimal element. Then,
$$ U(x,y)=g_{0}(x)  +x^k \sum_{j \geq 1}y^j g_{j}(x),$$
where  $g_{0}(x)=\sum_{i \geq 0} a_{i0}x^i$ and $g_{j}(x) =\sum_{i \geq 0} a_{ij} x^{i-k}$. Note that the set $\{ j > 0 \ | \ g_{j}(0) \neq 0  \}$ is non empty.

Setting $q_p:=\frac{m_v(g)+k}{m_v(g)}$, we then have:
\begin{eqnarray}\label{forme}
(g \circ \pi)(x,y) = x^{m_v(g)}g_{0}(x)+x^{q_pm_v(g)}\sum_{j \geq 1} y^j g_{j}(x).   \end{eqnarray}

\noindent Let $\gamma_{1}^*$ and $\gamma_{2}^*$ be two curvettes of $E_v$ parametrized respectively by
$$ t \mapsto (t,\alpha + th_1(t)), \ t \mapsto (t,\beta + th_2(t) ), \ \alpha,\beta \in \mathbb{C} $$
where $h_1$ and $h_2$ are convergent power series. The curves $\gamma_1= (\Phi \circ \pi)(\gamma_1^*) $ and $\gamma_2=(\Phi \circ \pi)(\gamma_2^*)$  are then parametrized respectively by
$$  t \mapsto (t^{m_v(g)}g_{0}(t)+t^{q_pm_v(g)}\sum_{j \geq1}(\alpha+
th_1(t))^j g_{j}(t) ,t^{m_v(f)})$$ and
$$  t \mapsto (t^{m_v(g)}g_{0}(t)+t^{q_pm_v(g)}\sum_{j \geq1}(\beta+
th_2(t))^j g_{j}(t) ,t^{m_v(f)}).$$
Therefore, for $\epsilon > 0$, we have $$\mathrm{d}(\gamma_1 \cap \{ u_2 = \epsilon \}, \gamma_2 \cap \{ u_2 =\epsilon\}) 
=\displaystyle\left\lvert \epsilon^{\frac{q_p m_v(g)}{m_v(f)}} \displaystyle\right\rvert H(\epsilon), $$ where 
$$H(\epsilon)=\displaystyle\left\lvert\sum_{j \geq 1 }\left( (\alpha+ \epsilon^{\frac{1}{m_v(f)}}h_1(\epsilon^{\frac{1}{m_v(f)}}))^j - (\beta + \epsilon^{\frac{1}{m_v(f)}}h_2(\epsilon^{\frac{1}{m_v(f)}}))^j \right) g_{j}(\epsilon^{\frac{1}{m_v(f)} } ) \displaystyle\right\rvert.$$ We need to prove that  $$ H(0)=\displaystyle\left\lvert \sum_{j \geq 1 }\left( \alpha^j - \beta^j \right) g_{j}(0) \displaystyle\right\rvert$$ does not vanish when $\alpha$ and $\beta$ are distinct and in a small enough neighborhood of the origin of $\mathbb{C}$.
Let $j_0 > 0$ be the minimal element of the set $\{ j > 0 \ | \ g_{j}(0) \neq 0  \}$ then 
$$ H(0)=\displaystyle\left\lvert (\alpha^{j_0}-\beta^{j_0})g_{j_0}(0) + \sum_{j \geq 1, j \neq j_0 }\left( \alpha^j - \beta^j \right) g_{j}(0) \displaystyle\right\rvert.$$ Now, let us prove  that $j_0=1$. Let us compute in the coordinates $(x,y)$ the equation of the total transform of the polar curve.
 The jacobian matrix of $ \Phi \circ \pi$ is  $$\mathrm{Jac}( \Phi \circ \pi)(x,y) = \begin{pmatrix}
* & x^{q_{p} m_v(g)}(g_{1}(x)+2yg_{2}(x)+\ldots) \\
m_v(f) x^{m_v(f)-1} &0
\end{pmatrix}$$then $\mathrm{Det}(\mathrm{Jac}( \Phi \circ \pi)(x,y))=m_v(f) x^{q_{p} m_v(g)+m_v(f)-1}(g_{1}(x)+2yg_{2}(x)+\ldots)=0$ is the equation of the total transform of $\Pi_{\Phi}$. Since $g_{j_0}(0) \neq 0$ it follows that the equation of the  strict transform of the polar curve $\Pi_{\Phi}^*$ is    $$\sum_{j \geq 1 }  jy^{j-1} g_{j}(x) =0.$$
By hypothesis $p \notin \Pi_{\Phi}^*$ which means that $g_{1}(0) \neq 0$ and then $j_0 =1$. Thus
$$ H(0)=|(\alpha-\beta)g_{1}(0) + \sum_{j > 1}\left( \alpha^j - \beta^j \right) g_{j}(0) |.$$  If $\alpha$ and $\beta$ are distinct and in a small enough neighborhood of $0 \in \mathbb{C}$ we have $H(0) \neq 0$. With this, we can conclude that there exists a neighborhood $O_p$ of $p$ such that for every curvettes satisfying \eqref{cond}
$$\mathrm{d}(\gamma_1 \cap \{ u_2 = \epsilon \}, \gamma_2 \cap \{ u_2 =\epsilon\}) =\displaystyle\left\lvert \epsilon^{ \frac{q_p m_v(g)}{m_v(f)} } \displaystyle\right\rvert H(\epsilon)= \Theta( \epsilon^{\frac{m_v(g)q_p}{m_v(f)}} ).$$ Now we will see that the number $\frac{q_pm_v(g)}{m_v(f)}$ does not depend of the point $p$. 

Let us consider the holomorphic 2-form $\Omega=\pi^*(\mathrm{d}g \wedge \mathrm{d}f)$ on $X_{\pi}$. In the neighborhood of $p$, a direct computation shows that the holomorphic $2$-form $\Omega$ is given in the coordinates $(x,y)$ by: $$
\Omega=-m_v(f)x^{q_p m_v(g)+m_v(f)-1}\sum_{j \geq 1 }  jy^{j-1} g_{j}(x) \mathrm{d}x \wedge \mathrm{d}y.
$$ It follows that the order of vanishing of the holomorphic $2$-form $\Omega$  along the irreducible component $E_v$ is 
$$ \mathrm{Ord}_{E_v}(\pi^*(\mathrm{d}g \wedge \mathrm{d}f))=q_p m_v(g)+m_v(f)-1.$$ The number $\frac{q_p m_v(g)}{m_v(f)}$ is then independant of the point $p$ and is equal to

$$ \frac{q_p m_v(g)}{m_v(f)}=q_{g,v}^f:=\frac{\mathrm{Ord}_{E_v}(\pi^{*}(\mathrm{d}g \wedge \mathrm{d}f))-m_v(f)+1}{m_v(f)}.$$

 \end{proof}

 \subsection{Inner rates formula}\label{subsection4}

Let us recall the inner rates formula from \cite[Theorem 3.1]{yenni1}, which generalizes a previous result from \cite[Theorem 4.3]{BFP}. This formula will play a key role in the proof of Theorem \ref{thmM} mentioned in the introduction. In this subsection, we will provide a proof of this formula to ensure that the paper is as self-contained as possible. For those seeking more details, we encourage you to refer to the previously mentioned papers.

  \begin{thm}[{The inner rates formula}] \label{laplacien} Let $(X,0)$ be a complex surface germ with an isolated singularity and let $\pi :(X_{\pi},E) \longrightarrow (X,0) $ be a good resolution of $(X,0)$. Let $g,f:(X,0) \longrightarrow (\mathbb{C},0)$ be two holomorphic functions on $X$ such that the morphism $\Phi=(g,f): (X,0) \longrightarrow (\mathbb{C}^2,0)$ is finite. Let $M_{\pi}=(E_{v_i} \cdot E_{v_j})_{i,j \in \{1,2,\ldots,n\}}$ be the \textbf{intersection matrix} of the dual graph $\Gamma_{\pi}$, $a_{g,\pi}^f:=(m_{v_1}{(f)}q_{g, v_1}^f,\ldots,m_{v_n}(f)q_{g, v_n}^f)$, $K_{\pi} :=( \mathrm{val}_{\Gamma_{\pi}} (v_1) +2g_{v_1}-2,\ldots,\mathrm{val}_{\Gamma_{\pi}} (v_n) +2g_{v_n}-2)$, $F_{\pi}=(f^* \cdot E_{v_1},\ldots,f^* \cdot E_{v_n} )$ be \textbf{the $F$-vector } and 
$P_{\pi}=(\Pi_{\Phi}^* \cdot E_{v_1},\ldots,\Pi_{\Phi}^* \cdot E_{v_n})$ be \textbf{the polar vector or $\mathcal{P}$-vector}. Then we have:
$$M_{\pi}  .\underline{a_{g,\pi}^f}=\underline{K_{\pi}}+\underline{F_{\pi}}-\underline{P_{\pi}}.$$
Equivalently, for each irreducible component $E_v$ of $E$ we have the  following:
$$
 \left( \sum_{i \in V(\Gamma_{\pi})} m_{i}(f)q_{g,i}^f E_i  \right) \cdot E_{v}= \mathrm{val}_{\Gamma_{\pi}}(v)+f^* \cdot E_v-\Pi_{\Phi}^* \cdot E_v+2g_v-2.
$$
\end{thm}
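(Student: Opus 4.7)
The plan is to read the left-hand side as a combination of three intersection pairings on $X_\pi$, each controlled by a classical principle: the canonical class (via adjunction), the principal divisor of $f \circ \pi$ (via principality), and a purely combinatorial term (the valency). Setting $\alpha_i := \mathrm{Ord}_{E_i}(\pi^*(\mathrm{d}g \wedge \mathrm{d}f))$ and $\beta_i := m_i(f)$, the definition of $q_{g,i}^f$ gives $m_i(f) q_{g,i}^f = \alpha_i - \beta_i + 1$, so I would split
\[ \Big(\sum_i m_i(f) q_{g,i}^f E_i\Big) \cdot E_v = \Big(\sum_i \alpha_i E_i\Big) \cdot E_v - \Big(\sum_i \beta_i E_i\Big) \cdot E_v + \Big(\sum_i E_i\Big) \cdot E_v \]
and evaluate each piece separately.

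For the first sum I would argue that $\pi^*(\mathrm{d}g \wedge \mathrm{d}f)$, viewed as a meromorphic section of the canonical bundle $\omega_{X_\pi}$, has divisor $\sum_i \alpha_i E_i + \Pi_\Phi^*$. Indeed, the local Jacobian computation already carried out in the proof of Proposition \ref{inner-rate} simultaneously shows that the order of vanishing along $E_v$ is exactly $\alpha_v$ and that outside $E$ the zero locus is exactly $\Pi_\Phi^*$ with the multiplicities coming from the definition of $\Pi_\Phi$ as the zero scheme of $\mathrm{d}g \wedge \mathrm{d}f$. Combined with the adjunction formula $K_{X_\pi} \cdot E_v = 2g_v - 2 - E_v^2$, this gives $\sum_i \alpha_i E_i \cdot E_v = 2g_v - 2 - E_v^2 - \Pi_\Phi^* \cdot E_v$. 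For the second sum I would use that the total transform $(f \circ \pi) = \sum_i \beta_i E_i + f^*$ is a principal divisor on $X_\pi$, and $f\circ\pi$ restricts to a holomorphic, hence constant, function on the compact curve $E_v$; therefore $\sum_i \beta_i E_i \cdot E_v = -f^* \cdot E_v$. For the third sum I would separate the diagonal from the off-diagonal terms: $\sum_i E_i \cdot E_v = E_v^2 + \mathrm{val}_{\Gamma_\pi}(v)$.

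Plugging these three evaluations back into the split, the two $E_v^2$ contributions cancel and the remaining pieces reassemble into exactly $\mathrm{val}_{\Gamma_\pi}(v) + f^* \cdot E_v - \Pi_\Phi^* \cdot E_v + 2g_v - 2$, which is the component-wise identity; stacking these equalities over all $v \in V(\Gamma_\pi)$ yields the matrix form $M_\pi \cdot \underline{a_{g,\pi}^f} = \underline{K_\pi} + \underline{F_\pi} - \underline{P_\pi}$. The main obstacle I anticipate is the first step: carefully justifying that the non-exceptional part of the divisor of $\pi^*(\mathrm{d}g \wedge \mathrm{d}f)$ is exactly $\Pi_\Phi^*$ \emph{with its natural multiplicities} (i.e., read as a ramification divisor of $\Phi$) and not merely with a reduced structure. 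The explicit Jacobian formula appearing in the proof of Proposition \ref{inner-rate} makes this matching transparent and is the key technical input allowing the adjunction computation to produce the polar term $\Pi_\Phi^* \cdot E_v$ in the formula.
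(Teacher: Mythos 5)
Your proof is correct and follows essentially the same route as the paper: identify $K_{X_\pi}$ with the divisor of $\pi^*(\mathrm{d}g \wedge \mathrm{d}f)$, which is $\sum_i \alpha_i E_i + \Pi_\Phi^*$, apply adjunction to $E_v$, and trade $\sum_i \beta_i E_i \cdot E_v$ for $-f^* \cdot E_v$ via principality of $(f\circ\pi)$. The only difference is presentational: the paper writes adjunction as $(K_\Omega + E_v)\cdot E_v = 2g_v - 2$ and expands everything at once, whereas you split the left-hand side into three sums and evaluate each separately (using $K\cdot E_v = 2g_v - 2 - E_v^2$ and letting the $E_v^2$ cancel against the diagonal term of $\sum_i E_i \cdot E_v$), but these are the same computation rearranged.

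One small point of caution you should make explicit: the identification of the canonical divisor as $\sum_i \alpha_i E_i + \Pi_\Phi^*$ uses that $\pi$ is a biholomorphism off $E$, so that the zero divisor of $\pi^*(\mathrm{d}g \wedge \mathrm{d}f)$ away from $E$ is the strict transform of the zero divisor of $\mathrm{d}g \wedge \mathrm{d}f$ on $X \smallsetminus \{0\}$, which by definition is $\Pi_\Phi$ with its natural (non-reduced) scheme structure. You flag this as the anticipated obstacle and point to the Jacobian computation in the proof of Proposition \ref{inner-rate}; that reference does settle the $E_v$-order part, but the statement that the non-exceptional part is $\Pi_\Phi^*$ with correct multiplicities really just comes from $\pi$ being an isomorphism off $E$, which is how the paper handles it as well.
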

Let us introduce the required tool in order to prove Theorem \ref{laplacien}.
\begin{defi}[{See e.g \cite[Chapter 3. Subsection 6.3]{shafarevich}}]
Let $S$ be a smooth complex surface. The \textbf{canonical divisor} of $S$, denoted $K_S$, is the  divisor $ K_{\Omega}$ associated to any meromorphic $2$-form $\Omega$ defined on $S$. It is  well defined up to linear equivalence i.e.,  for any pair of $2$-forms $\Omega$ and $\Omega'$ of $S$ there exists a meromorphic function $h$ on $S$  such that $$ K_{\Omega}=(h)+K_{\Omega'}$$ where  $K_{\Omega}$ and $K_{\Omega'}$ are respectively the divisors associated to the $2$-forms $\Omega$ and $\Omega'$.
\end{defi}

\begin{thm}[{Adjunction formula, see e.g \cite[Chapter 4. Subsection 2.3]{shafarevich}}] \label{adj} Let $S$ be a complex surface and $C \subset S$ be  a compact Riemann surface embedded in $S$. Then 
$$ (K_S+C) \cdot C=2g_{C}-2, \ \ \  \text{where} \ g_{C}  \ \text{is the genus of } \ C.  $$

\end{thm}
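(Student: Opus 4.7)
The plan is to derive the adjunction formula from the normal bundle short exact sequence on $C$ together with standard identifications of line bundles attached to a smooth divisor. Since $C$ is a smooth compact Riemann surface embedded in the smooth complex surface $S$, the sequence of holomorphic tangent sheaves
$$ 0 \longrightarrow T_C \longrightarrow T_S|_C \longrightarrow N_{C/S} \longrightarrow 0 $$
is exact, where $N_{C/S}$ denotes the normal line bundle of $C$ in $S$. Taking determinants of this short exact sequence of locally free sheaves gives $\det(T_S|_C) \cong T_C \otimes N_{C/S}$. Dualizing and rewriting in terms of the canonical line bundles $K_S = \det T_S^{\vee}$ and $K_C = T_C^{\vee}$ yields the line-bundle identity
$$ K_C \cong K_S|_C \otimes N_{C/S}. $$

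Next I would identify the normal bundle with the restriction of $\mathcal{O}_S(C)$ to $C$. The key point is that for a smooth hypersurface $C$ in a smooth surface $S$, the conormal sheaf $\mathcal{I}_C/\mathcal{I}_C^{\,2}$ is canonically isomorphic to $\mathcal{O}_S(-C)|_C$; this follows by twisting the ideal-sheaf sequence $0 \to \mathcal{O}_S(-C) \to \mathcal{O}_S \to \mathcal{O}_C \to 0$ by $\mathcal{O}_S(C)$ and using that $C$ is locally cut out by a single nonvanishing section. Dualizing gives $N_{C/S} \cong \mathcal{O}_S(C)|_C$. Substituting into the previous identity produces
$$ K_C \cong \bigl(K_S \otimes \mathcal{O}_S(C)\bigr)\bigr|_C. $$

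The final step is to take degrees on both sides. The left-hand side has degree $2g_C - 2$ by the classical formula for the degree of the canonical divisor on a compact Riemann surface of genus $g_C$ (for instance via Riemann--Hurwitz applied to a nonconstant meromorphic function, or equivalently Riemann--Roch applied to the trivial line bundle). The right-hand side, by the very definition of intersection numbers on a smooth surface as degrees of restricted line bundles ($D \cdot C = \deg(\mathcal{O}_S(D)|_C)$), equals $(K_S + C) \cdot C$. Comparing the two gives the formula. The main obstacle is really just the careful identification $N_{C/S} \cong \mathcal{O}_S(C)|_C$, which is a local computation with the ideal sheaf that relies crucially on the smoothness of $C$ so that the conormal sheaf is a line bundle; everything else is a matter of taking determinants of a short exact sequence and computing degrees.
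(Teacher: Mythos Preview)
Your proof is correct and is the standard argument for the adjunction formula via the normal-bundle exact sequence. Note, however, that the paper does not supply its own proof of this statement: Theorem~\ref{adj} is quoted as a classical result with a reference to Shafarevich, and is used as a black box in the proof of the inner rates formula (Theorem~\ref{laplacien}). So there is no proof in the paper to compare against; your argument would serve perfectly well as a self-contained justification of the cited fact.
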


\begin{proof}[Proof of Theorem \ref{laplacien}]

Let us consider the holomorphic $2$-form $\Omega:=\pi^*(\mathrm{d}g \wedge \mathrm{d}f)$ defined on the smooth complex surface $X_{\pi}$. Since $\pi_{X_{\pi} \backslash E}:X_{\pi} \backslash E \longrightarrow X \backslash 0$ is an isomorphism, the vanishing locus of $\Omega$ is $E \cup \Pi_{\Phi}^*$. The associated canonical divisor of $X_{\pi}$ is given by
 
$$ K_{\Omega}= \sum_{v_i \in V(\Gamma_{\pi})} \mathrm{Ord}_{E_{v_i}}(\pi^*(\mathrm{d}g \wedge \mathrm{d}f))E_{v_i}+ \Pi_{\Phi}^*. $$ By definition of the inner rates   we can rewrite the divisor $K_{\Omega}$ as follows
\begin{equation}\label{eqdemerde}K_{\Omega}= \sum_{v_i \in V(\Gamma_{\pi})} (m_{v_i}(f)q_{g,v_i}^f + m_{v_i}(f) -1)E_{v_i}+ \Pi_{\Phi}^* .\end{equation}  We now apply the adjunction formula \ref{adj} to the compact Riemann surface  $E_v$  \begin{eqnarray}\label{adjunction}
(K_{\Omega}+E_v) \cdot E_v=2g_v-2.\end{eqnarray} Combining equations (\ref{eqdemerde}) and (\ref{adjunction}), we obtain:  \begin{eqnarray}\label{eq1996} \sum_{v_i \in V(\Gamma_{\pi})} q_{g,v_i}^{f}m_{v_i}(f) E_{v_i} \cdot E_v+ \sum_{v_i \in V(\Gamma_{\pi} )} m_{v_i}(f)E_{v_i}\cdot E_v -\sum_{v_i \in V(\Gamma_{\pi}), v_i \neq v }E_{v_i} \cdot E_v +\Pi_{\Phi}^* \cdot E_v=2g_v-2.\end{eqnarray} Recall that we denote by $(f \circ \pi)$ the principal divisor associated with the holomorphic function $f \circ \pi$.

 Finally, we have the following formula which comes from the fact that $(f \circ \pi)\cdot E_v=0$: $$\left( \sum_{v_i \in V(\Gamma_{\pi} )} m_{v_i}(f)E_{v_i}\cdot E_v \right)=\left(-f^* \cdot E_v\right).$$ Replacing this in Equation \ref{eq1996}, we get the desired equality: $$ \sum_{v_i \in V(\Gamma_{\pi})} q_{g,v_i}^{f}m_{v_i}(f) E_{v_i} \cdot E_v= \mathrm{val}_{\Gamma_{\pi}}(v) +f^* \cdot E_v-\Pi_{\Phi}^* \cdot E_v+2g_v-2.$$\end{proof}

  \section{Inner rates of primary ideals}\label{chapter4}

\subsection{Definition of the inner rates of an m-primary ideal}\label{section4.1}
Let $(X,0)$ be a complex surface germ with an isolated singularity. Let $I$ be an $\mathfrak{m}$-primary ideal of  the local ring $\mathcal{O}_{X,0}$ associated with $(X,0)$. This means that there exists a positive natural number $n$ such that  $\mathfrak{m}^n \subset I$, where $\mathfrak{m}$ is the maximal ideal of $\mathcal{O}_{X,0}$. The aim of this section is to study a notion of inner rates with respect to the ideal $I$: \begin{defi}\label{inner-rate-idealdefi}
	Let $\pi:(X_{\pi},E) \longrightarrow (X,0)$ be a good resolution of $(X,0)$ and let $E_v$ be an irreducible component of $E$. We set	$$m_v(I):=\mathrm{inf}\{m_v(h) \ | \  h \in I \},$$ and $$\nu_v(I):= \mathrm{inf} \{\mathrm{Ord}_{E_v}(\pi^{*}(\mathrm{d}h_1 \wedge \mathrm{d}h_2) ) \ | \  h_1, h_2 \in I \}. $$
We call the rational number $$ q_v^{I}:=\frac{\nu_{v}(I)-m_v(I)+1}{m_v(I)}$$ \textbf{the inner rate of the ideal  $I$ along $E_v$}.

\end{defi}
The motivation behind this definition lies in  the existence (Proposition \ref{inner-rate-ideal}) of a suitable  system of generators $(f_1,f_2,\dots,f_k)$ for the ideal $I$ (called \textit{precomplete}) which verifies the following condition. \textit{There exists a Zariski open set $V_{I}^{\pi}$ of $ \mathbb{P}(\mathbb{C}^k) \times \mathbb{P}(\mathbb{C}^k)$  such that for every good resolution $\pi:(X_{\pi},E) \longrightarrow (X,0)$, every irreducible component $E_v$ of $E$ and every element $(\alpha =[\alpha_1: \dots:\alpha_k], \beta=[\beta_1:\dots :\beta_k])$ of $V_I^{\pi}$   we have 
$$m_v(F_{\alpha})=m_v(I)=m_v(F_{\beta})$$ and $$\mathrm{Ord}_{E_v}(\pi^{*}(\mathrm{d}F_{\alpha} \wedge \mathrm{d}F_{\beta}))=\nu_v(I),$$ where $F_{\alpha}=\sum_{i=1}^{k} \alpha_i f_i$ and $F_{\beta}=\sum_{i=1}^{k}\beta_i f_i$}. 

It follows from Proposition \ref{inner-rate} that the inner rates  $$q_{F_{\alpha},v}^{F_{\beta}}=\frac{\mathrm{Ord}_{E_v}(\pi^{*}(\mathrm{d}F_{\alpha} \wedge \mathrm{d}F_{\beta} ))-m_v(F_{\beta})+1}{m_v(F_{\beta})}$$  associated to the morphism $\Phi_{\alpha,\beta}=(F_{\alpha},F_{\beta}):(X,0) \longrightarrow (\mathbb{C}^2,0)$ does not depend on the choice of $(\alpha,\beta)$ in $V_I^{\pi}$ and is equal to $q_v^{I}$. 
 The proof  of Proposition \ref{inner-rate-ideal} will need the classical notion of  blow-up of ideals which we recall now.

   \subsection{Preliminaries on the blow-up of an ideal}\label{section4.2}
    Let $F=(f_1,f_2,...,f_k)$ be a set of generators for the $\mathfrak{m}$-primary ideal $I$. Consider the map
 $$\begin{array}{rcl}
\nu_F:X \backslash \{0\} &\to& \mathbb{P}(\mathbb{C}^k) \\
p &\mapsto &[f_1(p):f_2(p):                                                                                                                                                             \dots:f_k(p)]
\end{array}. $$ The map  $\nu_F$ is well defined because the ideal $I$ is $\mathfrak{m}$-primary, thus its generators cannot vanish simultaneously outside of $0$.   Let $$X_F:=\overline{\mathrm{Graph}(\nu_F)}=\overline{\{(p,[f_1(p):\dots:f_k(p)] )\ | \ p \in X \backslash 0 \}} \subset X \times \mathbb{P}(\mathbb{C}^k)$$ and  consider the modification 

$$\begin{array}{rcl}
\mathrm{BL}_{F}:(X_F, E_F) &\to& (X,0),\\
(p,v) &\mapsto & p 
\end{array} $$
where $E_F=\mathrm{BL}_{F}^{-1}(0)$. The modification $\mathrm{BL}_{F}$ does not depend on the choice of the generators $F=(f_1,\dots,f_k)$ in the following sense:

\begin{prop}
	Let $G=(g_1,g_2,\dots ,g_s)$ be another set of generators for $I$. There exists a unique isomorphism $h:(X_G,E_G)\longrightarrow(X_F,E_F)$ such that the following diagram commutes:
$$\xymatrix{
    X_F  \ar[d]^{\mathrm{BL}_F}  & X_G \ar[l] \ar[ld]^{\mathrm{BL}_G} \\
     X
  }$$ 
  
\end{prop}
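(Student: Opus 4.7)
The plan is to construct $h$ explicitly from the linear algebra relating $F$ and $G$. Since both $F$ and $G$ generate the same ideal $I$, there exist holomorphic matrices $A=(a_{ji})\in\mathrm{Mat}_{s\times k}(\mathcal{O}_{X,0})$ and $B=(b_{ij})\in\mathrm{Mat}_{k\times s}(\mathcal{O}_{X,0})$ with
$$g_j=\sum_i a_{ji}f_i,\qquad f_i=\sum_j b_{ij}g_j.$$
Uniqueness is immediate: the restrictions $\mathrm{BL}_F\colon X_F\setminus E_F\to X\setminus\{0\}$ and $\mathrm{BL}_G\colon X_G\setminus E_G\to X\setminus\{0\}$ are biholomorphisms, so any morphism $h$ over $X$ must coincide with $\mathrm{BL}_F^{-1}\circ\mathrm{BL}_G$ on the dense open subset $X_G\setminus E_G$, hence is determined on all of $X_G$.

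For existence, I would use the explicit formula
$$h(p,[v_1:\cdots:v_s])=\Bigl(p,\Bigl[\sum_j b_{1j}(p)v_j:\cdots:\sum_j b_{kj}(p)v_j\Bigr]\Bigr),$$
which manifestly commutes with the projection to $X$. On the graph $\mathrm{Graph}(\nu_G)\subset X_G$ it sends $(p,[g_1(p):\cdots:g_s(p)])$ to $(p,[\sum_j b_{1j}(p)g_j(p):\cdots:\sum_j b_{kj}(p)g_j(p)])=(p,[f_1(p):\cdots:f_k(p)])\in\mathrm{Graph}(\nu_F)$, so $h$ agrees with the canonical identification off the exceptional divisors. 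Applying the same construction with $A$ in place of $B$ yields a candidate inverse $h'\colon X_F\to X_G$; the compositions $h\circ h'$ and $h'\circ h$ restrict to the identity on $X\setminus\{0\}$, hence equal the identity by density.

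The main obstacle is verifying that $h$ extends to a holomorphic morphism across $E_G$, i.e.\ that for every $(0,[v])\in E_G$ the image vector $(\sum_j b_{1j}(0)v_j,\dots,\sum_j b_{kj}(0)v_j)$ is non-zero. I would argue by compactness and norm comparison: pick a sequence $p_n\to 0$ in $X\setminus\{0\}$ with $[g_1(p_n):\cdots:g_s(p_n)]\to[v]$, and scalars $c_n$ with $c_ng_j(p_n)\to v_j$. The relation $f_i=\sum_j b_{ij}g_j$ gives $c_nf_i(p_n)\to\sum_j b_{ij}(0)v_j$. Because $A$ and $B$ are holomorphic at $0$, one has the two-sided bound $\max_i|f_i(p_n)|\asymp\max_j|g_j(p_n)|$; combined with $|c_n|\asymp\max_j|g_j(p_n)|^{-1}$ (from $\max_j|c_ng_j(p_n)|\to\max_j|v_j|>0$), this forces $\max_i|c_nf_i(p_n)|$ to stay bounded away from zero, so the limit vector is non-zero. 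Finally the limit point $(0,[\sum_j b_{1j}(0)v_j:\cdots:\sum_j b_{kj}(0)v_j])$ lies in $X_F$ because $X_F$ is closed in $X\times\mathbb{P}(\mathbb{C}^k)$ and contains the approximating points $(p_n,[f_1(p_n):\cdots:f_k(p_n)])$.
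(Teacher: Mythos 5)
The paper states this proposition without proof; it is a classical fact, and the natural "textbook" route is to deduce it from the universal property of the blow-up (Proposition~\ref{hironakablowupideal}, stated immediately afterwards, or equivalently via the $\mathrm{Proj}$ description), whereas your argument gives a direct, explicit construction of the isomorphism from the change-of-generators matrices. Your proof is correct. The uniqueness argument (any $h$ over $X$ agrees with $\mathrm{BL}_F^{-1}\circ\mathrm{BL}_G$ on the dense open set $X_G\setminus E_G$, hence everywhere by continuity into the Hausdorff target $X\times\mathbb{P}(\mathbb{C}^k)$) is clean. For existence, the key points are all present: the formula $h(p,[v])=(p,[\sum_j b_{ij}(p)v_j]_i)$ is projectively well-defined; it sends graph points to graph points off the exceptional locus, hence lands in the closed set $X_F$; and the sequence/norm-comparison argument correctly rules out that the image vector vanishes over a point of $E_G$, using the two-sided bound $\max_i|f_i|\asymp\max_j|g_j|$ near $0$ coming from the two matrix relations together with $\max_j|v_j|>0$. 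The inverse is obtained symmetrically from $A$, and the composites equal the identity by the same density argument. The trade-off between the two approaches: the universal-property route is shorter and conceptually cleaner but requires first establishing Proposition~\ref{hironakablowupideal} for $\mathrm{BL}_F$ independently of $F$, whereas your approach is self-contained, elementary, and actually exhibits the isomorphism concretely, at the cost of the slightly delicate non-vanishing estimate on $E_G$.
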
\begin{defi}\label{blowupoideal}
The map $\mathrm{BL}_F$ is denoted $\mathrm{BL}_I:(X_I,E_I) \longrightarrow (X,0)$ and is called the \textbf{blow-up of the ideal $I$}.
	
\end{defi}

\noindent Let us recall the universal property of the blow-up of an ideal.

\begin{prop}[{\cite{HIRONAK64a} or see \cite[Proposition 10.2.2]{phamteissier}}]\label{hironakablowupideal}
	The blow-up $\mathrm{BL}_{I}:(X_{I},E_I)\longrightarrow (X,0)$ of the ideal  $I$ is the unique modification such that:
	\begin{enumerate}
		\item The ideal sheaf $I.\mathcal{O}_{X_{I}}:=\mathrm{BL}_{I}^{*}(I) \otimes \mathcal{O}_{X_{I}}$ is locally principal. \label{locallyfree}\label{hironakasheaf}
		\item For every modification $\phi:T \longrightarrow X$ such that $I.\mathcal{O}_{T}$ is locally principal, there is a unique factorization $r:T \longrightarrow X_{I}$ such that $\phi= \mathrm{BL}_{I} \circ r$.
		 \end{enumerate}
\end{prop}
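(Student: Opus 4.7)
The plan is to verify the two stated properties directly for the specific modification $\mathrm{BL}_I$ constructed in the text, from which the uniqueness of $\mathrm{BL}_I$ as a modification satisfying (1) and (2) will follow by a standard universal property argument. Throughout, I would fix a system of generators $F = (f_1, \ldots, f_k)$ of $I$, so that $X_I = \overline{\mathrm{Graph}(\nu_F)} \subset X \times \mathbb{P}(\mathbb{C}^k)$, and exploit the description of $X_I$ on the standard charts of the projective factor.

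For property (1), I would work locally on the chart $U_i = \{[z_1:\cdots:z_k] : z_i \neq 0\} \subset \mathbb{P}(\mathbb{C}^k)$, using the affine coordinates $t_j = z_j/z_i$ (with $t_i = 1$). On $X \setminus \{0\}$ the graph of $\nu_F$ satisfies the identities $f_j = t_j f_i$, and since these are closed conditions they persist on $X_I \cap (X \times U_i)$ after pullback by $\mathrm{BL}_I$. Consequently $\mathrm{BL}_I^*(f_j) = t_j \cdot \mathrm{BL}_I^*(f_i)$ on this chart, so the ideal sheaf $I \cdot \mathcal{O}_{X_I}$ is generated by the single function $\mathrm{BL}_I^*(f_i)$ there, proving local principality.

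For property (2), given a modification $\phi: T \longrightarrow X$ with $I \cdot \mathcal{O}_T$ locally principal, I would construct $r: T \longrightarrow X_I$ locally as follows. Near a point $q \in T$, pick a local generator $g$ of $I \cdot \mathcal{O}_{T,q}$ and write $\phi^*(f_j) = u_j g$ for some $u_j \in \mathcal{O}_{T,q}$. At least one $u_i$ must be a unit, since otherwise all $u_j$ would lie in the maximal ideal and $(g) = (u_1 g, \ldots, u_k g)$ would contradict Nakayama's lemma. The formula $r(q') := \bigl(\phi(q'),\, [u_1(q') : \cdots : u_k(q')]\bigr)$ therefore defines a holomorphic map into $X \times \mathbb{P}(\mathbb{C}^k)$; two choices of $g$ differ by a unit that uniformly rescales the $u_j$'s and hence projects to the same point of $\mathbb{P}(\mathbb{C}^k)$, so the local definitions patch into a global map $r: T \longrightarrow X \times \mathbb{P}(\mathbb{C}^k)$. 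On the dense open subset where $\phi$ is an isomorphism and the $f_j$'s do not all vanish, $r$ coincides with $(\phi, \nu_F \circ \phi)$, so $r$ takes values in $\mathrm{Graph}(\nu_F)$ there; by continuity and the closedness of $X_I$, $r(T) \subseteq X_I$ everywhere.

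Uniqueness of $r$ is then immediate: any two factorizations must agree on the dense open set where $\phi$ is an isomorphism (since $\mathrm{BL}_I$ is also an isomorphism there), and hence coincide on all of $T$ by Hausdorffness. The uniqueness of $\mathrm{BL}_I$ itself as a modification satisfying (1) and (2) is a formal consequence: applying (2) to two such candidates produces mutually inverse morphisms. The main subtlety, in my view, is not any single step but the careful bookkeeping needed to show that the locally constructed $r$ is independent of the choice of the principal generator $g$ and compatible across charts; everything ultimately rests on the observation that the ambiguity in $g$ is a unit scaling that disappears upon passage to projective coordinates.
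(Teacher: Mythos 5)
The paper does not give its own proof of this proposition; it cites Hironaka and Pham--Teissier, so there is no in-text argument to compare against. Your proof is a correct and complete rendition of the standard argument for the universal property of blowing up an ideal, and it is essentially what one finds in those references: verify local principality on the standard charts $U_i$ via the relations $f_j = t_j f_i$ cut out by the graph closure, then, for the universal property, build the factorization $r$ locally from a principal generator $g$ and the quotients $u_j = \phi^*(f_j)/g$, checking independence of the choice of $g$ and patching by the density argument.

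One small refinement you may wish to make explicit: your Nakayama step (``at least one $u_i$ must be a unit, otherwise $(g) = (u_1 g, \ldots, u_k g) \subseteq \mathfrak{m}_q (g)$ forces $(g) = 0$'') implicitly uses that $g \neq 0$. This is guaranteed because $\phi$ is a modification, hence generically an isomorphism on every component of $T$, and $I$ is $\mathfrak{m}$-primary, so the $\phi^*(f_j)$ cannot vanish identically on any component and the stalk $I\cdot\mathcal{O}_{T,q}$ is nonzero. With that sentence added, the argument is airtight.
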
 

\noindent  Throughout this paper  we will adopt the following notations.

\begin{nota}

Let  $F=(f_1, \dots , f_k)$ be a system of generators of the ideal $I$ and $\alpha:=[\alpha_1,\alpha_2,\dots,\alpha_k] \in \mathbb{P}(\mathbb{C}^k)$. We denote by $F_{\alpha}$ the function $ \sum_{i=1}^k \alpha_i f_i$. Let $\Omega_F$ be the Zariski open set of  $\mathbb{P}(\mathbb{C}^k)$ such that  $F_{\alpha}^{-1}(0)$ is a curve germ. We denote by $L_F$ the family of curve germs $\{F_{\alpha}^{-1}(0) \}_{\alpha \in \Omega_F}.$
\end{nota}

Let us recall the notion of basepoints of a family of curves with respect to a modification.

\begin{defi}\label{pointbasedefi1996}
Let $C=\{{C_\alpha}\}_{\alpha \in \Omega}$ be a family of curves on $(X,0)$ parametrized by an open Zariski set $\Omega$ of some projective space. Let $\sigma:(Y,Z) \longrightarrow (X,0)$ be a modification. We say that a point $p$ in $Z$ is a \textbf{basepoint} for the family $C$ if $p \in C_{\alpha}^*$ for every $\alpha \in \Omega'$, where $\Omega'$ is an open Zariski set of $\Omega$ and $C_{\alpha}^*$ is the strict transform of $C_{\alpha}$ by $\sigma$.
\end{defi}

\noindent In the following proposition we give an equivalent property to Point \ref{locallyfree} of Proposition \ref{hironakablowupideal} in terms of the basepoints of a family of curves.
\begin{prop}\label{basepointofidealvssheaf}
	Let  $\sigma:(Y,Z)\longrightarrow (X,0)$ be a modification and $I$ be a $\mathfrak{m}$-primary ideal of $\mathcal{O}_{X,0}$. Let $F=(f_1,\dots,f_k)$ be a system of generators for $I$. Then the following assertions  are equivalent
	\begin{enumerate}
		\item The ideal sheaf $I.\mathcal{O}_Y:=\sigma^*(I) \otimes \mathcal{O}_Y$ is locally principal. \label{locallyprincipal}
		\item The modification $\sigma$ has no basepoint for the family $L_F$.\label{nobasepoint}	\end{enumerate}
\end{prop}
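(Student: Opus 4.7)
The plan is to verify the equivalence pointwise in $Y$. Since $I$ is $\mathfrak{m}$-primary and $\sigma$ is an isomorphism away from $Z$, the sheaf $\sigma^{*}(I) \cdot \mathcal{O}_{Y}$ is the unit ideal at every $p \notin Z$ and no $C_\alpha^{*}$ passes through such $p$ generically, so both conditions are automatic off $Z$. I therefore fix $p \in Z$ and work in $\mathcal{O}_{Y,p}$; I may assume $Y$ is smooth (the general case following by passing to a resolution of $Y$), so $\mathcal{O}_{Y,p}$ is a regular local ring of dimension at most $2$, hence a UFD. Set $g := \gcd(f_1 \circ \sigma, \ldots, f_k \circ \sigma)$ and write $f_i \circ \sigma = g \cdot h_i$ with $\gcd(h_1, \ldots, h_k) = 1$. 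Because the $f_i$ have no common zero outside $0$ (as $I$ is $\mathfrak{m}$-primary), the divisor of $g$ is supported on $Z$ near $p$. A direct computation shows that the ideal $\sigma^{*}(I) \cdot \mathcal{O}_{Y,p}$ is principal if and only if it equals $(g)$ (any generator $F = \sum a_i f_i\circ\sigma = gH$ with $H := \sum a_i h_i$ forces $h_i = b_i H$, hence $H \mid \gcd(h_i) = 1$), which in turn happens if and only if $(h_1,\ldots,h_k) = \mathcal{O}_{Y,p}$, equivalently, if and only if some $h_i$ is a unit at $p$.

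For the basepoint side, let $H_\alpha := \sum_i \alpha_i h_i$, so that $F_\alpha \circ \sigma = g \cdot H_\alpha$. The key observation is that, for generic $\alpha \in \mathbb{P}(\mathbb{C}^k)$, $H_\alpha$ vanishes identically on no component of $Z$ through $p$: otherwise there would exist a component $E_j$ such that $H_\alpha|_{E_j} \equiv 0$ for every $\alpha \in \mathbb{P}(\mathbb{C}^k)$, which would force every $h_i$ to vanish on $E_j$ and contradict $\gcd(h_i) = 1$. Therefore, in a neighborhood of $p$ and for $\alpha$ in a Zariski-dense subset of $\Omega_F$, the strict transform $C_\alpha^{*}$ coincides (as a set) with $\{H_\alpha = 0\}$, so $p \in C_\alpha^{*}$ if and only if $H_\alpha(p) = 0$. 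A direct linear-algebra argument then yields that $\{\alpha : H_\alpha(p) = 0\}$ is all of $\mathbb{P}(\mathbb{C}^k)$ exactly when every $h_i(p) = 0$, and is otherwise a proper projective subspace.

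Combining the two halves, $p$ is a basepoint of $L_F$ if and only if every $h_i(p) = 0$, which is precisely the failure of local principality of $\sigma^{*}(I) \cdot \mathcal{O}_{Y,p}$ at $p$; this yields both implications simultaneously. The main subtle step is the generic identification $C_\alpha^{*} = \{H_\alpha = 0\}$ near $p$; once it is secured, the remainder reduces to elementary commutative algebra in the UFD $\mathcal{O}_{Y,p}$, together with the dimension argument that an ideal whose generators have trivial $\gcd$ in a two-dimensional regular local ring is either the unit ideal or supported solely at the maximal ideal.
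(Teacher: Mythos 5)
Your argument is correct when $Y$ is smooth, and it takes a genuinely different route from the paper's. You factor out the gcd $g$ of the $f_i\circ\sigma$ in the UFD $\mathcal{O}_{Y,p}$, write $f_i\circ\sigma = g\,h_i$ with $\gcd(h_i)=1$, and reduce \emph{both} conditions at once to the single criterion ``all $h_i(p)=0$'': local principality is equivalent to $(g)=\sigma^*(I)\cdot\mathcal{O}_{Y,p}$, i.e.\ to $(h_1,\dots,h_k)=\mathcal{O}_{Y,p}$, while $p$ being a basepoint is equivalent, via $C_\alpha^* = \{H_\alpha=0\}$ near $p$ for generic $\alpha$, to the linear condition $\sum\alpha_i h_i(p)=0$ holding identically in $\alpha$. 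The paper, by contrast, proves the two implications separately: for $(1)\Rightarrow(2)$ it shows a principal generator $h\circ\sigma$ of $(I.\mathcal{O}_Y)_p$ must vanish only on $Z$ (deriving a contradiction from $\mathfrak{m}$-primarity if its strict transform passed through $p$), and for $(2)\Rightarrow(1)$ it extracts the maximal power $s_p^m$ of a local defining function $s_p$ of $(Z,p)$ dividing all the $f_i\circ\sigma$. When $(Z,p)$ is irreducible, your $g$ is an associate of $s_p^m$, so the two arguments coincide there; your version handles reducible $(Z,p)$ uniformly, which the paper only sketches.

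One genuine gap: your parenthetical claim that ``the general case follows by passing to a resolution of $Y$'' is not justified. If $\rho:Y'\to Y$ is a resolution, then $I\cdot\mathcal{O}_{Y'}$ can become locally principal even when $I\cdot\mathcal{O}_Y$ is not (that is exactly what principalization does), so condition $(1)$ does not transport back from $Y'$ to $Y$; similarly, a basepoint on $Y$ need not lift to a \emph{single} basepoint on $Y'$ for generic members of the pencil. Note, however, that the paper's proof of $(2)\Rightarrow(1)$ silently makes an analogous regularity assumption by requiring the vanishing ideal of $(Z,p)$ to be principal, which is automatic for $Y$ smooth but can fail for $Y$ normal and singular; and in the paper's applications only the implication $(1)\Rightarrow(2)$, which needs no regularity, is ever used with possibly singular $Y$. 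So the gap is harmless in context, but you should either state the smoothness hypothesis up front or replace the false reduction by the observation that the statement will only be invoked for smooth $Y$ (or for the direction that does not need it).
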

\begin{proof} \underline{We first prove that \ref{locallyprincipal} implies \ref{nobasepoint}}

  Let $p$ be a point of $Z$ and assume that $(I.\mathcal{O}_Y)_p$ is principal i.e, there exists a section $s_p=h \circ \sigma$ with $h \in I$ which  generates the ideal $(I.\mathcal{O}_Y)_p$. We prove that the point $p$ cannot be a basepoint for the family $L_F$. Let $h^*$ be the strict transform of $h^{-1}(0)$ by $\sigma$ and let us suppose it passes through the point $p$. It follows that every curve $F_{\alpha}^*, \ \alpha \in \mathbb{P}(\mathbb{C}^k)$, where $F_{\alpha}^{-1}(0) \in L_F$, contains the curve $h^*$ because $F_{\alpha} \circ \sigma = a_{\alpha} s_p, \ a_{\alpha} \in \mathcal{O}_{Y,p}$. It implies that $h^{-1}(0) \subset F_{\alpha}^{-1}(0), \ \forall \alpha \in \mathbb{P}(\mathbb{C}^k)$ and this is absurd because it would mean that the ideal $I$ is generated by $h$ and it contradicts the fact that $I$ is $\mathfrak{m}$-primary. It follows that the element $s_p$ vanishes only on $Z$. Since $F=(f_1,\dots,f_k)$ is a system of generators for $I$ there exists $i_p:=1,\dots,k$ such that $f_{i_p} \circ \sigma$ vanishes only on $Z$ in a small enough neighborhood of $p$. Knowing this, we can conclude the existence of a Zariski open set $O_p$ of $\mathbb{P}(\mathbb{C}^k)$ such that the functions $\alpha_{i_p}f_{i_p} \circ \sigma+ \sum_{i=1, i \neq i_p}^k \alpha_i f_i \circ \sigma$ do not vanish outside of $Z$ when restricted to a small enough neighborhood of $p$ for all $[\alpha_1,\dots,\alpha_{i_p},\dots,\alpha_k] \in O_p$. It means that $p$ is indeed not a basepoint for the family $L_F$.

\underline{Let us prove that \ref{nobasepoint} implies \ref{locallyprincipal}}.

 Let $p$ be a point of $Z$ such that the germ $(Z,p)$ is irreducible. By assumption, there exists a Zariski open set $O_p$ of $\mathbb{P}(\mathbb{C}^k)$ such that $F_{\alpha}^*$ does not pass through $p$ for all $\alpha$ in $O_p$. It follows that $F_{\alpha} \circ \sigma$ does not vanish outside of $Z$ when restricted to a small enough neighborhood of $p$. In particular, the vanishing set of the ideal  $(I.\mathcal{O}_{Y})_p$ is the germ of curve $(Z,p)$. Let $s_p$ be an element of $\mathcal{O}_{Y,p}$ which generates the vanishing ideal $I_{(Z,p)}$ of the germ $(Z,p)$.	Let $m$ be the maximal integer such that $s_p^m$ divides $f_i \circ \sigma \in (I. \mathcal{O}_Y)_p$ for all $i:=1,\dots,k$. Let $U_p$ be the Zariski open set of $\mathbb{P}(\mathbb{C}^k)$ such that $F_{\alpha} \circ \sigma=s_p^m h_{\alpha}, \  h_{\alpha} \in \mathcal{O}_{Y,p}$ and $s_p$ does not divide $h_{\alpha}$ for all $\alpha \in U_p$.  Then, for all $\alpha \in O_p \cap U_p $ we have $F_{\alpha} \circ \sigma=s_p^m u_{\alpha}$ where $u_{\alpha}$ is a unit of $\mathcal{O}_{Y,p}$ because $F_{\alpha} \circ \sigma$ do not vanish outside of $Z$. It follows that $s_p^m$ is an element of $(I.\mathcal{O}_Y)_p$ and thus generates it. Indeed, let $g \in (I.\mathcal{O}_Y)_p $	 then $g= \sum_{i+1}^k g_i f_i \circ \sigma, \ g_i \in \mathcal{O}_{Y,p}$ and $s_p^m$ divides every element $f_i \circ \sigma$.	Similar arguments work if $p$ is a point such that $(Z,p)$ is not irreducible.
		
		 %
		
		
		\end{proof}

\subsection{Precomplete systems of generators}\label{section4.3}


The aim of this subsection is to introduce the notion of precomplete  system of generators of an ideal and to prove Proposition \ref{inner-rate-ideal} announced in Subsection \ref{section4.1}. The proof needs several preliminary lemmas
\begin{lemm}\label{mI}
Let $F=(f_1,f_2,\dots ,f_k)$ be a set of generators for $I$ and  $\pi:(X_{\pi},E)\longrightarrow (X,0)$  be a good resolution of $(X,0)$. There exists a Zariski open set $U_{I}^{\pi}$ of $\mathbb{P}(\mathbb{C}^k)$ such that for any irreducible component $E_v$ of $E$ and any element $\alpha$ of $U_{I}^{\pi}$ we have

$$ m_v(F_{\alpha})=m_v(I).$$

\end{lemm}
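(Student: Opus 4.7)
The plan is to show that for each irreducible component $E_v$ of $E$, the \emph{bad locus} $B_v := \{\alpha \in \mathbb{P}(\mathbb{C}^k) : m_v(F_\alpha) > m_v(I)\}$ is a proper Zariski-closed subset of $\mathbb{P}(\mathbb{C}^k)$. Since $E$ has only finitely many irreducible components, and since the reverse inequality $m_v(F_\alpha) \geq m_v(I)$ holds for every $\alpha$ (simply because $F_\alpha \in I$), the non-empty Zariski open set $U_I^\pi := \mathbb{P}(\mathbb{C}^k) \setminus \bigcup_v B_v$ will satisfy the required property.

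First I would establish the identity $m_v(I) = \min_i m_v(f_i)$. The inequality $m_v(I) \leq m_v(f_i)$ is immediate from $f_i \in I$. Conversely, any $h \in I$ can be written $h = \sum_i a_i f_i$ with $a_i \in \mathcal{O}_{X,0}$, and the valuative properties of $m_v$ give $m_v(h) \geq \min_i(m_v(a_i) + m_v(f_i)) \geq \min_i m_v(f_i)$. I set $m := m_v(I)$ and $S := \{i : m_v(f_i) = m\}$; this is a non-empty subset of $\{1, \dots, k\}$.

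Next I would argue locally. Pick a smooth point $p$ of $E_v$ that lies on no other component of $E$, and choose local coordinates $(x,y)$ at $p$ with $E_v = \{x=0\}$. For each $i$, write $f_i \circ \pi = x^{m_v(f_i)} g_i(x,y)$ with $g_i(0,y) \not\equiv 0$ in $\mathbb{C}\{y\}$. A direct expansion yields
$$F_\alpha \circ \pi = x^m \Bigl( \sum_{i \in S} \alpha_i g_i(0, y) + x \cdot R(x, y; \alpha) \Bigr),$$
for some $R \in \mathbb{C}\{x,y\}[\alpha]$, so $m_v(F_\alpha) > m$ if and only if $\sum_{i \in S} \alpha_i g_i(0, y)$ vanishes identically in $\mathbb{C}\{y\}$. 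Equating the coefficients of the powers of $y$ converts this into a homogeneous linear system in the variables $(\alpha_i)_{i \in S}$, whose solution set is a linear subspace of $\mathbb{C}^k$; its projectivization $B_v$ is therefore Zariski closed. To see that $B_v$ is \emph{proper}, note that for any $i \in S$ the coordinate vector $e_i = [0:\cdots:1:\cdots:0]$ gives $F_{e_i} = f_i$ with $m_v(f_i) = m$, so $e_i \notin B_v$.

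Intersecting over the finitely many irreducible components of $E$ yields the desired non-empty Zariski open set $U_I^\pi$. The only subtlety is in the third paragraph, where one must verify that the leading coefficient $\sum_{i \in S} \alpha_i g_i(0, y)$ genuinely detects the jump $m_v(F_\alpha) > m$; this uses only the independence of the vanishing order $m_v$ from the choice of local coordinates at a generic smooth point of $E_v$, so no deeper obstacle is expected.
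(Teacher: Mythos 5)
Your proposal is correct and essentially mirrors the paper's proof: both establish $m_v(I)=\min_i m_v(f_i)$ (you do it abstractly via the valuative property, the paper via the local expansion of $f\circ\pi$), then expand $F_\alpha\circ\pi$ near a generic smooth point of $E_v$ to see that the leading coefficient in $x$ is a linear form in $\alpha$ that is nonzero for $\alpha$ in a nonempty Zariski open set, and finally intersect over the finitely many components $E_v$. Your identification of the bad locus $B_v$ as a projective linear subspace and the observation that $e_i\notin B_v$ for $i\in S$ is a slightly more explicit packaging of the same argument.
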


\begin{proof}

Let $p$ be a smooth  point of $E_v$ and $(x,y)$ be a local system of coordinates of $X_{\pi}$ centered at $p$ such that $E_v$ has  local  equation $x=0$.  In this setting we have   $(f_i \circ \pi)(x,y)=x^{m_v(f_i)}\phi_i(x,y), \ 1 \leq i \leq k $, where $\phi_i$ is an element of $\mathbb{C}\{x,y\}$ such that $\phi_i(0,y) \neq 0$. For every  $\alpha$ in $\mathbb{P}(\mathbb{C}^k)$ we have
 $$ (F_{\alpha}\circ \pi) (x,y)=\sum_{i=1}^k \alpha_i x^{m_v(f_i)}\phi_i(x,y). $$
 Denote by $i_0$ the element of $\{1,\dots,k\}$ such that   $m_v(f_{i_0}):=\mathrm{min}\{m_v(f_i) \ | \ 1 \leq i \leq k \},$ then
 
  $$ (F_{\alpha}\circ \pi) (x,y)=x^{m_{v}(f_{i_0})} \left(\alpha_{i_{0}}\phi_{i_{0}}(x,y) + \sum_{i \neq i_{0}} \alpha_i x^{m_v(f_i)-m_{v}(f_{i_0}) }\phi_i(x,y) \right). $$ It follows that there exists a Zariski open set $U_{I}^{\pi}$ of $\mathbb{P}(\mathbb{C}^k)$ such that
$$ m_v(F_{\alpha})=m_v(f_{i_{0}}) , \forall \alpha \in U_{I}^{\pi}. $$
Now, let $f$ be an element of $I$ such that $m_v(f)=m_v(I)$. Since $F=(f_1,\dots,f_k)$ is a set of generators of $I$ then there exists $g_1,g_2, \dots ,g_k$ elements of $\mathcal{O}_{X,0}$ such that $$f=\sum_{i=1}^k g_i f_i.$$ In the local coordinates $(x,y)$ we have
$$ (f \circ \pi)(x,y)=x^{m_v(f_{i_0})} \left((g_{i_0}\circ \pi)(x,y) \phi_{i_0}(x,y)+ \sum_{i \neq i_{0}}g_i\circ \pi(x,y)x^{{m_v(f_{i})-m_v(f_{i_0})}}\phi_i(x,y) \right).$$It is then obvious that $m_v(f) = m_v(f_{i_0})$ and it implies that $m_v(F_{\alpha})=m_v(I)$ for every $\alpha$ in $U_I^{\pi}$. \end{proof}

Let $\Omega_X^2$ be the sheaf of $\mathcal{O}_X$-modules of holomorphic $2$-forms on $X$. Let $\Omega_I^2$ be the submodule of $\Omega_{X,0}^2$ generated by the set $\{\mathrm{d}g \wedge \mathrm{d}f \ | \ g,f \in I\}$. The following definition seems to be new:\begin{defi}\label{precompletegeneratorsystem}
	A system of generators $F=(f_1,\dots,f_k)$ for $I$ is \textbf{precomplete} if the set $$\{\mathrm{d}f_i \wedge \mathrm{d}f_j \ | \ i,j \in \{1,\dots,k \} \}$$ generates $\Omega_I^2$.
	\end{defi}
	 There exists a precomplete system of generators for $I$ because  the  $\mathcal{O}_{X,0}$-module $\Omega_{X,0}^2$ is finitely generated and noetherian, so the submodule $\Omega_I^2$ is finitely generated. In the following example we see that a system of generators of $I$ is not necessarily precomplete.
	\begin{exam}\label{precompleteexample}
		Consider the $\mathfrak{m}$-primary ideal $I=(x^2,y^2)$ of $\mathcal{O}_{\mathbb{C}^2,0}$. The set of generators $(x^2,y^2)$ is not precomplete. Indeed, let $g_1,g_2,h_1,h_2$ be arbitrary  elements of $\mathcal{O}_{\mathbb{C}^2,0}$ and consider the elements $g=x^2 g_1+y^2 g_2$ and $h=x^2 h_1+y^2 h_2$ of $I$. We have
		\begin{equation}\label{gbesoin} \mathrm{d}g \wedge \mathrm{d}h=\left(x^3(*)+xy(*)+y^3(*)\right)\mathrm{d}x \wedge \mathrm{d}y.\end{equation} In particular, take $g_1=\frac{1}{2}, h_1=y$ and $g_2=h_2=0$ then the element $\mathrm{d}g \wedge \mathrm{d}h =x^3 \mathrm{d}x \wedge \mathrm{d}y$  cannot be generated by  $\mathrm{d}(x^2) \wedge \mathrm{d}(y^2)=4xy \mathrm{d}x \wedge \mathrm{d}y$. On the other hand, the system of generators $(x^2,y^2,xy^2,x^2y)$ is  precomplete  for the ideal $I$. Indeed, by Equation (\ref{gbesoin}) the holomorphic $2$-forms
		$$\mathrm{d}(x^2) \wedge \mathrm{d}(x^2y)= 2x^3 \mathrm{d}x \wedge \mathrm{d}y, \ \ \ \ \mathrm{d}(x^2) \wedge \mathrm{d}(y^2)= 4xy \mathrm{d}x \wedge \mathrm{d}y,$$  $$ \mathrm{d}(xy^2) \wedge \mathrm{d}(y^2)= 2y^3 \mathrm{d}x \wedge \mathrm{d}y $$
	do generate the $\mathcal{O}_{\mathbb{C}^2,0}$-module $\Omega_I^{2}$.
		
		\end{exam}
	\begin{lemm}\label{nu}

Let  $F=(f_1,f_2,\dots ,f_k)$ be a precomplete system of generators of $I$ and  $\pi:(X_{\pi},E)\longrightarrow (X,0)$ be a good resolution of $(X,0)$. There exists a Zariski open set $W_{I}^{\pi}$ of $\mathbb{P}(\mathbb{C}^k) \times \mathbb{P}(\mathbb{C}^k)$ such that for any irreducible component $E_v$ of $E$ and any element $(\alpha, \beta)$ of $W_{I}^{\pi}$ we have
$$ \mathrm{Ord}_{E_v}(\pi^*(\mathrm{d}F_{\alpha} \wedge \mathrm{d}F_{\beta}))=\nu_v(I).$$

\end{lemm}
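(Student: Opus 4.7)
The plan is to mimic the strategy of Lemma \ref{mI}, with the precompleteness hypothesis playing the same role as the ``generating set'' hypothesis did there: it will let me replace the infimum in the definition of $\nu_v(I)$ by a minimum over the finite family $\{(f_i,f_j)\}$.

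First I would use precompleteness to prove the key identity
$$ \nu_v(I) \;=\; \min_{1 \le i,j \le k} \mathrm{Ord}_{E_v}\bigl(\pi^{*}(\mathrm{d}f_i \wedge \mathrm{d}f_j)\bigr).$$
For any $h_1, h_2 \in I$ the form $\mathrm{d}h_1 \wedge \mathrm{d}h_2$ lies in $\Omega_I^{2}$, so by Definition \ref{precompletegeneratorsystem} it can be written as $\sum_{i,j} g_{ij}\, \mathrm{d}f_i \wedge \mathrm{d}f_j$ with $g_{ij} \in \mathcal{O}_{X,0}$; pulling back by $\pi$ and taking orders along $E_v$ gives the inequality $\geq$, and the reverse inequality is clear since each $(f_i,f_j)$ is an admissible pair. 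Example \ref{precompleteexample} warns that this step genuinely needs precompleteness: without it, some $\mathrm{d}h_1 \wedge \mathrm{d}h_2$ could have strictly smaller order than every $\mathrm{d}f_i \wedge \mathrm{d}f_j$.

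Next I would fix an irreducible component $E_v$ and a smooth point $p \in E_v$ with local coordinates $(x,y)$ centred at $p$ such that $E_v = \{x=0\}$, write $\pi^{*}(\mathrm{d}f_i \wedge \mathrm{d}f_j) = A_{ij}(x,y)\, \mathrm{d}x \wedge \mathrm{d}y$, and set $N := \nu_v(I) = \min_{i,j}\mathrm{Ord}_{E_v}(A_{ij})$. Splitting the index set into $S_v = \{(i,j) : \mathrm{Ord}_{E_v}(A_{ij}) = N\}$ and its complement, I factor $A_{ij}(x,y) = x^{N}B_{ij}(x,y)$ with $B_{ij}(0,y) \not\equiv 0$ when $(i,j) \in S_v$, while $A_{ij}$ is divisible by $x^{N+1}$ otherwise. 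Bilinearity of the wedge product together with linearity of $\pi^{*}$ then yields
$$ \pi^{*}(\mathrm{d}F_{\alpha} \wedge \mathrm{d}F_{\beta}) = x^{N}\Bigl(\sum_{(i,j)\in S_v} \alpha_i \beta_j B_{ij}(x,y) + x\, R_v(x,y,\alpha,\beta)\Bigr)\, \mathrm{d}x \wedge \mathrm{d}y,$$
so the order of $\pi^{*}(\mathrm{d}F_{\alpha} \wedge \mathrm{d}F_{\beta})$ along $E_v$ equals $N$ exactly when the series $P_v(\alpha,\beta)(y) := \sum_{(i,j)\in S_v} \alpha_i \beta_j B_{ij}(0,y)$ is not identically zero in $y$.

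Finally, the non-vanishing of $P_v(\alpha,\beta)$ is a Zariski open condition on $(\alpha,\beta) \in \mathbb{P}(\mathbb{C}^k)\times\mathbb{P}(\mathbb{C}^k)$, cut out by the non-vanishing of the coefficients (in $y$) of $P_v$, which are bihomogeneous polynomials in $(\alpha,\beta)$. This defines a \emph{non-empty} open subset $W_v$: for any $(i_0,j_0) \in S_v$, taking $\alpha = e_{i_0}$ and $\beta = e_{j_0}$ gives $P_v = B_{i_0 j_0}(0,y) \not\equiv 0$ by the very definition of $S_v$. Since $E$ has only finitely many irreducible components, I would set $W_{I}^{\pi} := \bigcap_v W_v$, a non-empty Zariski open subset of $\mathbb{P}(\mathbb{C}^k)\times\mathbb{P}(\mathbb{C}^k)$ on which the desired equality holds for every $E_v$ simultaneously. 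The main conceptual point is the reduction in the first paragraph, which truly requires precompleteness; everything after that parallels the proof of Lemma \ref{mI}.
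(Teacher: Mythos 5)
Your proposal is correct and follows essentially the same strategy as the paper's proof. The only difference is a harmless reordering: you first use precompleteness to identify $\nu_v(I)$ with $\min_{i,j}\mathrm{Ord}_{E_v}(\pi^*(\mathrm{d}f_i\wedge\mathrm{d}f_j))$ and then run the Zariski-open genericity argument, while the paper first produces the open set $W_I^\pi$ on which the order of $\pi^*(\mathrm{d}F_\alpha\wedge\mathrm{d}F_\beta)$ equals $m_v(i_0,j_0):=\min_{i,j}m_v(i,j)$ (by the same argument as in Lemma~\ref{mI}) and only afterwards invokes precompleteness to conclude that this minimum is $\nu_v(I)$; the ingredients, the local factorization at a smooth point of $E_v$, and the non-emptiness check are the same.
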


\begin{proof}
Let $p$ be a smooth  point of $E_v$ and $(x,y)$ be a local system of coordinates of $X_{\pi}$ centered at $p$ such that $E_v$ has  local  equation $x=0$.  In this setting we have for every $i,j \in \{1, \dots, k\}, i \neq j$  integers $m_v(i,j)$ such that    $$\pi^*(\mathrm{d}f_i \wedge \mathrm{d}f_j)(x,y)=x^{m_v(i,j)}\phi_{ij}(x,y) \mathrm{d}x \wedge \mathrm{d}y,$$ where $\phi_{ij}$ is an element of $\mathbb{C}\{x,y\}$ such that $\phi_{ij}(0,y) \neq 0$. For every  $\alpha,\beta$ in $\mathbb{P}(\mathbb{C}^k)$ we  have 
 $$ \pi^*(\mathrm{d}F_{\alpha} \wedge \mathrm{d}F_{\beta}) =\sum_{i\neq j} \alpha_i \beta_j x^{m_v(i,j)}\phi_{ij}(x,y)dx\wedge dy. $$
Let $i_0,j_0\in \{1,\dots,k\}$ such that  $ m_v(i_{0},j_{0})=\mathrm{min}\{m_v(i,j) \ | \ 1\leq i,j\leq k\}$. By similar arguments of the proof of Lemma \ref{mI}  we get the existence of a Zariski open set $W_{I}^{\pi}$ of $\mathbb{P}(\mathbb{C}^k) \times \mathbb{P}(\mathbb{C}^k)  $ such that
$$ \mathrm{Ord}_{E_v}(\pi^*(dF_{\alpha} \wedge dF_{\beta}))=m_v(i_{0},j_{0}),  \forall (\alpha,\beta) \in W_{I}^{\pi}. $$ Let $f,g$ be two elements of $I$ such that $$\mathrm{Ord}_{E_v}(\pi^*(df \wedge dg))=\nu_v(I). $$ By hypothesis  the set $\{\mathrm{d}f_i \wedge \mathrm{d}f_j\}_{1 \leq i,j \leq k}$ generates the $\mathcal{O}_{X,0}$-module  $\Omega_I^2$. Then, there exist elements $\{\psi_{ij} \}_{1 \leq i,j \leq k} $  of $\mathcal{O}_{X,0}$ such that $$\mathrm{d}f \wedge \mathrm{d}g=\sum_{1 \leq i,j\leq k} \psi_{ij} \mathrm{d}f_i \wedge \mathrm{d}f_j.$$ This implies that $$\pi^{*}(\mathrm{d}f \wedge \mathrm{d}g)=\sum_{1 \leq i,j\leq k} (\psi_{ij} \circ \pi) \pi^{*}(\mathrm{d}f_i \wedge \mathrm{d}f_j).$$It follows that  $m_v(i_{0},j_{0})$ is  equal to $\mathrm{Ord}_{E_v}(\pi^{*}(\mathrm{d}f \wedge \mathrm{d}g))=\nu_v(I)$ which implies that $\mathrm{Ord}_{E_v}( \pi^{*}(\mathrm{d}F_{\alpha} \wedge \mathrm{d}F_{\beta}))= \nu_{v}(I)$ for every $(\alpha, \beta)$ in $W_{I}^{\pi}$.

\end{proof} 
Until the end of this paper we use the following notations
\begin{nota}
	To a precomplete system of generators $F=(f_1,\dots,f_k)$ for $I$ we associate the family of  curves $\Pi_F:=\{\Pi_{\alpha \beta}\}_{(\alpha, \beta) \in O_F }$ where $O_F$ is the Zariski open set of $\mathbb{P}(\mathbb{C}^k) \times  \mathbb{P}(\mathbb{C}^k)$ such that the morphism $\Phi_{\alpha\beta}=(F_{\alpha},F_{\beta}):(X,0) \longrightarrow (\mathbb{C}^2,0)$ is finite for every $(\alpha,\beta)$ in $O_F$ and  $\Pi_{\alpha \beta}$ is the polar curve of the morphism  $\Phi_{\alpha,\beta}$. 	 \end{nota}

\begin{lemm}\label{basepointpolar} There exists a good resolution $\pi:(X_{\pi},E) \longrightarrow (X,0)$ such that for any precomplete system of generators $F=(f_1,f_2,\dots,f_k)$  of $I$ the modification $\pi$ has no basepoints for the family $\Pi_F$.
\end{lemm}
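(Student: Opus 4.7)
The plan is to take $\pi$ to be a good resolution of $(X,0)$ that factors through the \emph{principalization} of $\Omega_I^2$ (Definition \ref{principalization} in the paper), so that the pulled-back $\mathcal{O}_{X_\pi}$-module $\pi^*(\Omega_I^2)$ is locally principal on $X_\pi$. The argument then mirrors the proof of Proposition \ref{basepointofidealvssheaf}, but with the $\mathcal{O}_{X,0}$-module $\Omega_I^2$ of relative $2$-forms in place of the ideal $I$, and with the family of polar curves $\Pi_F$ in place of the family $L_F$ of hypersurface sections.

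More concretely, let $F=(f_1,\dots,f_k)$ be any precomplete system of generators of $I$. By Definition \ref{precompletegeneratorsystem}, the holomorphic $2$-forms $\{\mathrm{d}f_i\wedge \mathrm{d}f_j\}_{1\leq i,j\leq k}$ generate the $\mathcal{O}_{X,0}$-module $\Omega_I^2$, and therefore the pullbacks $\{\pi^*(\mathrm{d}f_i\wedge \mathrm{d}f_j)\}_{1\leq i,j\leq k}$ generate the locally principal sheaf $\pi^*(\Omega_I^2)$ stalkwise. Fix any point $p\in E$ and a local generator $s_p$ of $\pi^*(\Omega_I^2)_p$: we can then write $\pi^*(\mathrm{d}f_i\wedge \mathrm{d}f_j)=g_{ij}\,s_p$ with $g_{ij}\in \mathcal{O}_{X_\pi,p}$, and the fact that these elements generate $\pi^*(\Omega_I^2)_p$ forces at least one $g_{i_0 j_0}$ to be a unit at $p$.

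For $(\alpha,\beta)\in O_F$, the polar curve $\Pi_{\alpha\beta}$ of $\Phi_{\alpha\beta}=(F_\alpha,F_\beta)$ is the vanishing locus of $\mathrm{d}F_\alpha\wedge \mathrm{d}F_\beta=\sum_{i,j}\alpha_i\beta_j\,\mathrm{d}f_i\wedge \mathrm{d}f_j$, so
\[
\pi^*(\mathrm{d}F_\alpha\wedge \mathrm{d}F_\beta)=\Bigl(\sum_{i,j}\alpha_i\beta_j\,g_{ij}\Bigr)\,s_p.
\]
The strict transform $\Pi_{\alpha\beta}^*$ passes through $p$ if and only if $\sum_{i,j}\alpha_i\beta_j\,g_{ij}(p)=0$. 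Since the bihomogeneous polynomial $(\alpha,\beta)\mapsto \sum_{i,j}\alpha_i\beta_j\,g_{ij}(p)$ has the nonzero coefficient $g_{i_0j_0}(p)$, its non-vanishing locus is a Zariski open, non-empty subset $U_p\subset \mathbb{P}(\mathbb{C}^k)\times\mathbb{P}(\mathbb{C}^k)$. Intersecting with $O_F$ yields a Zariski open subset of $O_F$ on which $\Pi_{\alpha\beta}^*$ avoids $p$, so $p$ is not a basepoint of $\Pi_F$ in the sense of Definition \ref{pointbasedefi1996}. As $p$ was an arbitrary point of $E$ and the argument did not depend on the choice of precomplete generating set $F$, this proves the lemma.

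The step I expect to require the most care is the claim that one of the coefficients $g_{ij}(p)$ is a unit; that is precisely where the hypotheses ``$\pi$ factors through the principalization of $\Omega_I^2$'' and ``$F$ is precomplete'' interact, and it is the analogue in this $2$-form setting of the fact that for the family $L_F$ and the blow-up $\mathrm{BL}_I$, the pulled-back ideal is principally generated by one of the $f_i\circ \pi$ at each point (cf.\ the second half of the proof of Proposition \ref{basepointofidealvssheaf}). Once that is secured, the remainder is a routine genericity argument on $(\alpha,\beta)$.
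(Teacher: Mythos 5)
Your overall strategy --- make the pullback of $\Omega_I^2$ to a smooth model locally principal, then use a Nakayama-type argument to see that at each point of the exceptional divisor some $g_{i_0 j_0}$ is a unit, whence generic $(\alpha,\beta)$ avoid that point --- is the right idea and is in fact what the paper's explicit blow-up construction is secretly implementing. Your treatment of the ``for any precomplete $F$'' uniformity is also cleaner than the paper's, which splits the argument into two passes (first resolve for one fixed $F$, then show the same model works for any other precomplete $G$); in your version the uniformity falls out immediately because the locally principal sheaf depends only on $\Omega_I^2$, not on the chosen precomplete generators.

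However, there is a genuine circularity at the very first step. You take $\pi$ to ``factor through the principalization of $\Omega_I^2$'' and cite Definition \ref{principalization} --- but that definition reads ``we call the minimal good resolution \emph{as in Lemma \ref{basepointpolar}} the principalization of $\Omega_I^2$''. In other words, the principalization is \emph{defined} to be the object whose existence this lemma asserts; invoking it here assumes exactly what you need to prove. To repair this you must independently establish that a good resolution $\pi$ exists for which the image of $\pi^{*}\Omega_I^2$ in $\Omega^2_{X_\pi}$ is locally principal. You could either (a) do what the paper does: work in local coordinates at each basepoint $p$, package the coefficients $h_{ij}$ (your $g_{ij}$) into an $\mathfrak{m}_p$-primary ideal $I_p$ of $\mathcal{O}_{X_\pi,p}$, and blow up $I_p$, using Proposition \ref{basepointofidealvssheaf} at that ambient-smooth point to kill the basepoint, iterating finitely many times; or (b) note that on the smooth surface $X_\pi$ the sheaf $\Omega^2_{X_\pi}$ is a line bundle, so the image of $\pi^{*}\Omega_I^2$ is $\mathcal{J}\cdot\Omega^2_{X_\pi}$ for a coherent ideal sheaf $\mathcal{J}$, and then appeal to Hironaka's principalization of ideal sheaves to obtain a further blow-up on which $\mathcal{J}$ becomes locally principal. (You should also be a bit more careful that what you want to be locally principal is the \emph{image} of $\pi^{*}\Omega_I^2$ inside $\Omega^2_{X_\pi}$, i.e.\ the submodule of $2$-forms generated by the $\pi^{*}(\mathrm{d}f_i\wedge\mathrm{d}f_j)$, rather than the abstract module pullback --- the two need not agree, but it is the former that matches the paper's pointwise local computations and feeds correctly into the Nakayama step.) Once the existence is secured by either route, the rest of your argument goes through.
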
	 \begin{defi}\label{principalization}
	We  call the minimal good resolution  as in Lemma \ref{basepointpolar} the \textbf{principalization of the $\mathcal{O}_{X,0}$-module $\Omega_I^2$}.
     \end{defi}

\begin{proof}[Proof of Lemma \ref{basepointpolar}]

Let $F=(f_1, \dots,f_k)$ be a precomplete system of generators for $I$. Let us first prove that there exists a good resolution of $(X,0)$ which has no basepoint for the family $\Pi_{F}$. Let $\pi:(X_{\pi},E)\longrightarrow(X,0)$ be a good resolution and  $E_v$ be an irreducible component of $E$. 

Let $p$ be a smooth  point of $E_v$ and $(x,y)$ be a local system of coordinates of $X_{\pi}$ centered at $p$ such that $E_v$ has  local  equation $x=0$. We use again the notations of Lemma \ref{nu}   $$\pi^*(df_i \wedge df_j)(x,y)=x^{m_v(i,j)}\phi_{ij}(x,y)dx \wedge dy,$$ where $\phi_{ij}$ is an element of $\mathbb{C}\{x,y\}$ such that $\phi_{ij}(0,y) \neq 0$. Let $\alpha$ and  $\beta$ be two elements of $\mathbb{P}(\mathbb{C}^k)$. We  have 
 $$ \pi^*(dF_{\alpha} \wedge dF_{\beta}) =\sum_{i\neq j} \alpha_i \beta_j x^{m_v(i,j)}\phi_{ij}(x,y)dx\wedge dy. $$
 By Lemma \ref{nu} there exists a Zariski open set $W_{I}^{\pi}$ such that  $$\pi^{*}(dF_{\alpha} \wedge dF_{\beta})=x^{\nu_v(I) }S_{\alpha \beta}(x,y)dx \wedge dy, \ \text{and} \ S_{\alpha \beta}(0,y) \neq 0 \ \forall (\alpha,\beta) \in W_{I}^{\pi},$$	 
where $S_{\alpha \beta}(x,y)=\sum_{i\neq j} \alpha_i \beta_j x^{m_v(i,j)-\nu_v(I)}\phi_{ij}(x,y)$.  By definition  $S_{\alpha \beta}(x,y)=0$ is the local equation of the strict transform of the curve $\Pi_{\alpha \beta}$ by $\pi$. 

Let us assume that $p$ is a basepoint for the family of curves $\Pi_F$. If there exists $(\alpha_0,\beta_0)$ in $ W_{I}^{\pi}$  such that $\Pi_{\alpha_0\beta_0}^*$ does not pass through $p$ then $S_{\alpha_0\beta_0}$ is a unit of $\mathbb{C}\{x,y\}$. It implies that for every $(\alpha,\beta)$ in $ W_{I}^{\pi}$ there exists $h_{\alpha \beta}$ in $\mathbb{C}\{x,y\}$ such that

 $$\pi^{*}(dF_{\alpha} \wedge dF_{\beta})=h_{\alpha \beta} \pi^{*}(dF_{\alpha_0} \wedge dF_{\beta_0}).$$ For any $(\alpha,\beta)$  in $\mathbb{P}(\mathbb{C}^k) \times \mathbb{P}(\mathbb{C}^k)$ denote by $\omega_{\alpha\beta}$ the holomorphic $2$-form $\pi^{*}(dF_{\alpha} \wedge dF_{\beta})$. A direct computation  shows that for any elements $t_1,t_2$  of $\mathbb{C}$ we have the equality $$\omega_{\alpha_0+t_1 \alpha,\beta_0+t_2 \beta}=\omega_{\alpha_{0},\beta_{0}}(1+t_1h_{\alpha,\beta_0}+t_2h_{\alpha_0,\beta}+t_1t_2h_{\alpha,\beta}). $$ This last equality implies that for a generic choice of $t_1, t_2$ the  strict transform of the polar curve $\Pi_{\alpha_0+t_1 \alpha,\beta_0+t_2 \beta}^*$ does not pass through the point $p$. It follows that $p$ is not a basepoint for the family $\Pi_F$. 
 
 Assume  that $\Pi_{\alpha \beta}^*$ passes through $p$ for every $(\alpha, \beta)$ in $W_{I}^{\pi}$ or equivalently that $S_{\alpha \beta}(0,0)=0, \forall (\alpha,\beta) \in W_{I}^{\pi}$.  Denote by $I_{p}$ the  ideal of $\mathbb{C}\{x,y\}$  generated by the finite set $\{h_{ij}(x,y):=x^{m_v(i,j)-\nu_v(I)}\phi_{ij}(x,y)\}_{i \neq j}$. Since $S_{\alpha \beta}(0,0)=0, \forall (\alpha,\beta) \in W_{I}^{\pi}$ it follows that $h_{ij}(0,0)=0$ for all $i,j\in \{1,\dots,k \}$ thus $I_p$ contains a power of the maximal ideal of $\mathbb{C}\{x,y\}$. 
     Consider the blow-up  $\mathrm{BL}_{I_p}$ of the ideal $I_{p}$. By Proposition \ref{basepointofidealvssheaf} the modification  $\mathrm{BL}_{I_p}$ has no basepoint for the family of  curves $\{\sum_{i \neq j} \gamma_{ij}h_{ij}=0\}_{[\gamma_{ij}] \in \mathbb{P}(\mathbb{C}^{k^2-k})}$. Let $O_p$ be the Zariski open set of  $\mathbb{P}(\mathbb{C}^{k^2-k})$ such that the strict transforms of the curves $\{\sum_{i \neq j} \gamma_{ij} h_{ij}=0\}_{[\gamma_{ij}] \in O_p}$ by $\mathrm{BL}_{I_p}$ do not intersect. Consider the continuous map $P:\mathbb{P}(\mathbb{C}^k) \times \mathbb{P}(\mathbb{C}^k) \longrightarrow \mathbb{P}(\mathbb{C}^{k^2-k}), P(\alpha,\beta)=[\alpha_i\beta_j]_{i \neq j}$ and consider the Zariski open set $V_p = P^{-1}(O_p)$. It follows that the strict transforms of the  curves $\{S_{\alpha \beta}=\sum_{i \neq j} \alpha_i \beta_j h_{ij}=0\}_{(\alpha,\beta) \in V_p}$ do not intersect which leads us to conclude that $\mathrm{BL}_{I_p}$ does not have basepoints for the family $\{\sum_{i \neq j} \alpha_i \beta_j h_{i j}=0\}_{(\alpha,\beta) \in \mathbb{P}(\mathbb{C}^k) \times \mathbb{P}(\mathbb{C}^k)}$. We perform the same operation for  every smooth basepoint of the family $\Pi_{F}$ in order to get the desired good resolution. A similar argument works if $p$ is a double point of $E$. \\

Now, let $G=(g_1, \dots,g_l)$ be another precomplete system of generators for $I$. Let $\pi_F:(X_F,E_F) \longrightarrow (X,0)$ be any good resolution which has no basepoints for the family $\Pi_F$. We will prove that $\pi_F$ has no basepoints for the family $\Pi_G$. Since $G$ is precomplete then for every couple $(\alpha,\beta)  \in \mathbb{P}(\mathbb{C}^k) \times \mathbb{P}(\mathbb{C}^k) $ we have \begin{equation}\label{3.1}
      \pi^*(\mathrm{d}F_{\alpha} \wedge \mathrm{d}F_{\beta})=\sum_{i \neq j} \psi_{ij}^{\alpha \beta} \pi^*(\mathrm{d}g_i \wedge \mathrm{d}g_j). \end{equation}

       Let $p$ be a smooth point of an irreducible component $E_v$ of $E_F$. We are going to prove that $p$ is not a basepoint for the family $\Pi_G$. Let $(x,y)$ be a local coordinate system of $X_F$ centered at $p$ such that $x=0$ is the equation of $E_F$. It follows from  Lemma  \ref{nu} and our hypothesis that $ \pi_F$ has no basepoints for the family $\Pi_F$, that is
     $$\pi^*(\mathrm{d}F_{\alpha} \wedge \mathrm{d}F_{\beta})=x^{\nu_v(I)} S_{\alpha \beta}(x,y) \mathrm{d}x \wedge \mathrm{d}y, \ \text{with} \ S_{\alpha \beta}(0,0) \neq 0,$$ for all $(\alpha,\beta)$ in a Zariski open set  $O_p$ of $\mathbb{P}(\mathbb{C}^k) \times \mathbb{P}(\mathbb{C}^k)$.  Then, using Equality  (\ref{3.1}), we obtain the existence of $i_0,j_0 \in \{1,\dots,l \}$ such that 
      $$ \pi^*(\mathrm{d}g_{i_0} \wedge \mathrm{d}g_{j_0})=x^{\nu_v(I)}S_{0}(x,y) \mathrm{d}x \wedge \mathrm{d}y $$ and $S_0(0,0)\neq 0$. With a similar argument as the one we  previously used in the proof of the existence of $\pi_F$ we get that $p$ is not a basepoint for the family $\Pi_G$. The same argument works if $p$ is a double point of $E_F$.

	\end{proof}

		Now, we are ready to state and prove the result we announced in Subsection \ref{section4.1}.	
	\begin{prop}\label{inner-rate-ideal}
	Let $(X,0)$ be a complex surface germ with an isolated singularity  and let $I$ be a $\mathfrak{m}$-primary ideal of $\mathcal{O}_{X,0}$. Let $F=(f_1,f_2,\dots,f_k)$ be a precomplete system  of generators for $I$. Let $ \pi: (X_{\pi},E) \longrightarrow (X,0)$ be a good resolution which factors through the blow-up  of the ideal $I$ and the principalization of the  $\mathcal{O}_{X,0}$-module $\Omega_I^2$. There exists a Zariski open set $V_{I}^{\pi}$ of $\mathbb{P}(\mathbb{C}^k) \times \mathbb{P}(\mathbb{C}^k)$ such that for any irreducible component $E_v$ of $E$  and any element $(\alpha,\beta)$ of $V_{I}^{\pi}$ we have the equality
	 $$q_{F_{\alpha},v}^{F_{\beta}}=q_v^{I}, $$ where $q_v^{I}$ is the rational number of Definition \ref{inner-rate-idealdefi}.

	\end{prop}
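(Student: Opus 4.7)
The strategy is a direct assembly of Lemmas \ref{mI} and \ref{nu}. Comparing the formulas
\[q_{F_\alpha,v}^{F_\beta}=\frac{\mathrm{Ord}_{E_v}(\pi^{*}(\mathrm{d}F_{\alpha}\wedge\mathrm{d}F_{\beta}))-m_v(F_\beta)+1}{m_v(F_\beta)}\qquad\text{and}\qquad q_v^{I}=\frac{\nu_v(I)-m_v(I)+1}{m_v(I)},\]
one sees that it suffices to produce a non-empty Zariski open set $V_I^{\pi}\subset\mathbb{P}(\mathbb{C}^k)\times\mathbb{P}(\mathbb{C}^k)$ on which, \emph{simultaneously for every irreducible component $E_v$ of $E$}, the two equalities $m_v(F_\beta)=m_v(I)$ and $\mathrm{Ord}_{E_v}(\pi^{*}(\mathrm{d}F_{\alpha}\wedge\mathrm{d}F_{\beta}))=\nu_v(I)$ hold.

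For the first equality I would invoke Lemma \ref{mI}, which supplies a Zariski open subset $U_I^{\pi}$ of $\mathbb{P}(\mathbb{C}^k)$, \emph{uniform in $v$}, such that $m_v(F_\gamma)=m_v(I)$ for every $\gamma\in U_I^{\pi}$ and every component $E_v$. For the second equality I would invoke Lemma \ref{nu}—this is the step where precompleteness of $F$ is essential, since it is what allows any $2$-form $\mathrm{d}f\wedge\mathrm{d}g$ with $f,g\in I$ to be written as an $\mathcal{O}_{X,0}$-linear combination of the generators $\mathrm{d}f_i\wedge\mathrm{d}f_j$—to obtain a Zariski open subset $W_I^{\pi}\subset\mathbb{P}(\mathbb{C}^k)\times\mathbb{P}(\mathbb{C}^k)$, again uniform in $v$, on which the order of $\pi^{*}(\mathrm{d}F_\alpha\wedge\mathrm{d}F_\beta)$ along every component equals $\nu_v(I)$.

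I would then set
\[V_I^{\pi}:=\bigl((U_I^{\pi}\times U_I^{\pi})\cap W_I^{\pi}\bigr)\cap O_F,\]
which is a finite intersection of non-empty Zariski open subsets of the irreducible variety $\mathbb{P}(\mathbb{C}^k)\times\mathbb{P}(\mathbb{C}^k)$, hence a non-empty Zariski open set; intersecting with $O_F$ guarantees that $\Phi_{\alpha\beta}=(F_\alpha,F_\beta)$ remains finite, so that $q_{F_\alpha,v}^{F_\beta}$ is well-defined in the sense of Definition \ref{inner}. For any $(\alpha,\beta)\in V_I^{\pi}$, substitution of the two equalities above into the defining formula for $q_{F_\alpha,v}^{F_\beta}$ immediately yields $q_v^{I}$, giving the desired equality at every irreducible component $E_v$ at once.

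The conceptual heart of the argument has already been done in Lemmas \ref{mI} and \ref{nu}; the only genuine task left here is to verify that the Zariski open sets produced by those lemmas can be chosen \emph{uniformly in $v$}, so that a single $V_I^{\pi}$ serves all components. The hypothesis that $\pi$ factors through $\mathrm{BL}_I$ and through the principalization of $\Omega_I^2$ plays a geometric role: via Proposition \ref{basepointofidealvssheaf} and Lemma \ref{basepointpolar}, it ensures that the ideal sheaves generated by $I$ and by $\Omega_I^2$ on $X_\pi$ are locally principal, i.e.\ that the infima defining $m_v(I)$ and $\nu_v(I)$ are actually attained by generic elements of $I$ and generic pairs of elements of $I$ respectively. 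Without this, the substitution in the previous step could be vacuous at some component, which is precisely the potential obstacle we must rule out.
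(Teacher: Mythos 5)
Your proposal is correct and takes essentially the same route as the paper: the paper also sets $V_I^\pi = (U_I^\pi\times U_I^\pi)\cap W_I^\pi$ and then reduces to substitution using Lemmas \ref{mI} and \ref{nu}. Your explicit intersection with $O_F$ to guarantee finiteness of $\Phi_{\alpha\beta}$ is a small bookkeeping improvement that the paper leaves implicit.

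There is, however, one inaccuracy in your final paragraph that is worth flagging. You assert that the hypothesis that $\pi$ factors through $\mathrm{BL}_I$ and the principalization of $\Omega_I^2$ is what ensures that the infima $m_v(I)$ and $\nu_v(I)$ are generically attained. That is not where the hypothesis enters: Lemmas \ref{mI} and \ref{nu} are stated and proved for an \emph{arbitrary} good resolution $\pi$, so the generic attainment of those infima holds with no assumption on $\pi$ beyond being a good resolution. In the paper's proof, the factorization hypothesis is instead used (via Proposition \ref{basepointofidealvssheaf} and Definition \ref{principalization}) to produce, for each $E_v$ and each smooth point $p\in E_v$, a pair $(\alpha,\beta)\in V_I^\pi$ with $p\notin F_\alpha^*\cup F_\beta^*\cup\Pi_{\alpha\beta}^*$, so that Proposition \ref{inner-rate} can be invoked at $p$ and the geometric (curvette/distance) meaning of $q_{F_\alpha,v}^{F_\beta}$ is secured. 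Your argument sidesteps this by treating the formula $q_{g,v}^{f}=\bigl(\mathrm{Ord}_{E_v}(\pi^*(\mathrm{d}g\wedge\mathrm{d}f))-m_v(f)+1\bigr)/m_v(f)$ as a standing definition and substituting; that is logically clean and gives the stated numerical equality, but it is worth being aware that the substitution step itself never actually needs the factorization hypothesis, and that your stated rationale for it does not match what the hypothesis really does.
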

	
\begin{proof}[Proof of Proposition \ref{inner-rate-ideal}] Let $V_I^{\pi}=(U_I^{\pi}\times U_I^{\pi}) \cap W_{I}^{\pi}  \subset \mathbb{P}(\mathbb{C}^k)\times \mathbb{P}(\mathbb{C}^k)  $, where $U_I^{\pi},W_{I}^{\pi}$  are Zariski open sets as in Lemmas \ref{mI} and \ref{nu}. Let $E_v$ be an irreducible component of $E$ and $p$ be a smooth point of $E$ in $E_v$. By Lemma \ref{basepointofidealvssheaf} and Definition \ref{principalization} there exists $(\alpha,\beta)$ in $V_{I}^{\pi}$ such that $p \notin F_{\alpha}^* \cup F_{\beta}^* \cup \Pi_{{\alpha \beta}}^*$. By Proposition \ref{inner-rate} we have   $$ q_{F_{\alpha},v}^{F_{\beta}}=\frac{\mathrm{Ord}_{E_v}(\pi^{*} (\mathrm{d} F_{\alpha} \wedge \mathrm{d}F_{\beta}))-m_v(F_{\beta})+1}{m_v(F_{\beta})}.$$ Since $(\alpha,\beta)$ is an element of $V_{I}^{\pi}$ then by Lemmas \ref{mI} and \ref{nu} we have the equalities

$$ m_v(F_{\alpha}) = m_v(F_{\beta})=m_v(I) \ \ \  \text{and} \ \ \  \mathrm{Ord}_{E_v}(\pi^{*} (\mathrm{d} F_{\alpha} \wedge \mathrm{d}F_{\beta}))=\nu_v(I).$$ It implies that the number  $q_{F_{\alpha},v}^{F_{\beta}}$ does not depend on the choice of $(\alpha,\beta)$ in $V_{I}^{\pi}$ and
$$q_{F_{\alpha},v}^{F_{\beta}}=q_v^{I}:=\frac{\nu_v(I)-m_v(I)+1}{m_v(I)}. $$\end{proof}

To end this section, let us compare the inner rates associated with the maximal ideal $\mathfrak{m}$ of $\mathcal{O}_{X,0}$ with the inner rates of complex surface germ defined in {\cite{BFP}}. Let $(X,0) \subset (\mathbb{C}^n,0)$ be a complex surface germ with an isolated singularity at the origin of $\mathbb{C}^n$. Denote  by $\mathbb{G}(2,\mathbb{C}^n)$  the grassmanian  of complex planes of $\mathbb{C}^n$.  Consider the \textbf{Gauss map}$$\begin{array}{rcl}
\gamma: X \backslash \{0\} &\to& \mathbb{G}(2,\mathbb{C}^n), \\
p &\mapsto & T_{p}X
\end{array} $$ which associates to each point $p$ of $X \setminus \{0\}$ the tangent plane of $X$ at $p$, and take the closure of its graph: $$\mathcal{N}(X):= \overline{   \mathrm{Graph}(\gamma) } = \overline{ \{ (p,H) \in X \backslash 0 \times \mathbb{G}(2,\mathbb{C}^n)    \ | \  T_pX=H \}   }   \subset \mathbb{C}^n \times \mathbb{G}(2,\mathbb{C}^n) $$
\begin{defi}\label{classicnashmodif}
	The \textbf{Nash transform} of $X$  is the modification $\nu$ defined by
$$\begin{array}{rcl}
\nu :  \mathcal{N}(X)  &\to& X\\
(p,H)&\mapsto & p
\end{array}. $$ 
\end{defi}

\begin{prop}[{\cite[Lemma 3.2]{BFP}}]\label{bfp3.2} Let $(X,0) \subset (\mathbb{C}^n,0)$ be a complex surface germ with an isolated singularity and $ \pi: (X_{\pi},E) \longrightarrow (X,0)$ be a good resolution which factors through the Nash transform of $X$ and the blow-up of the maximal ideal of $\mathcal{O}_{X,0}$.

 There exists a rational number $q_v \in \mathbb{Q}_{>0}$ such that  for any pair of curvettes   $\gamma_1^*$ and $\gamma_2^*$  of an irreducible component $E_v$ of the exceptional divisor $E$  we have
$$ \mathrm{d}(\gamma_1 \cap \mathbb{S}_\epsilon,\gamma_2 \cap \mathbb{S}_\epsilon) = \Theta(\epsilon^{q_v}),$$where $\gamma_1=\pi(\gamma_1^*)$, $\gamma_2=\pi(\gamma_2^*)$ and $\mathrm{d}$ is the arc length distance induced by $\mathbb{C}^n$ on $X$.

Furthermore, there exists a Zariski open set $\Omega_{\pi}$ of $\mathbb{G}(n-2,\mathbb{C}^n)$ such that for any linear projection $\ell=(\ell_1,\ell_2):(X,0)\longrightarrow (\mathbb{C}^2,0)$ whose kernel is an element of $\Omega_{\pi}$ we have   $q_v=q_{v,\ell_1}^{\ell_2}$.
\end{prop}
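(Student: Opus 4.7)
The plan is to derive this proposition directly from Proposition \ref{inner-rate-ideal} applied to the maximal ideal $\mathfrak{m}$ of $\mathcal{O}_{X,0}$, combined with Proposition \ref{inner-rate} applied to a generic linear projection. Since $(X,0) \subset (\mathbb{C}^n,0)$, the restrictions of the ambient coordinate functions $(x_1,\ldots,x_n)$ form a system of generators of $\mathfrak{m}$. I would first verify that this system is precomplete in the sense of Definition \ref{precompletegeneratorsystem}: any $f, g \in \mathfrak{m}$ can be expanded as convergent power series in $x_1,\ldots,x_n$, so $\mathrm{d}f \wedge \mathrm{d}g$ is an $\mathcal{O}_{X,0}$-linear combination of the $\mathrm{d}x_i \wedge \mathrm{d}x_j$, which therefore generate $\Omega_\mathfrak{m}^2$.

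Next I need to check that $\pi$ satisfies the hypotheses of Proposition \ref{inner-rate-ideal} for $I = \mathfrak{m}$. The assumption that $\pi$ factors through the blow-up of the maximal ideal is exactly the condition that it factors through $\mathrm{BL}_\mathfrak{m}$. For the principalization of $\Omega_\mathfrak{m}^2$, observe that for every $(\alpha,\beta)$ in the open set $O_{(x_1,\ldots,x_n)}$, the polar curve $\Pi_{\alpha\beta}$ is precisely the polar curve of the linear projection $(F_\alpha,F_\beta)$, so the family $\Pi_{(x_1,\ldots,x_n)}$ coincides with the family of polar curves of generic linear projections. Since the Nash transform is classically known to be the minimal modification that resolves this latter family, and $\pi$ factors through it by hypothesis, $\pi$ has no base points for $\Pi_{(x_1,\ldots,x_n)}$; by Lemma \ref{basepointpolar} and Definition \ref{principalization}, this means $\pi$ factors through the principalization of $\Omega_\mathfrak{m}^2$. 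Proposition \ref{inner-rate-ideal} then provides a Zariski open set $V_\mathfrak{m}^\pi \subset \mathbb{P}(\mathbb{C}^n) \times \mathbb{P}(\mathbb{C}^n)$ such that for every irreducible component $E_v$ and every $(\alpha,\beta) \in V_\mathfrak{m}^\pi$, the inner rate $q_{F_\alpha,v}^{F_\beta}$ equals a rational number $q_v := q_v^\mathfrak{m}$ depending only on $v$, which by Proposition \ref{inner-rate} controls the distance $\mathrm{d}_{\mathbb{C}^2}(\gamma_1 \cap \{F_\beta = \epsilon\}, \gamma_2 \cap \{F_\beta = \epsilon\}) = \Theta(\epsilon^{q_v})$ for any pair of curvettes satisfying condition \eqref{cond}.

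The main obstacle that remains is to upgrade this planar Euclidean estimate into the announced arc-length estimate on $X$ between $\gamma_1 \cap \mathbb{S}_\epsilon$ and $\gamma_2 \cap \mathbb{S}_\epsilon$, for any pair of curvettes of $E_v$. I would proceed in two steps. First, since $F_\beta$ is a linear form and each curvette image $\gamma_i$ admits a Puiseux parametrization in which $F_\beta$ is a monomial in the parameter, the points $\gamma_i \cap \{F_\beta = \epsilon\}$ and $\gamma_i \cap \mathbb{S}_\epsilon$ lie at Euclidean distance $\Theta(\epsilon)$ from $0$, and a change of parameter shows that the two corresponding slice distances have the same asymptotic order. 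Second, near a smooth point of $E_v$ chosen outside $F_\alpha^* \cup F_\beta^* \cup \Pi_{\alpha\beta}^*$, the map $(F_\alpha,F_\beta) \circ \pi$ is a local biholomorphism and hence bilipschitz on a small neighborhood, so the planar Euclidean distance between the slices and the arc-length distance on $X$ are bilipschitz equivalent. This handles pairs of curvettes attached near a fixed smooth point; a standard covering argument on the compact component $E_v$, combined with the independence of $q_v$ from the smooth point $p$ noted at the end of the proof of Proposition \ref{inner-rate}, extends the estimate to every pair of curvettes of $E_v$. Finally, the equality $q_v = q_{v,\ell_1}^{\ell_2}$ for generic linear projections is then exactly the conclusion of Proposition \ref{inner-rate-ideal}, with $\Omega_\pi$ taken to be the set of kernels of the linear projections corresponding to $(\alpha,\beta) \in V_\mathfrak{m}^\pi$.
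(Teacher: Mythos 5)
The paper does not prove Proposition \ref{bfp3.2}; it is quoted verbatim from \cite[Lemma 3.2]{BFP} and used as a black box (in particular Remark \ref{maximalidealrate} combines it with Proposition \ref{inner-rate-ideal} rather than re-deriving it). So there is no internal proof to compare yours against, and what you are really doing is proposing an original proof of the cited external result.

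Your reduction to the precomplete system $(x_1,\dots,x_n)$ of $\mathfrak{m}$ and the verification that $\pi$ satisfies the hypotheses of Proposition \ref{inner-rate-ideal} (using the factorisation through the Nash transform and the blow-up of $\mathfrak{m}$, together with Theorem \ref{spivaknash}) are all sound, and the ``furthermore'' part about $\Omega_\pi$ follows along those lines. However, the step you yourself flag as the main obstacle --- upgrading the estimate on distances in the image plane $\mathbb{C}^2$ to an estimate on the arc-length distance inside $X \subset \mathbb{C}^n$ --- has a genuine gap. You assert that near a smooth point $p$ of $E_v$ outside $F_\alpha^* \cup F_\beta^* \cup \Pi_{\alpha\beta}^*$ the map $(F_\alpha,F_\beta)\circ\pi$ is a local biholomorphism and hence bilipschitz. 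This is false: in the local coordinates used in the proof of Proposition \ref{inner-rate}, this map has the form $(x,y)\mapsto(x^{m_v(g)}U(x,y),\,x^{m_v(f)})$, which contracts the exceptional component $\{x=0\}$ to the origin and whose Jacobian vanishes along it. It is an isomorphism off $E$, but the derivative degenerates as $x\to 0$, so it is not uniformly bilipschitz on any neighbourhood of $p$. Consequently the identification of the planar Euclidean distance $\mathrm{d}_{\mathbb{C}^2}(\gamma_1\cap\{u_2=\epsilon\},\gamma_2\cap\{u_2=\epsilon\})$ with the arc-length distance $\mathrm{d}_i(\gamma_1\cap\mathbb{S}_\epsilon,\gamma_2\cap\mathbb{S}_\epsilon)$ on $X$ does not follow from a ``local biholomorphism implies bilipschitz'' argument.

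What is actually needed here is the nontrivial fact that a \emph{generic} plane projection restricted to $X$ is locally bilipschitz for the inner metric of $X$ away from the polar wedges, uniformly as one approaches the singular point. This is the substance of \cite[Lemma 3.2]{BFP} (and of the earlier work on local conic structure and generic projections it rests on), and it cannot be dispensed with by the argument you give. To salvage the approach you would either need to invoke that local bilipschitz property of generic projections as a separate input, or reproduce the analysis in \cite{BFP}, which compares the inner metric of $X$ in the pieces of a carrousel/resolution decomposition with the corresponding metric data in $\mathbb{C}^2$ rather than appealing to biholomorphy of $\Phi\circ\pi$.
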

\begin{defi}[{\cite[Definition 3.3]{BFP}}]\label{genericBFP}
The rational number $q_v$ is the \textbf{inner rate of $(X,0)$ along $E_v$}. We will say that a linear projection $\ell=(\ell_1,\ell_2)$ as in Proposition \ref{bfp3.2} is \textbf{generic with respect to $\pi$}. The polar curve of such a projection is also said to be \textbf{generic with respect to $\pi$}. In this case we also say the linear forms $\ell_1$ and $\ell_2$ are \textbf{generic} and the curves $\ell_1^{-1}(0)$ and  $\ell_2^{-1}(0)$ are said to be \textbf{generic hyperplane sections}. \end{defi}
In the following remark we will explain that the inner rate associated to the maximal ideal coincides with the inner rate in the sense of Definition \ref{genericBFP}.
\begin{rema}\label{maximalidealrate}
	The inner rate $q_v$ of $(X,0) \subset (\mathbb{C}^n,0)$ along $E_v$ coincide with the number $q_v^{\mathfrak{m}}$, where $\mathfrak{m}$ is the maximal ideal of $\mathcal{O}_{X,0}$. Indeed, consider the linear forms $${\left\{\begin{array}{rcl}
P_i :  \mathbb{C}^n  &\to& \mathbb{C}\\
(z_1,\dots,z_n) &\mapsto & z_i
\end{array} \right\}. }_{1 \leq i \leq n} $$ The family of linear forms $P=(P_1,\dots,P_n)$ is a precomplete system of generators for $\mathfrak{m}$. By Proposition \ref{inner-rate-ideal}, there exists a Zariski open set $V_{\mathfrak{m}}^{\pi}$ of $\mathbb{P}(\mathbb{C}^n) \times \mathbb{P}(\mathbb{C}^n) $ such that for any elements $(\alpha,\beta)\in V_{\mathfrak{m}}^{\pi}$ we have
 $q_v^{\mathfrak{m}}= q_{v,P_\alpha}^{P_\beta}$. Denote by $H_{\alpha,\beta}$ the kernel of the linear projection $\ell_{\alpha,\beta}=(P_{\alpha},P_{\beta})$. The set $\overline{V_{\mathfrak{m}}^{\pi}}=\{ H_{\alpha \beta} \ | \ \alpha,\beta \in V_{\mathfrak{m}}^{\pi}\}$ is a Zariski open set of $\mathbb{G}(n-2,\mathbb{C}^n)$. By Lemma \ref{bfp3.2} there exists a Zariski open set $\Omega_{\pi}$ of $\mathbb{G}(n-2,\mathbb{C}^n)$ such that $q_v=q_{v,\ell_1}^{\ell_2}$ whenever $\ell=(\ell_1,\ell_2)$ is a linear projection whose kernel is an element of $\Omega_{\pi}$. It follows that for any linear projection $\ell=(\ell_1,\ell_2)$ whose kernel is an element $\Omega_{\pi} \cap \overline{V_{\mathfrak{m}}^{\pi}}$ we have $q_v=q_{v,\ell_1}^{\ell_2}=q_{v}^{\mathfrak{m}}.$ \end{rema}

   \section{Inner rate function of an m-primary ideal}\label{sect4}
Let us  define a metric on the quasi-monomial valuations of $\mathrm{NL}(X,0)$ compatible with $I$ following \cite[Subsection 2.3]{BFP}. We start by doing it on any graph $\Gamma_{\pi}$ where $\pi:(X_{\pi},E) \longrightarrow (X,0)$ is a good resolution which factors through the blow up of $I$ and the principalization of $\Omega^2_I$.  Let us endow the dual graph  $\Gamma_{\pi}$ with a metric by declaring the length of an edge $e_{v,v'}$ to be\begin{equation*}\mathrm{length}_I(e_{v,v'})=\frac{1}{m_v(I)m_{v'}(I)}.\end{equation*}


Now, let $p$ be an intersection  point between two irreducible components  $E_v$ and $E_{v'}$ of $E$ and let $\tilde{\pi}:(X_{\tilde{\pi}},\tilde{E}) \longrightarrow (X,0)$ be the good resolution obtained by blowing up the point $p$. Then, the exceptional component $E_w$ that arises has multiplicity $m_w(I)=m_v(I)+m_{v'}(I)$ . Since $\frac{1}{m_w(I)m_v(I)}+\frac{1}{m_w(I)m_{v'}(I)}=\frac{1}{m_v(I)m_{v'}(I)}$ (by Lemma \ref{recurenceI}),  the inclusion of $\Gamma_{\pi}$ in $\Gamma_{\tilde{\pi}}$ is an isometry. Therefore, by passing to the direct limit, the metric on the dual graphs $\Gamma_{\pi}$  defines a distance denoted $\mathrm{d}_{I}$ on the quasi-monomial valuations of $\mathrm{NL}(X,0)$.
\begin{rema}
	We cannot define a distance on $\mathrm{NL}(X, 0)$, since it is not a metrizable space $($see right after Remark 2.3 of \cite{GignacRuggiero2017}$)$. However, we can define a distance on the set of quasi-onomial valuations as outlined below. This distance, however, will not be compatible with the topology of the non-archimedean link in the sense that it will define a stronger topology. See \cite[Theorem 2.29]{GignacRuggiero2017}.
\end{rema}

\begin{defi}\label{skeletalmetricofI}
	We call $d_I$ the \textbf{skeletal metric on $\mathrm{NL}(X,0)$ with respect to $I$}. When we want to specify the distance on the quasi-monomial valuations of  $\mathrm{NL}(X,0)$ we will adopt the notation $(\mathrm{NL}(X,0),\mathrm{d}_{I})$.
\end{defi}
\begin{prop}\label{inner-rate functionI} There exists a unique continuous function  $$ \mathcal{I}_{I} : (\mathrm{NL}(X,0),\mathrm{d}_I) \longrightarrow \mathbb{R}_{>0} \cup \{ +\infty  \} $$ such that $\mathcal{I}_I(v)=q_{v}^{I}$  for every divisorial point $v$ of $\mathrm{NL}(X,0)$. If $\pi$ is a good resolution of $(X,0)$ which factors through the blow-up of  $I$ and the principalization  of $\Omega_{I}^2$ then $\mathcal{I}_{I}$ is linear on the edges of $\Gamma_{\pi}$.\end{prop}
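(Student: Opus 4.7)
Uniqueness is immediate from the density of divisorial valuations in $(\mathrm{NL}(X,0), \mathrm{d}_I)$ (Remark \ref{densiteremarque}): any two continuous functions coinciding on a dense subset must be equal. For existence together with the asserted piecewise-linearity, my plan is to first construct $\mathcal{I}_I$ on each sub-skeleton $\Gamma_\pi$, where $\pi$ is a good resolution factoring through both the blow-up of $I$ and the principalization of $\Omega_I^2$, by setting $\mathcal{I}_I(v) := q_v^I$ on vertices and extending linearly on every edge with respect to $\mathrm{d}_I$; and then to glue these together using Theorem \ref{universaldualgraph}, which identifies $\mathrm{NL}(X,0)$ with the inverse limit of the dual graphs. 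Since any two such resolutions admit a common refinement that again factors through the required modifications and is reached from either by a finite sequence of blow-ups of points of the exceptional divisor, everything reduces to checking compatibility after a single point blow-up.

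The key case is the blow-up of a double point $p \in E_v \cap E_{v'}$, producing a new component $E_w$. Compatibility here means that linear interpolation of $q_v^I, q_{v'}^I$ on the edge $e_{v,v'}$ at the position of $\mathrm{val}_{E_w}$ should recover $q_w^I$. Using $m_w(I) = m_v(I) + m_{v'}(I)$ (Lemma \ref{recurenceI}) and the fact that $\mathrm{val}_{E_w}$ sits at $\mathrm{d}_I$-distance $1/(m_v(I) m_w(I))$ from $v$ on an edge of total length $1/(m_v(I) m_{v'}(I))$, the interpolated value is
\[
\frac{m_v(I)\, q_v^I + m_{v'}(I)\, q_{v'}^I}{m_w(I)},
\]
which, by the very definition of the inner rates, equals $q_w^I$ if and only if
\[
\nu_w(I) = \nu_v(I) + \nu_{v'}(I) + 1.
\]
Blow-ups at a smooth point of a single $E_v$ only attach a new ``hair'' to $\Gamma_\pi$ and impose no constraint on previously defined edges.

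The main (and essentially only) technical step is therefore this identity, and this is where I expect the real work. I would use Proposition \ref{inner-rate-ideal} to choose a precomplete system of generators $F$ of $I$ together with $(\alpha,\beta)$ in the Zariski open set $V_I^{\widetilde\pi}$ such that, in local coordinates $(x,y)$ at $p$ with $E_v = \{x=0\}$ and $E_{v'} = \{y=0\}$,
\[
\pi^*(\mathrm{d}F_\alpha \wedge \mathrm{d}F_\beta) = x^{\nu_v(I)}\, y^{\nu_{v'}(I)}\, u(x,y)\, \mathrm{d}x \wedge \mathrm{d}y
\]
with $u(0,0) \neq 0$ — this is precisely where the principalization hypothesis is used. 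In the blow-up chart $x = t$, $y = ts$ the identity $\mathrm{d}x \wedge \mathrm{d}y = t\, \mathrm{d}t \wedge \mathrm{d}s$ gives the upper bound $\nu_w(I) \leq \nu_v(I) + \nu_{v'}(I) + 1$, and the reverse inequality is automatic because for any $h_1, h_2 \in I$ the pullback $\pi^*(\mathrm{d}h_1 \wedge \mathrm{d}h_2)$ has orders at least $\nu_v(I), \nu_{v'}(I)$ along $E_v, E_{v'}$. Once this identity is in place, the linear extensions on all eligible $\Gamma_\pi$ are mutually compatible and yield a continuous function on $(\mathrm{NL}(X,0), \mathrm{d}_I)$ with the required properties; the value $+\infty$ is reached at semivaluations that are $\mathrm{d}_I$-limits of divisorial valuations with $q_v^I \to +\infty$.
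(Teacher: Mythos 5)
Your proposal is correct, and it arrives at the same reduction as the paper (pass to divisorial points by density, then to the effect of a single point blow-up on a double point $E_v\cap E_{v'}$), but the way you verify the key compatibility step differs from the paper's. The paper simply invokes point~(ii) of Lemma~\ref{recurenceI}, which it proves by applying the inner rate formula (Theorem~\ref{laplacien}, itself based on the adjunction formula for $K_{X_\pi}$) to the new component $E_w$. You instead translate the required recurrence $q_w^I = \bigl(m_v(I)q_v^I + m_{v'}(I)q_{v'}^I\bigr)/m_w(I)$ into the numerical identity $\nu_w(I)=\nu_v(I)+\nu_{v'}(I)+1$ and prove this directly: the principalization hypothesis (via Lemma~\ref{basepointpolar}/Lemma~\ref{nu}) lets you write $\pi^*(\mathrm{d}F_\alpha\wedge\mathrm{d}F_\beta)=x^{\nu_v(I)}y^{\nu_{v'}(I)}u(x,y)\,\mathrm{d}x\wedge\mathrm{d}y$ with $u(0,0)\neq 0$, and the factor $t$ from the blow-up Jacobian $\mathrm{d}x\wedge\mathrm{d}y = t\,\mathrm{d}t\wedge\mathrm{d}s$ gives the exact order $\nu_v(I)+\nu_{v'}(I)+1$ along $E_w$ for this generic pair, while the lower bound for arbitrary $h_1,h_2\in I$ follows from divisibility of the coefficient by $x^{\nu_v(I)}y^{\nu_{v'}(I)}$. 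This is a more elementary, self-contained route: it bypasses Theorem~\ref{laplacien} entirely and makes the role of the Jacobian "$+1$" transparent. The cost is that it essentially re-derives the content of Lemma~\ref{recurenceI}(ii) by hand, whereas the paper's proof is shorter precisely because that lemma (and the inner rate formula behind it) is already in place and is also needed elsewhere. One small point worth making explicit: to get a single pair $(\alpha,\beta)$ that simultaneously realizes the orders $\nu_v(I)$ and $\nu_{v'}(I)$ and has $\Pi_{\alpha\beta}^*$ missing $p$, you are intersecting the two nonempty Zariski open sets furnished by Lemma~\ref{nu} and the absence of basepoints; this is exactly how the set $V_{I,p}^\pi$ is built in the proof of Lemma~\ref{recurenceI}, so the intersection is nonempty as you assume.
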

\begin{defi}
	The function $\mathcal{I}_I$ is called \textbf{the inner rates function with respect to the ideal} $I$.
\end{defi}

We get Proposition \ref{inner-rate functionI} by going through the exact same proof as  \cite[Lemma 3.8]{BFP},  using the following Lemma \ref{recurenceI} instead of \cite[Lemma 3.6]{BFP}.
\begin{lemm}\label{recurenceI}
	
	Let $I$ be an $\mathfrak{m}$-primary ideal for $\mathcal{O}_{X,0}$ and $\pi :(X_{\pi},E) \longrightarrow (X,0)$ be a good resolution of $(X,0)$ which factors through the blow-up of $I$ and the principalization of $\Omega_{I}^2$. Let $E_v$ be an irreducible component of $E$. Let $p$ be a point of  $E_v$ and let $E_{w}$ be the exceptional component created by the blow-up of $X_{\pi}$ at $p$. Then:
	\begin{enumerate}
		\item if $p$ is a smooth point of $E$, then
		\begin{align*}
			m_w(I)=m_v(I)  &&  \text{and} && q_{w}^{I}=q_{v}^I+\frac{1}{m_v(I)}.
		\end{align*}\label{recurenceIpoint1}

		\item if $p$ is a double point of $E_v$ and $E_{v'}$, then
		\begin{align*}
			m_w(I)=m_v(I)+ m_{v'}(I) &&\text{and}&& q_{w}^I=\frac{q_{v}^I m_v(I) + q_{v'}^I m_{v'}(I)}{m_v(I)+m_{v'}(I)}.
		\end{align*}\label{recurenceIpoint2}				
	\end{enumerate}
\end{lemm}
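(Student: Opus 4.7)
Fix a precomplete system of generators $F=(f_1,\dots,f_k)$ of $I$ and let $\sigma:(X_{\tilde\pi},\tilde E)\to(X_\pi,E)$ denote the blow-up of $X_\pi$ at $p$, with new exceptional component $E_w$; set $\tilde\pi=\pi\circ\sigma$. The strategy is to compute $m_w(I)$ and $\nu_w(I)$ directly from explicit local expressions of $F_\alpha\circ\pi$ and $\pi^*(\mathrm{d}F_\alpha\wedge\mathrm{d}F_\beta)$, and then to pull back through $\sigma$. The hypothesis that $\pi$ factors through both $\mathrm{BL}_I$ and the principalization of $\Omega_I^2$ will guarantee that, for $(\alpha,\beta)$ generic, the non-exceptional factors are units at $p$, which is what will turn the naive inequalities into equalities.

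For case (i) (smooth point of $E_v$), choose local coordinates $(x,y)$ on $X_\pi$ at $p$ with $E_v=\{x=0\}$. By Lemma~\ref{mI} and Proposition~\ref{basepointofidealvssheaf} applied to the factorization through $\mathrm{BL}_I$, for generic $\alpha$ one has $(F_\alpha\circ\pi)(x,y)=x^{m_v(I)}\phi_\alpha(x,y)$ with $\phi_\alpha(0,0)\ne 0$. In the blow-up chart $x=x_2y_2,\ y=y_2$ (so $E_w=\{y_2=0\}$), this pulls back to $x_2^{m_v(I)}y_2^{m_v(I)}\phi_\alpha(x_2y_2,y_2)$, giving $m_w(F_\alpha)=m_v(I)$. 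The trivial bound $m_w(h)\ge m_v(h)\ge m_v(I)$ for every $h\in I$ then forces $m_w(I)=m_v(I)$. Analogously, Lemma~\ref{nu} and the principalization hypothesis yield, for generic $(\alpha,\beta)$,
\[\pi^*(\mathrm{d}F_\alpha\wedge\mathrm{d}F_\beta)=x^{\nu_v(I)}T_{\alpha\beta}(x,y)\,\mathrm{d}x\wedge\mathrm{d}y,\quad T_{\alpha\beta}(0,0)\ne 0.\]
Since $\sigma^*(\mathrm{d}x\wedge\mathrm{d}y)=y_2\,\mathrm{d}x_2\wedge\mathrm{d}y_2$, the order of the pullback along $E_w$ is $\nu_v(I)+1$; combined with the matching lower bound $\mathrm{Ord}_{E_w}(\tilde\pi^*(\mathrm{d}h_1\wedge\mathrm{d}h_2))\ge\mathrm{Ord}_{E_v}(\pi^*(\mathrm{d}h_1\wedge\mathrm{d}h_2))+1\ge\nu_v(I)+1$ for all $h_1,h_2\in I$, this gives $\nu_w(I)=\nu_v(I)+1$. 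Substituting into $q_w^I=(\nu_w(I)-m_w(I)+1)/m_w(I)$ produces $q_w^I=q_v^I+\tfrac{1}{m_v(I)}$.

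Case (ii) (double point of $E_v\cap E_{v'}$) is the same computation in coordinates $(x,y)$ with $E_v=\{x=0\}$, $E_{v'}=\{y=0\}$. Generic choices now give $(F_\alpha\circ\pi)(x,y)=x^{m_v(I)}y^{m_{v'}(I)}\phi_\alpha$ with $\phi_\alpha(0,0)\ne 0$ and
\[\pi^*(\mathrm{d}F_\alpha\wedge\mathrm{d}F_\beta)=x^{\nu_v(I)}y^{\nu_{v'}(I)}T_{\alpha\beta}(x,y)\,\mathrm{d}x\wedge\mathrm{d}y,\quad T_{\alpha\beta}(0,0)\ne 0.\]
In the chart $x=x_2y_2,\ y=y_2$, the exceptional factor of $F_\alpha\circ\tilde\pi$ becomes $y_2^{m_v(I)+m_{v'}(I)}$ and that of $\tilde\pi^*(\mathrm{d}F_\alpha\wedge\mathrm{d}F_\beta)$ becomes $y_2^{\nu_v(I)+\nu_{v'}(I)+1}$, yielding $m_w(I)=m_v(I)+m_{v'}(I)$ and $\nu_w(I)=\nu_v(I)+\nu_{v'}(I)+1$. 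Substituting into the definition of $q_w^I$ and rewriting $m_v(I)q_v^I+m_{v'}(I)q_{v'}^I=\nu_v(I)+\nu_{v'}(I)-m_v(I)-m_{v'}(I)+2$ produces the weighted average displayed in the statement.

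\textbf{Main obstacle.} The computations themselves are essentially bookkeeping; the only delicate point is ensuring the non-vanishing of the remainder factors $\phi_\alpha(0,0)$ and $T_{\alpha\beta}(0,0)$ at $p$. This is precisely what the two factorization hypotheses on $\pi$ deliver: the factorization through $\mathrm{BL}_I$ implies, via Proposition~\ref{basepointofidealvssheaf}, that $p$ is not a basepoint of the family $L_F$, while the factorization through the principalization of $\Omega_I^2$ ensures (Lemma~\ref{basepointpolar}) that $p$ is not a basepoint of the polar family $\Pi_F$. Without these hypotheses the conclusions would hold only as inequalities.
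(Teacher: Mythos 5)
Your proof is correct, and it takes a genuinely different route from the paper in its final step. You and the paper agree on the setup: both fix a precomplete system of generators, use the factorization hypotheses (through $\mathrm{BL}_I$ and through the principalization of $\Omega^2_I$) to ensure that for generic $(\alpha,\beta)$ the strict transforms $F_\alpha^*$, $F_\beta^*$, $\Pi_{\alpha\beta}^*$ avoid the center $p$, and both compute $m_w(I)$ directly from the local normal form $F_\alpha\circ\pi = x^{m_v(I)}(\text{unit})$ (resp.\ $x^{m_v(I)}y^{m_{v'}(I)}(\text{unit})$) pulled back through a blow-up chart. Where you diverge is in deriving the new inner rate $q_w^I$. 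The paper stops after computing $m_w(I)$, observes that $F_\alpha^*, F_\beta^*, \Pi_{\alpha\beta}^*$ do not meet $E_w$, and then invokes the inner rates formula (Theorem \ref{laplacien}) applied to $E_w$, letting the Laplacian identity plus the adjunction formula carry the arithmetic. You instead compute $\nu_w(I)$ head-on, tracking how the $2$-form $\pi^*(\mathrm{d}F_\alpha\wedge\mathrm{d}F_\beta) = x^{\nu_v(I)}T_{\alpha\beta}\,\mathrm{d}x\wedge\mathrm{d}y$ (resp.\ $x^{\nu_v(I)}y^{\nu_{v'}(I)}T_{\alpha\beta}\,\mathrm{d}x\wedge\mathrm{d}y$) transforms under $\sigma^*(\mathrm{d}x\wedge\mathrm{d}y)=y_2\,\mathrm{d}x_2\wedge\mathrm{d}y_2$, and then substitute $\nu_w(I)$ and $m_w(I)$ into the defining formula for $q_w^I$. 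The matching lower bound for every $h_1,h_2\in I$ closes the argument correctly. Your route is more elementary and self-contained — it avoids dependence on Theorem \ref{laplacien} and hence on the adjunction formula — and makes the recursion a pure bookkeeping fact about how $m$ and $\nu$ behave under a point blow-up. The paper's route is a one-line consequence once the Laplacian formula is in hand, which is economical given that Theorem \ref{laplacien} is already proved and is needed elsewhere in the paper anyway. Both are valid; yours has the pedagogical virtue of exposing exactly why the factorization hypotheses are used.
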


\begin{proof}
	Let \( F = (f_1, \dots, f_k) \) be a precomplete system of generators for \( I \). Let \( V_{I,p}^{\pi} \) be the subset of the open Zariski set \( V_{I}^{\pi} \) of Proposition \(\ref{inner-rate-ideal}\) such that for all elements \((\alpha, \beta) \) in \( V_{I,p}^{\pi} \), we have
	\[ 
	p \notin \left( F_{\alpha}^* \cup F_{\beta}^* \cup \Pi_{\alpha \beta}^* \right). 
	\]
	Since \( \pi \) factors through the blow-up of \( I \) and the principalization of \( \Omega^2_I \) by Lemma \(\ref{basepointofidealvssheaf}\) and Definition \(\ref{principalization}\), \( V_{I,p}^{\pi} \) is also a Zariski open set of \( \mathbb{P}(\mathbb{C}^k) \times \mathbb{P}(\mathbb{C}^k) \).
	
	\underline{Proof of \(\ref{recurenceIpoint1}\):}
	
	Assume that \( p \) is a smooth point of an irreducible component \( E_v \) of \( E \). Let \((x,y)\) be a local system of coordinates centered at \( p \) such that \( x=0 \) is the local equation of \( E_v \). Let \((\alpha,\beta)\) be an element of \( V_{I,p}^{\pi} \). In this case,
	$$ 
	F_{\alpha} \circ \pi(x,y) = x^{m_v(I)} U_{\alpha}(x,y), \quad F_{\beta} \circ \pi(x,y) = x^{m_v(I)} U_{\beta}(x,y) 
	$$
	where $ U_{\alpha}, U_{\beta} $ are units of $ \mathcal{O}_{X_{\pi},p} $. Let $ \mathrm{BL}_p $ be the blow-up centered at $p$ and denote by $ E_w $ the resulting irreducible component. In the coordinates chart $(x,y) = (x',x'y')$, we have
	
	\begin{eqnarray*}	
	F_{\alpha} \circ \pi \circ \mathrm{BL}_p(x',y') = x'^{m_v(I)} U_{\alpha}(x',x'y'), \\   F_{\beta} \circ \pi \circ \mathrm{BL}_p(x',y') = x'^{m_v(I)} U_{\beta}(x',x'y')
	\end{eqnarray*}
	which means that \( m_w(I) = m_v(I) \). Furthermore, since \( F_{\alpha}^*, F_{\beta}^* \) and \( \Pi_{\alpha,\beta}^* \) do not pass through the point \( p \), it follows that they do not meet the component \( E_w \). We then get the equality  
	\[ 
	q_{w}^{I} = q_{v}^I + \frac{1}{m_v(I)} 
	\]
	as a direct consequence of Theorem \(\ref{laplacien}\) applied to the component $E_w$.
	
	\underline{Proof of \(\ref{recurenceIpoint2}\):}
	
	Assume that \( p \) is an intersecting point of two irreducible components \( E_{v} \) and \( E_{v'} \) of \( E \). Let \((x,y)\) be a local system of coordinates centered at \( p \) such that \( x=0 \) and \( y=0 \) are the local equations of \( E_{v} \) and \( E_{v'} \), respectively. Let \((\alpha,\beta)\) be an element of \( V_{I,p}^{\pi} \). In this case,
	\[ 
	F_{\alpha} \circ \pi(x,y) = x^{m_{v}(I)} y^{m_{v'}(I)} U_{\alpha}(x,y), \quad F_{\beta} \circ \pi(x,y) = x^{m_{v}(I)} y^{m_{v'}(I)} U_{\beta}(x,y) 
	\]
	where \( U_{\alpha}, U_{\beta} \) are units of $ \mathcal{O}_{X_{\pi},p} $. Let \( \mathrm{BL}_p \) be the blow-up centered at \( p \) and denote by \( E_w \) the resulting irreducible component. In the coordinates chart \((x,y) = (x',x'y')\), we have
	$$ 
	F_{\alpha} \circ \pi \circ \mathrm{BL}_p(x',y') = x'^{m_{v}(I)} (x'y')^{m_{v'}(I)} U_{\alpha}(x',x'y')$$  $$F_{\beta} \circ \pi \circ \mathrm{BL}_p(x',y') = x'^{m_v(I)} (x'y')^{m_{v'}(I)} U_{\beta}(x',x'y')
	$$
	which means that \( m_w(I) = m_{v}(I) + m_{v'}(I) \). Furthermore, since \( F_{\alpha}^*, F_{\beta}^* \) and \( \Pi_{\alpha,\beta}^* \) do not pass through the point \( p \), it follows that they do not meet the component \( E_w \). We then get the equality  
	\[ 
	q_{w}^I = \frac{q_{v}^I m_v(I) + q_{v'}^I m_{v'}(I)}{m_v(I) + m_{v'}(I)}
	\]
	as a direct consequence of Theorem \(\ref{laplacien}\) applied again to the component $E_w$.\end{proof}

\begin{proof}[Proof of Proposition \ref{inner-rate functionI}]

	Let $I$ be an $\mathfrak{m}$ primary ideal of $\mathcal{O}_{X,0}$ and $\pi:(X_{\pi},E) \longrightarrow (X,0) $ be a good resolution of $(X,0)$ which factors through the blow-up of $I$ and the principalization of $\Omega_I^2$.
	We only need to show that the inner rates with respect to $I$ extend uniquely to a continuous map on $\Gamma_{\pi}$ which is linear on its edges with integral slopes, because $\mathrm{NL}(X,0)$ is homeomorphic to the inverse limit of dual graphs.\\
	Let $e_{v v'}$ be an edge of $\Gamma_{\pi} $. The  slope of $\mathcal{I}_I$ on $e_{v,v'}$ is $ \frac{q_{v' }^I - q_{v }^I  }{\mathrm{length}_{I}(e_{v v' } )}= m_v(I)m_{v'}(I)(q_{v' }^I - q_{v }^I )$ and it is an integer by Definition  \ref{inner-rate-idealdefi}.
	
	By density of the divisorial valuations (Remark \ref{densiteremarque}), it is sufficient to prove the linearity of $\mathcal{I}_I$ along  $e_{v v'}$ on the set of the divisorial valuations. On the other hand, since every divisorial point of that edge are obtained by successively  blowing up  its double points, it is sufficient to prove that $\mathcal{I}_{I}$  is linear on the set $\{ v, v',v''\}$, where $v''$ is obtained by blowing up the double point $E_{v} \cap E_{v'}$. Therefore, all we have to show is that 
	\begin{equation}\label{funk} \frac{\mathcal{I}_I(v') - \mathcal{I}_I(v)}{ \mathrm{length}_I(e_{v v'} ) } =\frac{\mathcal{I}_I(v'') - \mathcal{I}_I(v)}{ \mathrm{length}_I(e_{v v''} ) }.\end{equation} Since the good resolution $\pi$ factors through the blow-up of $I$ and the principalization of $\Omega_I^2$  the equality \eqref{funk}  is a direct consequence of point \ref{recurenceIpoint2} of Lemma \ref{recurenceI}. \end{proof}

Now, let us state the principal result of this section, which  tells us that the inner rate function associated to an $\mathfrak{m}$-primary ideal is completely determined by its values on the vertices of the dual graph of the minimal good resolution of $(X,0)$ which factors through the blow-up of $I$ and the principalization of the  $\mathcal{O}_{X,0}$-module $\Omega_{I}^2$.
\begin{prop}\label{nashvsrates}
	Let $I$ and $J$ be two $\mathfrak{m}$-primary ideals of the local ring $\mathcal{O}_{X,0}$. Let $\pi:(X_{\pi},E) \longrightarrow (X,0)$ be the minimal good resolution which factors through the blow-up of $I$ and the principalization of $\Omega^2_I$.  
	
	Then, $\mathcal{I}_{I}=\mathcal{I}_{J}$ if and only if the following conditions are verified
	
	\begin{itemize}
		\item The good resolution $\pi$ is the minimal good resolution which factors through the blow-up of $J$ and the principalization of  $\Omega^2_J$.
		\item $m_v(I)=m_v(J)$ and   $q_v^{I}=q_v^{J}$  for all vertices $v$ of $V(\Gamma_{\pi})$.	
	\end{itemize} 		
\end{prop}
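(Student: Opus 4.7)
The plan is to prove each implication separately, with the ``only if'' direction splitting into three steps of increasing difficulty.

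For the \emph{if} direction, I would propagate the equalities $m_v(I)=m_v(J)$ and $q_v^I=q_v^J$ from $V(\Gamma_\pi)$ to every divisorial valuation of $\mathrm{NL}(X,0)$. Any divisorial valuation above $\pi$ is obtained by iterated blow-ups of smooth or double points, and by Lemma~\ref{recurenceI} each such blow-up computes the new $m_w$ and $q_w$ purely from the values attached to the neighbouring vertices via the same formulas. A direct induction then transfers the equalities to all divisorial valuations above $\pi$; density of divisorial valuations (Remark~\ref{densiteremarque}) combined with the continuity of $\mathcal{I}_I$ and $\mathcal{I}_J$ (Proposition~\ref{inner-rate functionI}) gives $\mathcal{I}_I = \mathcal{I}_J$ on the entire non-archimedean link.

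For the \emph{only if} direction, the equality $q_v^I = q_v^J$ at every divisorial valuation is immediate from $\mathcal{I}_I(v) = \mathcal{I}_J(v)$. To deduce $m_v(I) = m_v(J)$, I would blow up a smooth point of $E_v$ to obtain a new divisorial valuation $w$; Lemma~\ref{recurenceI}(i) then yields
\[
 m_v(I) = \frac{1}{q_w^I - q_v^I} = \frac{1}{q_w^J - q_v^J} = m_v(J).
\]
Since this argument applies to every divisorial valuation, the skeletal metrics $d_I$ and $d_J$ on $\mathrm{NL}(X,0)$ coincide.

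It remains to prove that the minimal good resolution for $I$ agrees with the one for $J$. My strategy is to establish an intrinsic characterization of $\pi_I$ in terms of $\mathcal{I}_I$: namely, $\pi_I$ is the coarsest good resolution of $(X,0)$ on which $\mathcal{I}_I$ is affine on every edge with integer slopes predicted by Theorem~\ref{laplacien}. The easy direction of this characterization is Proposition~\ref{inner-rate functionI}. For the converse, I would combine the basepoint characterization (Proposition~\ref{basepointofidealvssheaf}) with the inner rates formula: a persistent basepoint of $L_F$ or of $\Pi_F$ would, after one further blow-up, produce a vertex $w$ whose $m_w$ or $q_w$ differs from the value obtained by affine extrapolation from its neighbours, creating a slope discontinuity in $\mathcal{I}_I$. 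Once this characterization is established, the equality $\mathcal{I}_I = \mathcal{I}_J$ together with $d_I = d_J$ immediately yields $\pi_I = \pi_J$.

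I expect the main obstacle to lie precisely in this characterization: converting the analytic statement ``$\mathcal{I}_I$ is affine on every edge'' into the geometric condition ``$\pi$ factors through the blow-up of $I$ and the principalization of $\Omega_I^2$''. This direction seems to require a careful case analysis separating basepoints coming from the ideal itself (which produce a jump in $m_v(I)$ via Lemma~\ref{recurenceI}) from basepoints of $\Pi_F$ coming from $\Omega_I^2$ (which show up via the polar vector $P_\pi$ in the inner rates formula~\ref{laplacien}).
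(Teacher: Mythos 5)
Your overall strategy is sound and uses the right ingredients (Lemma \ref{recurenceI}, the inner rates formula \ref{laplacien}, the basepoint criterion Proposition \ref{basepointofidealvssheaf}), and the \emph{if} direction matches the paper's argument. But there are two places where the proposal diverges from the paper in ways that leave real work undone.

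First, a technical point about your step deducing $m_v(I) = m_v(J)$. You invoke Lemma \ref{recurenceI}(i) for \emph{both} $I$ and $J$, but the hypothesis of that lemma requires $\pi$ to factor through the blow-up of the relevant ideal and the principalization of its $2$-form module -- which for $J$ is precisely what you are trying to establish. This is fixable: if you blow up a \emph{generic} smooth point $p$ of $E_v$ (avoiding the finitely many basepoints of $J$'s linear system and polar family), then the conclusion $q_w^J = q_v^J + 1/m_v(J)$ holds pointwise even though $\pi$ need not factor through $J$'s data globally. You did not specify genericity, so as written the reasoning is circular. The paper avoids this issue entirely by proving Lemma \ref{integralclosure} first: working on a common refinement, it shows that the normalized blow-ups $\overline{\mathrm{BL}_I}$ and $\overline{\mathrm{BL}_J}$ are isomorphic and that $I \cdot \mathcal{O}_{\overline{X_I}} = J \cdot \mathcal{O}_{\overline{X_I}}$, which immediately gives $m_v(I) = m_v(J)$ for all $v$ and also that $\pi$ factors through $\mathrm{BL}_J$.

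Second, and more seriously, you route the ``minimality'' step through an intrinsic characterization of $\pi_I$ as the coarsest good resolution on which $\mathcal{I}_I$ is affine with the predicted integer slopes. This characterization is plausible -- and the slope-jump mechanism you describe for its converse direction is exactly the contradiction the paper runs -- but it is a strictly stronger statement than Proposition \ref{nashvsrates} needs, and you explicitly flag it as the ``main obstacle'' without supplying a proof. The paper does not prove or use any such characterization. Instead it argues directly: having established (via Lemma \ref{integralclosure}) that $\pi$ factors through $\mathrm{BL}_J$ and that $m_v(I)=m_v(J)$, it assumes a basepoint $p$ for $J$'s polar family survives on $\pi$, blows it up to get $E_w$, and compares $q_w^I = q_v^I + \tfrac{1}{m_v(I)}$ (Lemma \ref{recurenceI}, valid for $I$) against $q_w^J = q_v^J + \tfrac{1 + \Pi_{\alpha\beta}^* \cdot E_w}{m_v(J)}$ (Theorem \ref{laplacien}, which accommodates the polar contribution), deriving $1 = 1 + \Pi^*_{\alpha\beta}\cdot E_w$, a contradiction; minimality then follows by swapping the roles of $I$ and $J$. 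So the paper's argument never needs to characterize $\pi_I$ in terms of $\mathcal{I}_I$ -- it only needs the directed implication. If you want to keep your framing, you would have to prove the characterization in full (including ruling out the possibility that $\mathcal{I}_I$ is accidentally affine on a strictly coarser resolution), which is a nontrivial detour. I'd recommend following the paper's more economical route through Lemma \ref{integralclosure} and the direct contradiction.
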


The following preliminary lemma is required to prove Proposition \ref{nashvsrates} and will have another interest in the last section of the paper.

\begin{lemm}\label{integralclosure}
	
	Let $I$ and $J$	be two $\mathfrak{m}$-primary ideals of $\mathcal{O}_{X,0}$. Let $\overline{\mathrm{BL}_I}:(\overline{X_I},\overline{E_I}) \longrightarrow (X,0)$ and  $\overline{\mathrm{BL}_J}:(\overline{X_J},\overline{E_J}) \longrightarrow (X,0)$ be the normalized blow-ups of $I$ and $J$ respectively. If $\mathcal{I}_{I}=\mathcal{I}_J$ then $I.\mathcal{O}_{\overline{X_I}}=	J.\mathcal{O}_{\overline{X_I}}$, $I.\mathcal{O}_{\overline{X_J}}=	J.\mathcal{O}_{\overline{X_J}}$ and the modifications $\overline{\mathrm{BL}_{I}}$ and $\overline{\mathrm{BL}_J}$ are isomorphic i.e, each one factors through the other.\end{lemm}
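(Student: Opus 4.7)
The plan is to reduce the lemma to proving that $m_v(I)=m_v(J)$ for every component $E_v$ of the exceptional divisor $\overline{E_I}$ of $\overline{\mathrm{BL}_I}$, and symmetrically for $\overline{E_J}$. Once this is established, the ideal sheaves $I.\mathcal{O}_{\overline{X_I}}$ and $J.\mathcal{O}_{\overline{X_I}}$ correspond to the same effective Cartier divisor $\sum_v m_v(I)E_v$ and hence coincide: the first sheaf is locally principal by the universal property of blow-ups (Proposition \ref{hironakablowupideal}), while local principality of the second follows from absence of basepoints for a precomplete system of generators of $J$ on $\overline{X_I}$, checked via Proposition \ref{basepointofidealvssheaf}. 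The isomorphism $\overline{\mathrm{BL}_I}\cong\overline{\mathrm{BL}_J}$ then follows immediately from Proposition \ref{hironakablowupideal} applied in both directions together with the universal property of normalization.

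To extract information on the multiplicities $m_v$, I would take a good resolution $\pi\colon(X_\pi,E)\to(X,0)$ factoring through both $\overline{\mathrm{BL}_I}$ and $\overline{\mathrm{BL}_J}$ and through the principalizations of $\Omega_I^2$ and $\Omega_J^2$. By Proposition \ref{inner-rate functionI} both $\mathcal{I}_I$ and $\mathcal{I}_J$ are then piecewise linear on $\Gamma_\pi$ in the respective metrics $d_I$ and $d_J$, and the hypothesis $\mathcal{I}_I=\mathcal{I}_J$ combined with density of divisorial points (Remark \ref{densiteremarque}) gives $q_v^I=q_v^J$ at every divisorial vertex. On an edge $e_{v,v'}$, the vertex $w$ produced by blowing up the double point $E_v\cap E_{v'}$ satisfies the weighted average formula of Lemma \ref{recurenceI}; equating $q_w^I=q_w^J$ together with the endpoint equalities and simplifying forces
\[
(q_v^I-q_{v'}^I)\bigl(m_v(I)\,m_{v'}(J)-m_v(J)\,m_{v'}(I)\bigr)=0,
\]
so the ratio $c_v:=m_v(I)/m_v(J)$ is constant on every connected subgraph of $\Gamma_\pi$ on which $\mathcal{I}_I$ is non-flat. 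On such a subgraph, substituting $m_v(I)=c\,m_v(J)$ into the inner rate formula (Theorem \ref{laplacien}) applied to both $I$ and $J$, and using $q_v^I=q_v^J$, yields the identity
\[
(1-c)\bigl(\mathrm{val}_{\Gamma_\pi}(v)+2g_v-2\bigr)=c\,(f_J^*-\Pi_J^*)\cdot E_v-(f_I^*-\Pi_I^*)\cdot E_v
\]
for every $v$; since the left-hand side is a fixed topological invariant of $\pi$, generic choices of $f_I,f_J$ in $I,J$ and of their associated polar curves force $c=1$, whence $m_v(I)=m_v(J)$ on the essential vertices.

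The main obstacle I anticipate is the handling of subgraphs on which $\mathcal{I}_I$ is locally constant, since there the ratio argument is silent; overcoming it will require a more delicate application of the inner rate formula together with the flexibility in choosing precomplete generators of $I$ and $J$ whose strict transforms and polar curves avoid the non-essential exceptional components, so that the constraint above restricts cleanly to the components corresponding to $\overline{E_I}$ and $\overline{E_J}$. Once the equality of multiplicities is secured, verifying via Proposition \ref{basepointofidealvssheaf} that $\overline{X_I}$ has no basepoints for the family of generators of $J$ completes the identification $I.\mathcal{O}_{\overline{X_I}}=J.\mathcal{O}_{\overline{X_I}}$, and the symmetric argument on $\overline{X_J}$ closes the proof.
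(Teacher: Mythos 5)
Your reduction to showing $m_v(I)=m_v(J)$ is the right high-level plan, and the rest of the argument (pushing the equality of ideal sheaves from a common resolution down to $\overline{X_I}$, $\overline{X_J}$ and invoking the universal property of Proposition \ref{hironakablowupideal}) matches the paper's proof. The gap is in how you extract $m_v(I)=m_v(J)$.

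You blow up the \emph{double point} of an edge $e_{v,v'}$ and derive $(q_v^I-q_{v'}^I)\bigl(m_v(I)\,m_{v'}(J)-m_v(J)\,m_{v'}(I)\bigr)=0$. This only controls the ratio $m_v(I)/m_v(J)$ along edges where $\mathcal{I}_I$ is non-constant, and as you note yourself, it says nothing on flat subgraphs; your proposed fallback via Theorem \ref{laplacien} and ``generic choices of $f_I,f_J$'' is not a proof, since the inner rate formula ties $m_v(\cdot)q_v^{\cdot}$ to intersection numbers $f^*\cdot E_v$ and $\Pi^*\cdot E_v$ that depend on the specific functions chosen, not just on the ideal, so forcing $c=1$ from it requires a separate argument you have not supplied. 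The paper sidesteps all of this with a simpler observation: blow up a \emph{smooth} (generic) point of $E_v$ rather than a double point. Point \eqref{recurenceIpoint1} of Lemma \ref{recurenceI} then gives $q_w^I = q_v^I + \tfrac{1}{m_v(I)}$ and $q_w^J = q_v^J + \tfrac{1}{m_v(J)}$ for the new component $E_w$, and since $\mathcal{I}_I=\mathcal{I}_J$ forces $q_v^I=q_v^J$ and $q_w^I=q_w^J$, subtraction yields $\tfrac{1}{m_v(I)}=\tfrac{1}{m_v(J)}$ directly, vertex by vertex, with no issue of flatness. You should replace your ratio argument on double points by this free-point blow-up; once $m_v(I)=m_v(J)$ is known for every $v$, your concluding steps go through as in the paper.
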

\begin{proof}
	Let $\pi:(X_{\pi},E)\longrightarrow (X,0)$ be a good resolution which factors through the blow-ups of the ideals $I$ and $J$ and the principalizations of the  $\mathcal{O}_{X,0}$-modules $\Omega_I^2$ and $\Omega_J^2$. We first prove that the ideal sheaves $I.\mathcal{O}_{X_{\pi}}=\mathcal{O}_{X_{\pi}}(-\sum_{V}\mathrm{Ord}_{V}(I) V)$ and $J.\mathcal{O}_{X_{\pi}}=\mathcal{O}(-\sum_{V}\mathrm{Ord}_{V}(J) V)$ are equal, where $\mathrm{Ord}_{V}(I) := \mathrm{min} \{\mathrm{Ord}_V(h) \ | \ h \in I\}$ whenever $V$ is a curve embedded in $X_{\pi}$ . Since $\pi$ factors through the blow-ups of $I$ and $J$ then, by Proposition \ref{hironakablowupideal}, the ideal sheaves  $I.\mathcal{O}_{X_{\pi}}$ and $J.\mathcal{O}_{X_{\pi}}$  are locally principal.

	It remains to prove that their associated divisors are equal. Let $E_v$ be an irreducible component of $E$ and let us prove that $m_v(I)=m_v(J)$.  Consider the blow-up centered at a smooth point of the component $E_v$ and denote by $E_w$ the resulting component. By hypothesis we have $q_{w}^{I}=q_{w}^J$ and it follow by Lemma \ref{recurenceI} that  $$ q_w^{I}=q_v^{I}+\frac{1}{m_v(I)}=q_v^{J}+\frac{1}{m_v(J)}=q_w^{J}.$$ The equality below implies that $m_v(I)=m_v(J)$ because by hypothesis $q_v^I=q_v^J$. It follows that $$I.\mathcal{O}_{X_{\pi}}=	J.\mathcal{O}_{X_{\pi}}.$$

	Denote by $\overline{\mathrm{BL}_I}:(\overline{X_I},\overline{E_I}) \longrightarrow (X,0)$ the normalized blow-up of $I$. We will prove that $I. \mathcal{O}_{\overline{X_I}}=J. \mathcal{O}_{\overline{X_I}}$. Let $r:(X_{\pi},E)\longrightarrow (\overline{X_I},\overline{E}_I)$ be the modification such that $ \overline{\mathrm{BL}}_I \circ r = \pi$. Let $p$ be a point of  $\overline{E}_{I}$. Then, there exists $h \in (\mathcal{O}_{\overline{X_I}})_p$ such that $ (I. \mathcal{O}_{\overline{X_I}})_p$ is generated by $h$. Then $h$ generates $ (I. \mathcal{O}_{\overline{X_I}})_q$ for all $q$ in a neighborhood of $p$. Since $r$ is an isomorphism  when restricting the image to a punctured neighborhood $U_p$ of $p$ in $\overline{X_I}$, $h$ must divide all functions of $ (J. \mathcal{O}_{\overline{X_I}})_p$, that is, for all $\phi \in (J. \mathcal{O}_{\overline{X_I}})_p$, we have that $\phi = h . g$, for some function $g$. We claim that there exists a function $\phi$ such that $g$ is a unit. In fact, if this was not true, then after applying  the modification $r$   we would have that $I.\mathcal{O}_{X_{\pi}}$ and $J.\mathcal{O}_{X_{\pi}}$ are distinct and it is a contradiction.	 We have proved that $I.\mathcal{O}_{\overline{X_I}}=	J.\mathcal{O}_{\overline{X_I}}$ and by symmetry of the roles of $I$ and $J$ we have $I.\mathcal{O}_{\overline{X_J}}=	J.\mathcal{O}_{\overline{X_J}}$. \\
	
	By Proposition \ref{hironakablowupideal} the sheaf $I.\mathcal{O}_{\overline{X_I}}$ is locally principal which means that it is also the case for $J.\mathcal{O}_{\overline{X_I}}$. It implies that $\overline{\mathrm{BL}_I}$ factors through $\overline{\mathrm{BL}_J}$. We can also prove symmetrically that $\overline{\mathrm{BL}_J}$ factors through $\overline{\mathrm{BL}_I}$.\end{proof}

Now, we are ready to prove the main result of this section.
\begin{proof}[Proof of Proposition \ref{nashvsrates}]
	$\bullet$ Assume that $\mathcal{I}_{I}=\mathcal{I}_{J}$ and let us prove first that $\pi$ factors through the blow-up of $J$ and the principalization of $\Omega_J^2$. By Lemma \ref{integralclosure} the modifications $\overline{\mathrm{BL}_I}$ and $\overline{\mathrm{BL}_J}$ are isomorphic i.e, factors through each other. It follows that $\pi$ factors through $\mathrm{BL}_J$.

	It remains to prove that $\pi$ factors through the principalization of $\Omega_{J}^2$. Let us suppose that there exists $F=(f_1, \dots ,f_k)$ a precomplete system of generators for $J$ such that $\pi$ has a basepoint for the family of polar curves $\Pi_{F}$. Let $p$ be a basepoint of $\pi$ for the family $\Pi_{F}$.

	Assume first that $p$ is a smooth point of $E$. Denote by $E_{w}$ the irreducible component that arises from the blow-up of center $p$. By Lemma \ref{recurenceI} we get that \begin{eqnarray}\label{4.2}  q_w^{I}=q_v^{I}+\frac{1}{m_v(I)}\end{eqnarray}

	\noindent and by Theorem \ref{laplacien} applied to $E_w$ we get
	
	\begin{eqnarray}\label{4.3} 
		q_w^{J}=q_v^{J} +\frac{1+\Pi_{\alpha \beta}^* \cdot E_w}{m_v(J)}.
	\end{eqnarray}

	On the other hand, we have $I.\mathcal{O}_{X_{\pi}} = J.\mathcal{O}_{X_{\pi}}$, indeed, by Lemma \ref{integralclosure} the ideal sheaves $I.\mathcal{O}_{\overline{X_{I}}}$ and $J.\mathcal{O}_{\overline{X_{I}}}$ are equal. It follows that their pull back in $X_{\pi}$ are also equal. In particular,  $m_v(I)=m_v(J)$ and we have by hypothesis $q_v^{I}=	 q_v^{J}$ and $q_w^{I}=	 q_w^{J}$. This contradicts the equalities (\ref{4.2}) and (\ref{4.3}) and it  proves that $p$ cannot be a basepoint for the family $ \Pi_{F} $.

	Assume now that $p$ is a double point of $E$. Let  $E_{v_1}$	and $E_{v_2}$ be the irreducible components of $E$ which intersects at $p$. Let $E_w$ be the irreducible component  that arrises from the blow-up of the point $p$. Lemma \ref{recurenceI} implies that 	\begin{eqnarray}\label{4.4} 			q_w^{I}=\frac{q_{v_{1}}^{I}+q_{v_{2}}^{I}}{m_{v_{1}}(I)+m_{v_2}(I)}.
	\end{eqnarray}
	On the other hand the inner rate formula \ref{laplacien} gives
	
	\begin{eqnarray}\label{4.5} 			q_w^{J}=\frac{\Pi_{\alpha \beta}^* \cdot E_w + q_{v_{1}}^{J}+q_{v_{2}}^{J}}{m_{v_{1}}(J)+m_{v_2}(J)}.	\end{eqnarray}
	Again, the equalities (\ref{4.4}) and (\ref{4.5}) contradicts the hypothesis  $\mathcal{I}_I=\mathcal{I}_J$. We  proved that $\pi$ factors through the principalization of $\Omega^2_J$.
	
	We have the equalities $m_v(I)=m_v(J)$ and $q_v^{I}=q_v^J$ for all $v\in V(\Gamma_\pi)$ because $\mathcal{I}_I = \mathcal{I}_J$.

	$\bullet$	Conversely, if $\pi$ factors through the blow-ups of  $I$ and $J$ and the principalizations of $ \Omega_2^I$ and  $\Omega_2^{J}$ and if $q_v^{I}=q_v^J, m_v(I)=m_v(J)$ for all  vertices  $v$ of $\Gamma_{\pi}$ then it follows directly from Lemma \ref{recurenceI} that $\mathcal{I}_I=\mathcal{I}_J$. \end{proof}







\section{Geometric interpretation of the inner rates associated to an ideal}

  		Let $(X,0)$ be a complex surface germ with an isolated singularity and $I$ be an $\mathfrak{m}$-primary ideal of $\mathcal{O}_{X,0}$. The aim of this section is to give a geometric interpretation of the inner rates associated to $I$ as the inner rates of a complex surface germ with an isolated singularity embedded in $\mathbb{C}^n$ (Theorem \ref{immersion}).

Now, we are ready to state  the main result of this section which enables us to introduce in Definition \ref{completeimmersion} the notion  of \textit{complete generator system for $I$}. Recall that the skeletal metric $d_I$ was defined in Definition \ref{skeletalmetricofI}.

\begin{thm}[Theorem \ref{thmz}]\label{immersion}
	Let $(X,0)$ be a complex surface germ with an isolated singularity and $I$ be a  $\mathfrak{m}$-primary ideal of $\mathcal{O}_{X,0}$. There exists a precomplete system of generators $F=(f_1,f_2,\dots,f_k)$ of $I$ such that the holomorphic map		
	$$\begin{array}{rcl}
F: (X,0) &\to& (F(X),0) \subset (\mathbb{C}^k,0) \\
p &\mapsto &(f_1(p),f_2(p),\dots,f_k(p))
\end{array} $$
verifies the following properties:
\begin{enumerate}
\item The image $(F(X),0)$ is a complex surface germ with an isolated singularity at the origin of $\mathbb{C}^k$. \label{point111}
\item The map $F$ is a homeomorphism on its image and a modification of $(F(X),0)$. \label{point222}
\item The induced map $\tilde{F}:(\mathrm{NL}(X,0), \mathrm{d}_I) \longrightarrow (\mathrm{NL}(F(X),0),\mathrm{d}_{\mathfrak{m}_F})$ is an isometry which 	makes the following  diagram commute 
	$$ \xymatrix{
   ( \mathrm{NL}(X,0), \mathrm{d}_I) \ar[r]^{\tilde{F}}  \ar[rd]^{\mathcal{I}_{I}} & (\mathrm{NL}(F(X),0), \mathrm{d}_{\mathfrak{m}_F} ) \ar[d]^{\mathcal{I}_{\mathfrak{m}_{F}}} \\
      & \mathbb{R}_{+}^* \cup \infty
  },	$$ where $\mathfrak{m}_F$ is the maximal ideal of $\mathcal{O}_{F(X),0}$. \label{333}
  \end{enumerate}
   \end{thm}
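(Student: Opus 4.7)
The proof proceeds in three steps: constructing a convenient enlargement $F$ of a precomplete system of generators of $I$, verifying the geometric properties (1) and (2), and finally establishing the isometry and commutativity in (3).

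\textbf{Step 1 (construction of $F$).} Fix an embedding $(X,0)\subset(\mathbb{C}^s,0)$ with coordinates $z_1,\dots,z_s$ and take any precomplete system of generators $F_0=(f_1,\dots,f_n)$ of $I$, which exists by the remark following Definition~\ref{precompletegeneratorsystem}. Since $I$ is $\mathfrak{m}$-primary, there is an integer $N$ with $\mathfrak{m}^N\subset I$, so every monomial of degree $N$ or $N+1$ in $z_1,\dots,z_s$ lies in $I$. Adjoin all of these monomials to $F_0$ to obtain $F=(f_1,\dots,f_k)$. Enlarging the family cannot destroy precompleteness, because the wedges coming from $F_0$ are still present in $F$; moreover $F$ still generates $I$. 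The richness of these redundant generators lets us recover, for every $p\in X\setminus\{0\}$, the ambient coordinates $z_i(p)$ from $F(p)$ by forming ratios of monomials of degree $N+1$ by monomials of degree $N$; hence $F$ is injective on $X\setminus\{0\}$, and an analogous computation on differentials shows that $\mathrm{d}F_p$ has rank $2$ at every $p\neq 0$, so $F$ is an immersion there.

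\textbf{Step 2 (properties (1) and (2)).} As $I$ is $\mathfrak{m}$-primary, $F^{-1}(0)=\{0\}$ and $F$ is finite, hence proper, and the image $F(X)$ is a two-dimensional analytic germ in $(\mathbb{C}^k,0)$. By Step~1, $F|_{X\setminus\{0\}}$ is a finite injective immersion of the smooth surface $X\setminus\{0\}$, so it is a biholomorphism onto its open image $F(X)\setminus\{0\}$; in particular $F(X)\setminus\{0\}$ is smooth, which gives (1). Combining properness with this biholomorphism off the origin shows that $F:(X,0)\to(F(X),0)$ is a proper bimeromorphism, that is, a modification, and it is a homeomorphism onto its image; this proves (2).

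\textbf{Step 3 (property (3)).} Since $F$ is a finite modification, Proposition~\ref{onetoonearchimede} gives that $\tilde{F}:\mathrm{NL}(X,0)\to\mathrm{NL}(F(X),0)$ is a bijection. Pick a good resolution $\pi:(X_\pi,E)\to(X,0)$ factoring through $\mathrm{BL}_I$ and through the principalization of $\Omega_I^2$. Because $F$ contracts no curve, the composite $F\circ\pi$ is a good resolution of $(F(X),0)$ with the same exceptional divisor $E$ and the same dual graph $\Gamma_\pi$. The universal property of the blow-up (Proposition~\ref{hironakablowupideal}) combined with the identities $F^{\#}(\mathfrak{m}_F)\cdot\mathcal{O}_{X,0}=I$ and $F^{*}\Omega^2_{\mathfrak{m}_F}\subset\Omega^2_I$ ensures that $F\circ\pi$ also factors through $\mathrm{BL}_{\mathfrak{m}_F}$ and the principalization of $\Omega^2_{\mathfrak{m}_F}$. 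A direct computation using $\mathrm{d}(h\circ F)=F^{*}\mathrm{d}h$ then yields, for every vertex $v$ of $\Gamma_\pi$,
\[ m_v(I)=m_v(\mathfrak{m}_F)\quad\text{and}\quad\nu_v(I)=\nu_v(\mathfrak{m}_F), \]
hence $q_v^I=q_v^{\mathfrak{m}_F}$. Consequently $\tilde{F}$ sends the divisorial valuation associated with $E_v$ on $X$ to that on $F(X)$, preserves the edge lengths $1/(m_v m_{v'})$ of the dual graph, and satisfies $\mathcal{I}_I(v)=\mathcal{I}_{\mathfrak{m}_F}(\tilde{F}(v))$ on all divisorial valuations. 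Density of divisorial valuations (Remark~\ref{densiteremarque}) together with the continuity and piecewise linearity of both inner-rate functions (Proposition~\ref{inner-rate functionI}) extends these identities to an isometry $\tilde{F}$ for which the diagram in (3) commutes.

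\textbf{Principal obstacle.} The delicate point is Step~1: producing a single family $F$ which is simultaneously a system of generators of $I$, precomplete, and induces an injective immersion on $X\setminus\{0\}$ so that $F(X)$ has an isolated singularity at the origin. The Veronese-type enlargement works because $\mathfrak{m}^N\subset I$, but care must be taken at points of $X\setminus\{0\}$ where several ambient coordinates vanish and at the verification that no further non-isolated singularity appears on $F(X)\setminus\{0\}$. Once $F$ is built, property (3) unravels neatly from the definition of the inner rates together with the identity $F^{\#}(\mathfrak{m}_F)\cdot\mathcal{O}_{X,0}=I$.
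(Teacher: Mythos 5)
Your proof is correct and follows the same overall architecture as the paper's: build a complete system of generators by enlarging a precomplete one, use properness together with Remmert's theorem to see that $F(X)$ is analytic with an isolated singularity, and then transport the skeletal data through $\tilde F$ using the identity $F^{\#}(\mathfrak{m}_F)\cdot\mathcal{O}_{X,0}=I$. The only place where you genuinely diverge is Step~1. The paper works harder there, in two separate pieces: it first proves that the map $G=(f_1,\dots,f_k)$ built from the \emph{original} precomplete system is already an immersion on $X\setminus\{0\}$ (a nice standalone fact: the argument uses the powers $(a\ell_i+b\ell_m)^n,(c\ell_j+d\ell_m)^n\in I$ and precompleteness to force a nonvanishing $2$-form), and it separately appends only the $kr$ products $f_i\ell_j$ to gain injectivity, deducing the local biholomorphism by an explicit ratio map $h$. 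Your Veronese-style enlargement — throwing in all ambient monomials of degrees $N$ and $N+1$ — buys injectivity \emph{and} the immersion property in one stroke, bypassing the paper's argument that precompleteness alone already forces $G$ to be an immersion, at the cost of a larger embedding dimension $k$ and of not exhibiting that finer fact. Both constructions are valid; the paper's is tighter and reveals more, yours is more mechanical and arguably easier to verify.

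A couple of small points worth making explicit if you were to write this up in full. In Step~2 you should cite Remmert's proper mapping theorem (Theorem~\ref{Grauert}) for the analyticity of $F(X)$, as the paper does. In Step~3 the phrase ``a direct computation yields $m_v(I)=m_v(\mathfrak{m}_F)$ and $\nu_v(I)=\nu_v(\mathfrak{m}_F)$'' conceals a short but non-trivial argument: the inequalities $m_v(\mathfrak{m}_F)\ge m_v(I)$ and $\nu_v(\mathfrak{m}_F)\ge \nu_v(I)$ follow from $F^{\#}(\mathfrak{m}_F)\subset I$, and the reverse inequalities need Lemma~\ref{mI} and Lemma~\ref{nu} (or the paper's Proposition~\ref{inner-rate-ideal}) to guarantee that the infima defining $m_v(I)$ and $\nu_v(I)$ are already attained on \emph{linear} combinations $F_\alpha=\sum\alpha_i f_i=(\sum\alpha_i z_i)\circ F$, which do lie in $F^{\#}(\mathfrak{m}_F)$. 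This is exactly why precompleteness of the final family $F$ must be maintained through your enlargement — and your remark that adjoining elements of $I$ cannot destroy precompleteness handles this correctly.
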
 \begin{rema}\label{remarkokanormalization}
If we assume in the statement of Theorem \ref{immersion}	 that the germ $(X,0)$ is normal then the morphism $F$ is the normalization of the complex surface germ $(F(X),0)$ because it is, in particular, a finite modification. \end{rema}

\begin{defi}\label{completeimmersion}
	We will say that a system of generators for $I$ is \textbf{complete} if it verifies  the properties \ref{point111}, \ref{point222} and \ref{333} of Theorem \ref{immersion}.	\end{defi}  This notion is a strengthening of the notion of \emph{precompleteness} introduced in Definition \ref{precompletegeneratorsystem}.   

Given a complex surface germ $(X,0)$ with an isolated singularity, Theorem \ref{immersion} enables us to construct complex surface germs homeomorphic to $(X,0)$ whose metric invariants (inner rates of complex surface germs) are prescribed by an ideal of $\mathcal{O}_{X,0}$. Besides providing a nice geometric interpretation for the inner rates associated with an ideal, we will later see that it is a powerful tool for explicitly constructing many complex surface germs that are homeomorphic to a given one but have distinct metric types.

In order to prove Theorem \ref{immersion} we also need the following theorem of Remmert which gives us a sufficient condition for the image of a complex space by a holomorphic map to be a complex space.
	\begin{thm}[{\cite[Remmert's proper mapping theorem]{Demailly}}]\label{Grauert}
	
			Let $f:X \longrightarrow Y$ be a proper holomorphic map. Then $f(X)$ is an analytic set embedded in $Y$.
	\end{thm}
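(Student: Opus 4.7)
The plan is to prove Remmert's proper mapping theorem by a two-step reduction: first, reduce to the local study of $f$ near a single fiber, and second, reduce to the case of a finite holomorphic map, for which the image can be cut out explicitly by holomorphic functions.

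The first step is localization. Being an analytic subset is a local property, so I fix a point $y_0 \in Y$ and work in a small neighborhood. The key exploitation of properness is that $f^{-1}(y_0)$ is a \emph{compact} analytic subset of $X$. By compactness it can be covered by finitely many relatively compact open neighborhoods $U_1,\dots,U_k$, and again by properness one can find a small open $V \ni y_0$ such that $f^{-1}(V) \subset U_1 \cup \cdots \cup U_k$. Since a finite union of analytic subsets of $V$ is analytic, it suffices to prove that $f(f^{-1}(V) \cap U_i)$ is analytic in $V$ for each $i$. This localizes the problem to the behaviour of $f$ near a single point $x \in f^{-1}(y_0)$ and of $V$ near $y_0$.

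The second step is the reduction to finite maps. Near $x$, embed $(X,x)$ in some $(\mathbb{C}^n,0)$ and use the local parametrization theorem for analytic germs to choose a projection of the source adapted to $f$: by a Noether-normalization-type argument one factors $f$ in a neighborhood of $x$ as a finite holomorphic map onto its image followed by an inclusion into $V$. One then invokes the finite case: the image of a finite holomorphic map is an analytic set. This follows from the Weierstrass preparation theorem, which lets one eliminate variables and produce an explicit finite list of holomorphic equations defining the image, using the elementary symmetric functions of the roots in each finite fibre to obtain holomorphic data on the target.

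The main obstacle, and what separates the easy case from the general case, is the rank reduction: $f$ need not have constant rank, so the above local factorization does not apply uniformly. The standard remedy is to stratify $X$ by the rank of the differential of $f$ (or, more carefully, by the Remmert rank on each irreducible component): on the open stratum of maximal rank, the holomorphic rank theorem shows that $f$ is locally a submersion onto a smooth piece of its image, while the lower rank locus is a proper analytic subset of $X$ and can be handled by induction on $\dim X$. The delicate point is then to assemble these local and stratum-wise descriptions into a single analytic subset of $V$, which again relies essentially on the properness of $f$ to ensure closedness of the image and to make the induction close.
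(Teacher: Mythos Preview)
The paper does not prove this statement at all: Theorem~\ref{Grauert} is quoted from \cite{Demailly} as a classical input and used as a black box in the proof of Theorem~\ref{immersion}. There is therefore no ``paper's own proof'' to compare your attempt against.

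On the substance of your sketch: the overall architecture --- reduce to the finite case, handle that case by Weierstrass preparation and elimination, and treat the non--maximal-rank locus by induction on $\dim X$ --- is indeed one of the standard routes. Two points deserve tightening, though. First, your localization step loses properness: once you restrict $f$ to $f^{-1}(V)\cap U_i$, the resulting map is in general \emph{not} proper, and the fiber $f^{-1}(y_0)$ may well be positive-dimensional, so the phrase ``localizes the problem to the behaviour of $f$ near a single point $x\in f^{-1}(y_0)$'' is not justified as written. The usual fix is not to chop the source into pieces but rather to choose, near $y_0$, a generic linear projection $\pi:Y\supset V\to\mathbb{C}^r$ (with $r$ the image dimension) so that $\pi\circ f$ has discrete fibers off a proper analytic subset of $X$; properness of $f$ then makes $\pi\circ f$ finite on a suitable neighborhood, and the image of $f$ is recovered as an analytic set fibered over $\mathbb{C}^r$. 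Second, the sentence ``factors $f$ \ldots\ as a finite holomorphic map onto its image followed by an inclusion'' is circular: the image is exactly what you are trying to show is analytic, so you cannot yet speak of a holomorphic map \emph{onto} it.
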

		\begin{exam}
		For example, one can define the Whitney umbrella as the image of $\mathbb{C}^2$ by the map $(x,y) \in \mathbb{C}^2 \mapsto (xy,x,y^2)$ and whose equation is $z_1^2=z_2^2z_3$. But if we choose a non proper holomorphic map $(x,y) \in \mathbb{C}^2 \mapsto (xy,y^2)$ then the image  $\mathbb{C}^2 - \{(x,0) \ |  x \neq 0\}$ is dense thus not an analytic set embedded in $\mathbb{C}^2$.
	\end{exam}

	\begin{proof}[Proof of Theorem \ref{immersion}] Let $(f_1,\dots,f_k)$ be a precomplete system of generators for $I$.
	
	$\bullet$  We will first prove that the morphism $$\begin{array}{rcl}
G: (X ,0) &\to& (\mathbb{C}^k,0) \\
p &\mapsto &(f_1(p),f_2(p) \dots                                                                                                                                                             ,f_k(p))
\end{array} $$ is an immersion on $X \backslash 0$.

 Assume that $G$ is not an immersion. It follows that there exists a point $p$ of $X \backslash 0$ such that $\mathrm{d}f_i \wedge \mathrm{d}f_j(p)=0$ for all $i,j=1, \dots, k$.

 Let $(\ell_1,\dots,\ell_r)$ be a system of generators for the maximal ideal $\mathfrak{m}$ of $\mathcal{O}_{X,0}$. Therefore the following morphism is an embedding  $$\begin{array}{rcl}
\ell: (X ,0) &\to& (\mathbb{C}^r,0) \\
p &\mapsto &(\ell_1(p),\ell_2(p), \dots                                                                                                                                                             ,\ell_r(p))
\end{array}. $$  Since $\ell$ is in particular a biholomorphism on its image there exists  $i,j,m \in \{1, \dots ,r\}$ such that $ \ell_m(p) \neq 0$ and $\mathrm{d}\ell_i \wedge \mathrm{d}\ell_j (p) \neq 0  $.\\

By hypothesis, there exists a natural number $n$ such that $\mathfrak{m}^n \subset I$. Consider the functions $F_{im}^{(ab)}=(a\ell_i+b \ell_m)^n$ and $F_{jm}^{(cd)}=(c\ell_j+d \ell_m)^n$, where $a,b,c,d \in \mathbb{C}$. Since  $F_{im}^{(ab)}$ and $F_{jm}^{(cd)}$ are elements of $I$ then $\omega=\mathrm{d}F_{im}^{(ab)} \wedge \mathrm{d}F_{jm}^{(cd)}$ is an element of $\Omega_I^2$
 $$\omega=n^2(a\ell_i+ b \ell_m)^{n-1}( c\ell_j+ d \ell_m )^{n-1}\left(ac (\mathrm{d}\ell_i \wedge\mathrm{d} \ell_j) +ad(\mathrm{d}\ell_i \wedge\mathrm{d} \ell_m)+bc(\mathrm{d}\ell_m \wedge\mathrm{d} \ell_j)\right).$$ Let us choose $a,b,c$ and $d$ such that $\omega(p) \neq 0$. By assumption, $(f_1, \dots, f_k)$ is a precomplete system of generators for $I$ hence there exists a family of  functions $\{\psi_{ij}\}_{1 \leq i,j\leq k}$ such that $$\omega= \sum_{1 \leq i,j\leq k} \psi_{ij}\mathrm{d}f_i \wedge \mathrm{d}f_j.$$ This last equality cannot happen if $\mathrm{d}f_i \wedge \mathrm{d}f_j(p)=0$ for all $1 \leq i,j\leq k$. This contradiction shows that the map $G$ is an immersion on $X \backslash 0$.\\

But the map $G$ is not necessarily injective,  let us  then consider the following holomorphic map  
$$\begin{array}{rcl}
F: (X ,0) &\to& (\mathbb{C}^{k+kr},0) \\
p &\mapsto &(f_1(p),f_2(p) \dots                                                                                                                                                             ,f_k(p),f_1(p)\ell_1(p),f_1(p)\ell_2(p),\\
& &\dots,f_1(p)\ell_r(p), \dots,f_k(p) \ell_1(p),\dots, f_k(p) \ell_r(p) ).\end{array} 
$$

 We can directly check that $F$ is injective by using the fact that $(\ell_1(p_1), \dots, \ell_r(p_1)) \neq (\ell_1(p_2), \dots, \ell_r(p_2))$ for all $p_1 \neq p_2$ in $X$ . 
 
$\bullet$ Let us prove that $F$ is proper. For that, first we prove that $F$ is a local biholomorphism from $X \backslash 0$ to $F(X) \backslash 0$ . Let $p_0$ be a non singular point of $X$. There exists $i_0 \in \{1,\dots,k\}$ such that $f_{i_0}(p_0) \neq 0$ because the ideal $I$ is $\mathfrak{m}$-primary. Denote by $U_{0}$ an open neighborhood of $p_0$ such that $f_{i_0}$ does not vanish on $U_0$. Let us consider the holomorphic map $$h:\mathbb{C}^{k+rk} \backslash \{ z_{i_0} \neq 0\}\longrightarrow \mathbb{C}^r,$$ $$ h(z_1, \dots, z_k, z_{1,1},z_{1,2}, \dots,z_{1,r},\dots,z_{k,1},\dots,z_{k,r})=\left(\frac{z_{i_{0},1}}{z_{i_0}}, \dots , \frac{z_{i_{0},r}}{z_{i_0} }\right).$$ 
The composed map $h \circ F_{|U_0}(p)=(\ell_1(p),\dots, \ell_r(p))=\ell_{|U_0}$ is a biholomorphism from $U_0$ to $\ell(U_0)$ by definition of $\ell$. We have the equality $F^{-1}_{|F(U_0)}=(\ell^{-1} \circ h)_{| F(U_0)}$ which implies that $F_{|U_0}$ is a biholomorphism on its image. It follows that $F_{|X\backslash 0}:X\backslash 0 \longrightarrow F(X) \backslash 0$ is a biholomorphism. The map $F$ is continuous at $0$ because it is holomorphic. It implies that $F$ is a proper homeomorphism and by Theorem \ref{Grauert} we get that $(F(X),0)$ is a complex surface germ with an isolated singularity. We proved that the system of generators $F_I=(f_1,f_2 \dots                                                                                                                                                             ,f_k,f_1\ell_1,f_1\ell_2,\dots,f_1\ell_r,\dots,f_k \ell_1,\dots, f_k \ell_r)$ of the ideal $I$ is complete.

$\bullet$	Now, let us study the induced map \(\tilde{F}: \mathrm{NL}(X,0) \longrightarrow \mathrm{NL}(F(X),0)\) defined on the non-archimedean link of \((X,0)\). Since \(F\) is also a modification, the comorphism \(F^*:\mathrm{Frac}(\mathcal{O}_{F(X),F(p)}) \longrightarrow \mathrm{Frac}(\mathcal{O}_{X,p})\) is an isomorphism. Therefore, the map \(\tilde{F}\) is, by Proposition \ref{onetoonearchimede}, a one-to-one continuous map.

		Let us prove that $\mathcal{I}_{\mathfrak{m}_{F}} \circ \tilde{F}=\mathcal{I}_{I}$.  Let $val$ be a divisorial valuation of $\mathrm{NL}(X,0)$. Let $\pi:(X_{\pi},E) \longrightarrow (X,0)$ be a good resolution such that there exists an irreducible component $E_v$ of $E$ whose associated divisorial valuation is $val$. The image of $val$ by  $\tilde{F}$ is the valuation $$\tilde{F}(val)(h)= \frac{m_v(h \circ F \circ \pi)}{m_v(\mathfrak{m})} \frac{1}{val(F^*\mathfrak{m}_F)}=\frac{m_v(h \circ F \circ \pi)}{m_v(I)}, h \in \mathcal{O}_{F(X),0},$$ because $$val(F^*\mathfrak{m}_F)=val(I)=\frac{m_v(I)}{m_v(\mathfrak{m})}.$$ On the other hand, since $F$ is a modification, the map $\pi_F=F \circ \pi :(X_{\pi},E) \longrightarrow (F(X),0)$ is a good resolution of $(F(X),0)$. Denote by $val_F$ the divisorial valuation of $\mathrm{NL}(F(X),0)$ associated to the irreducible component $E_v$ of $E$. Let $h$ be an element of $\mathcal{O}_{F(X),0}$, we have \begin{eqnarray}\label{13003} val_F(h)=\frac{m_v(h \circ F \circ \pi)}{m_v(m_{F})}=\tilde{F}(val)(h), \end{eqnarray} because \begin{eqnarray}\label{13000} m_v(\mathfrak{m}_{F})=\mathrm{inf}\{\mathrm{Ord}_{E_v}(h \circ F \circ \pi) \ | \ h \in  \mathfrak{m}_F\} =m_v(I).\end{eqnarray}


		Furthermore, we have \begin{eqnarray}\label{13002} \nu_v(\mathfrak{m}_{F})=\mathrm{inf}\{\mathrm{Ord}_{E_v}(\pi_F^*(\mathrm{d}g\wedge  \mathrm{d}f)) \ | \ g,f \in  \mathfrak{m}_F\} =\nu_v(I),\end{eqnarray}				
		because  $F^{*}\mathfrak{m}_F=I$. It follows from  (\ref{13003}), (\ref{13000}) and (\ref{13002}) that $$ \mathcal{I}_{\mathfrak{m}_F}\circ \tilde{F}(val)=\mathcal{I}_{\mathfrak{m}_F}(val_F) =q_{v}^{\mathfrak{m}_F}=\frac{\nu_v(\mathfrak{m}_F)-m_v(\mathfrak{m}_F)+1}{m_v(\mathfrak{m}_F)}=\mathcal{I}_{I}(val),$$ because $\nu_v(\mathfrak{m}_F) =\nu_v(I)$ and $m_v(\mathfrak{m}_F) =m_v(I)$.

		 It remains to prove that $\tilde{F}:(\mathrm{NL}(X,0),\mathrm{d}_{I}) \longrightarrow (\mathrm{NL}(F(X),0),\mathrm{d}_{\mathfrak{m}_F})$ is an isometry. Let $v$ and $v'$ be two adjacent vertices of $\Gamma_{\pi}$. From what we done before, the image of the edge $e_{v,v'}$ by $\tilde{F}$ is again the edge $e_{v,v'}$ and we have:
		
		$$\mathrm{d}_{I}(v,v')=\frac{1}{m_v(I)m_{v'}(I)}=\frac{1}{m_v(\mathfrak{m}_F)m_{v'}(\mathfrak{m}_F)}=\mathrm{d}_{\mathfrak{m}_F}(v,v').$$ It follows that $\tilde{F}$ is an isometry on the divisorial valuations and it follows by their density in $\mathrm{NL}(X,0)$ that it is an isometry. \end{proof}

	We end this section with an example which shows that a precomplete system of generators for an ideal is not necessarily complete.
	\begin{exam}
		We have seen  in Example \ref{precompleteexample} that the set $G=(x^2,y^2,xy^2,x^2y)$ is a precomplete system of generators of the ideal   $I:=(x^2,y^2)$ of $\mathcal{O}_{\mathbb{C}^2,0}$. But $G$ is not complete. Indeed, the morphism $(x,y) \in \mathbb{C}^2 \mapsto (x^2,y^2,xy^2,x^2y)$ is not injective because the points $(1,0)$ and $(-1,0)$ are both sent to the point $(1,0,0,0)$.

		The set of generators $F=(x^2,y^2,xy^2,x^2y,x^3,y^3)$ is a complete set of generators for $I$ and the image of the morphism $(x,y) \in \mathbb{C}^2 \mapsto (x^2,y^2,xy^2,x^2y,x^3,y^3) \in \mathbb{C}^6$ is a complex surface germ $(X_I,0)$ embedded in $\mathbb{C}^6$ with an isolated singularity at the origin. \end{exam}

		\section{Application to polar exploration and outer Lipschitz geometry}
	Now we will apply Theorem \ref{immersion} to polar exploration of complex surface germs in the sense of the paper by Belotto, Fantini, N\'emethi, and Pichon in \cite{BFNP}. 

Let $(X,0)$ be a complex surface germ embedded in $(\mathbb{C}^k,0)$ and consider $\pi:(X_{\pi},E) \longrightarrow (X,0)$ a good resolution of $(X,0)$, and denote by $E_{v_1}, \dots, E_{v_n}$ the irreducible components of $E$. Let $\ell=(\ell_1,\ell_2):(X,0) \subset (\mathbb{C}^k,0) \longrightarrow (\mathbb{C}^2,0)$ be a generic projection with respect to $\pi$ in the sense of Definition \ref{genericBFP}. Denote by $\Pi$ the generic polar curve of $\ell$. As in \cite{BFNP}, we associate to $\pi$ a triplet $(\Gamma_{\pi},L_{\pi},P_{\pi})$, where $\Gamma_{\pi}$ is the dual graph of $\pi$, and vectors $L_{\pi}:=(\ell_{1}^* \cdot E_{v_i})_{1 \leq i \leq n}$ and $P_{\pi}:=(\Pi^* \cdot E_{v_i})_{1 \leq i \leq n}$ with $\ell_{1}^*$ and $\Pi^*$ being, respectively, the strict transforms of $\ell_{1}^{-1}(0)$ and the polar curve $\Pi$ by $\pi$. Belotto, Fantini, N\'emethi, and Pichon proved in \cite{BFNP} the following theorem.

	\begin{thm}[{\cite[Theorem A]{BFNP}}]\label{explorationpolairenemethi}
	Let $M$ be a real $3$-manifold. There exists finitely many triples $(\Gamma,L,P)$, where $\Gamma$ is a weighted graph and $L$ and $P$ are vectors in $(\mathbb{Z}_{\geq 0})^{V(\Gamma)}$, such that there exists a normal complex surface germ $(X,0)$  satisfying the following conditions:
	\begin{enumerate}
		\item The link of $(X,0)$ is homeomorphic to M.
		\item $(\Gamma,L,P)=(\Gamma_{\pi},L_{\pi},P_{\pi})$, where $\pi:(X_{\pi},E) \longrightarrow (X,0)$ is the minimal good resolution which factors through the blow-up of the maximal ideal and the Nash transform of $(X,0)$.
	\end{enumerate}
		
	\end{thm}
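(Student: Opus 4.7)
My plan proceeds in three steps, tracking $\Gamma$, $L$, and $P$ separately. First, by Neumann's classical plumbing theorem, the homeomorphism type of the link $M$ determines the weighted dual graph $\Gamma_{\min}$ of the minimal good resolution of any normal complex surface germ with link $M$. The graph $\Gamma_{\pi}$ of the minimal good resolution factoring through the blow-up of $\mathfrak{m}$ and the Nash transform is obtained from $\Gamma_{\min}$ by finitely many blow-ups at smooth or double points of the exceptional divisor, so once $L_{\pi}$ and $P_{\pi}$ are bounded as integer vectors their supports pin down the centers of these additional blow-ups, and only finitely many graphs $\Gamma_{\pi}$ will remain possible.

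Second, I would bound $L_{\pi}$ in terms of $\Gamma_{\min}$. From the identity $(f \circ \pi)\cdot E_v = 0$ for a generic linear form $f$, one has $L_{\pi} = - M_{\pi} \cdot Z_{\mathfrak{m}}$, where $Z_{\mathfrak{m}} := (m_v(\mathfrak{m}))_v$ is the maximal-ideal cycle: a positive anti-nef cycle dominating the fundamental cycle of $\Gamma_{\min}$. Classical Artin/Laufer/Wahl-type numerical constraints on such cycles (via the geometric genus, the arithmetic genus, or a direct comparison with the fundamental cycle) furnish an a priori topological upper bound on the coordinates of $Z_{\mathfrak{m}}$; in the rational case this is immediate since $Z_{\mathfrak{m}} = Z_{\mathrm{fund}}$. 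Consequently $L_{\pi}$ takes only finitely many values.

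Third, I would bound $P_{\pi}$ via the inner-rate formula. Fix a generic linear projection $\ell = (\ell_1, \ell_2)$ with respect to $\pi$; by Remark \ref{maximalidealrate} the inner rate $q_{\ell_1, v}^{\ell_2}$ coincides with the inner rate $q_v^{\mathfrak{m}}$ of the maximal ideal along $E_v$, and Theorem \ref{laplacien} then gives
\[
P_{\pi} \;=\; K_{\pi} + L_{\pi} - M_{\pi} \cdot \bigl( m_v(\mathfrak{m})\, q_v^{\mathfrak{m}} \bigr)_v.
\]
The rationals $q_v^{\mathfrak{m}}$ have denominators dividing $m_v(\mathfrak{m})$, and by the propagation rules of Lemma \ref{recurenceI} their values grow in a controlled arithmetic fashion along bamboos, starting from vertices at which they are pinned by the already-bounded data $(\Gamma_{\pi}, L_{\pi})$. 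This forces $(q_v^{\mathfrak{m}})_v$, and hence $P_{\pi}$, into a finite set.

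The crux of the argument is step two: the topological upper bound on the maximal-ideal cycle is automatic in the rational case by Artin's identification with the fundamental cycle, but in the general normal setting it requires a delicate geometric-genus or multiplicity estimate, uniform in all analytic representatives of a given topological type. This is where the real content of the theorem lies, the first and third steps being a combination of classical reconstruction results and of the inner-rate formalism already developed in the paper.
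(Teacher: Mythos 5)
This statement is Theorem A of \cite{BFNP}, which the present paper simply cites and does not reprove, so there is no internal proof to compare your attempt against; I can only assess your argument on its own terms.

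Your three-step decomposition (finiteness of $\Gamma_\pi$, then of $L_\pi$, then of $P_\pi$) is a sensible shape, and you are right to flag the topological bound on the maximal-ideal cycle $Z_{\mathfrak{m}}$ as a genuine difficulty: this is indeed not a formality, and ``Artin/Laufer/Wahl-type constraints'' do not hand it to you off the shelf for general normal $(X,0)$; an actual argument is needed. But there is a second, unacknowledged gap in your third step, and it is the more serious one. You claim the inner rates $q_v^{\mathfrak{m}}$ start from values ``pinned by the already-bounded data $(\Gamma_\pi,L_\pi)$'' and then propagate. This is not so. The inner rates at the core vertices of $\Gamma_{\min}$ are \emph{analytic} invariants of $(X,0)$ and are not functions of the pair $(\Gamma_\pi,L_\pi)$; only at the $\mathcal{L}$-nodes (where $\ell^*$ meets $E$) is the value fixed at $1$. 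Lemma \ref{recurenceI} propagates inner rates across blow-ups performed \emph{away} from the strict transforms $F_\alpha^*$, $F_\beta^*$, $\Pi_{\alpha\beta}^*$, so it governs the bamboos created between $\Gamma_{\min}$ and $\Gamma_\pi$, but it gives no information about the inner rates at the original node vertices. And the only other tool you invoke, the inner-rate formula $M_\pi\cdot\underline{a} = \underline{K_\pi} + \underline{L_\pi} - \underline{P_\pi}$, has $P_\pi$ on the right-hand side: since $M_\pi$ is invertible, knowing $(\Gamma_\pi, L_\pi)$ and $\underline{a}$ is \emph{equivalent} to knowing $(\Gamma_\pi, L_\pi)$ and $P_\pi$. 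So to ``force $(q_v^{\mathfrak{m}})_v$ into a finite set'' and thereby conclude $P_\pi$ is bounded is circular --- you would need to bound one of the two independently, and your proposal bounds neither. A topological bound on the inner rates (equivalently, on $P_\pi$) is a second, independent crux of the theorem, not a corollary of steps one and two together with the formalism of Lemma \ref{recurenceI}. Without it the argument does not close.
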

	
	A natural question to ask is if the statement of Theorem {\cite[Theorem A]{BFNP}} still holds  if we do not restrict ourselves to normal surface singularities?  Here, we give a negative answer to this question with the following result stated as Theorem \ref{thmM} in the introduction.
	  
	  We will need the notion of integral closure of ideals to state our result.

\begin{defi} \label{integralclosuredefi222}
Let $I$ be an ideal of a commutative ring $R$. The \textbf{integral closure of $I$} is the ideal, denoted $\bar{I}$, of all elements  $r$ in $R$ such that there exists $a_i \in I^{i}$ satisfying the equality 
		$$ r^n+a_1r^{n-1}+ \dots + a_{n-1}r+a_{1}=0.$$
	\end{defi}

\begin{thm}[{Theorem \ref{thmM}}]\label{belottofantininemethipichonbis}\label{belotonamathifantinipichon}		Let $(X,0)$ be a normal complex surface germ. Let $\Lambda_X$ be the infinite set of  integrally closed $\mathfrak{m}$-primary ideals of $\mathcal{O}_{X,0}$ .	There exists a family of complex surface germs with an isolated singularity $\{(X_I,0)\}_{I \in \Lambda_X}$ such that

	\begin{enumerate}
		\item $(X,0)$ is homeomorphic to $(X_I,0)$ for all $I \in \Lambda_X$ and is the common normalization of the germs $\{(X_I, 0)\}_{I \in \Lambda_X}$.	
		
		\item For all $I,J$ in $\Lambda_X$ we have that $ (\Gamma_{\pi_I},L_{\pi_I},P_{\pi_I}) = (\Gamma_{\pi_J},L_{\pi_J},P_{\pi_J})$ if and only if  $I=J$, where $\pi_I$ and $\pi_J$ are respectively the minimal good resolutions which factors through the blow-ups of the maximal ideal and the Nash transform of $(X_I,0)$ and $(X_J,0)$. \end{enumerate}

\end{thm}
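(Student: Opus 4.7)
The plan is to construct the family $\{(X_I,0)\}_{I \in \Lambda_X}$ by applying Theorem \ref{immersion} to each $I$, and then to distinguish the associated triples $(\Gamma_{\pi_I},L_{\pi_I},P_{\pi_I})$ by combining the inner rate formula (Theorem \ref{laplacien}) with Lemma \ref{integralclosure}. For each $I \in \Lambda_X$, I would pick a complete system of generators $F_I=(f_1,\dots,f_k)$ of $I$, whose existence is granted by Theorem \ref{immersion}, and set $(X_I,0):=(F_I(X),0)\subset(\mathbb{C}^k,0)$. By Theorem \ref{immersion}, $(X_I,0)$ has an isolated singularity and $F_I:(X,0)\to(X_I,0)$ is a homeomorphic modification; together with the normality of $(X,0)$ and Remark \ref{remarkokanormalization}, this makes $(X,0)$ the common normalization of all the $(X_I,0)$, proving point (1). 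The infinitude of $\Lambda_X$ is immediate, e.g.\ because the integral closures $\overline{\mathfrak{m}^n}$, $n\ge1$, are pairwise distinct integrally closed $\mathfrak{m}$-primary ideals (their colengths tend to infinity).

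For point (2), I would first transport everything from $(X_I,0)$ back to $(X,0)$ via the isometry $\tilde F_I$ of Theorem \ref{immersion}(\ref{333}). This isometry identifies the minimal good resolution $\pi_I$ of $(X_I,0)$ factoring through the blow-up of $\mathfrak{m}_I$ and the Nash transform of $(X_I,0)$ with the minimal good resolution of $(X,0)$ factoring through $\mathrm{BL}_I$ and the principalization of $\Omega_I^2$. Under this identification one has $L_{\pi_I}=(m_v(I))_v$, while $P_{\pi_I}$ coincides with the intersection vector of the strict transform of the polar curve $\Pi_{\alpha,\beta}$ with the exceptional components for a generic $(\alpha,\beta) \in V_I^{\pi}$. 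Applying Theorem \ref{laplacien} to the morphism $\Phi_{\alpha,\beta}=(F_\alpha,F_\beta)$ then yields
$$M_{\pi_I}\cdot (m_v(I)\,q_v^I)_{v\in V(\Gamma_{\pi_I})} \;=\; K_{\pi_I}+L_{\pi_I}-P_{\pi_I},$$
where $M_{\pi_I}$ and $K_{\pi_I}$ depend only on the weighted graph $\Gamma_{\pi_I}$. Hence, once $(\Gamma_{\pi_I},L_{\pi_I})$ is fixed, the divisorial inner rates $(q_v^I)_v=(\mathcal{I}_I(v))_v$ and the polar vector $P_{\pi_I}$ are equivalent data, and Proposition \ref{inner-rate functionI} extends this to the conclusion that the entire continuous function $\mathcal{I}_I$ is completely determined by the triple $(\Gamma_{\pi_I},L_{\pi_I},P_{\pi_I})$.

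To conclude, suppose $(\Gamma_{\pi_I},L_{\pi_I},P_{\pi_I})=(\Gamma_{\pi_J},L_{\pi_J},P_{\pi_J})$ for two ideals $I,J\in\Lambda_X$. By the preceding paragraph, $\mathcal{I}_I=\mathcal{I}_J$. Lemma \ref{integralclosure} then forces the normalized blow-ups $\overline{\mathrm{BL}_I}$ and $\overline{\mathrm{BL}_J}$ to be isomorphic and $I\cdot\mathcal{O}_{\overline{X_I}}=J\cdot\mathcal{O}_{\overline{X_I}}$; by the classical characterization of the integral closure of an ideal via its normalized blow-up, this implies $\overline{I}=\overline{J}$, and since $I,J$ are integrally closed by hypothesis we get $I=J$. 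The converse is immediate, so point (2) follows. The main technical obstacle is the careful identification, through $\tilde F_I$, of the resolution $\pi_I$ of $(X_I,0)$ with the corresponding resolution of $(X,0)$ and of the vectors $L_{\pi_I}$, $P_{\pi_I}$ with the data $(m_v(I))_v$ and the generic polar intersections computed on $(X,0)$; this is what allows Theorem \ref{laplacien} to be invoked with the correct multiplicities and inner rates associated to $I$.
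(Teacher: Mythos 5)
Your overall strategy coincides with the paper's: construct $(X_I,0)$ via Theorem \ref{immersion}, show that the triple $(\Gamma_{\pi_I},L_{\pi_I},P_{\pi_I})$ is, via the inner rate formula, equivalent data to the inner rate function $\mathcal{I}_I$, and conclude by showing that $\mathcal{I}_I=\mathcal{I}_J$ forces $\overline{I}=\overline{J}$, hence $I=J$ for integrally closed ideals. Your last step is essentially a re-derivation of Proposition \ref{integralclosure2}, which is fine.

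The genuine gap is your assertion that the isometry $\tilde F_I$ ``identifies'' the minimal good resolution $\pi_I$ of $(X_I,0)$ factoring through $\mathrm{BL}_{\mathfrak{m}_I}$ and the Nash transform with the minimal good resolution of $(X,0)$ factoring through $\mathrm{BL}_I$ and the principalization of $\Omega_I^2$. This is not a formal consequence of $\tilde F_I$ being an isometry of non-archimedean links: the blow-up of an ideal and the Nash transform are characterized (Proposition \ref{hironakablowupideal}, Theorem \ref{spivaknash}) as minimal modifications with no basepoints for specific families of curves, and that basepoint-freeness must be checked on the nose, not read off from the skeletal metric or from the function $\mathcal{I}_I$. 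The paper devotes the bulk of its proof to exactly this: it first checks that $F_I\circ\pi_I$ has no basepoints for the families of generic hyperplane sections and generic polar curves of $(X_I,0)\subset(\mathbb{C}^n,0)$, by pulling them back along $F_I$ to the families $L_F$ and $\Pi_F$ on $X$; it then establishes minimality by assuming a smaller resolution $\sigma_I$ of $(X_I,0)$, factoring it through the normalization $F_I$ to get a resolution $r_I:(Y_I,Z_I)\to(X,0)$, and showing via a precompleteness argument that $r_I$ factors through $\mathrm{BL}_I$ and the principalization of $\Omega_I^2$, contradicting the minimality of $\pi_I$. You flag this as ``the main technical obstacle'' but leave it unaddressed, yet it is the heart of the argument. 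Secondarily, the stated equality $L_{\pi_I}=(m_v(I))_v$ is false: $L_{\pi_I}=(\ell_1^*\cdot E_v)_v$ is the intersection vector of the strict transform, not the multiplicity vector. These two vectors determine each other given $\Gamma_{\pi_I}$ through the (invertible) intersection matrix, so your downstream use of the inner rate formula survives, but the identification as stated is incorrect.
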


In order to prove Theorem \ref{belottofantininemethipichonbis} we will need a preliminary result which gives a relationship between the inner rates and the integral closure of ideals (see \cite{lejeunetessier} for more knowledge on the integral closure of ideals). 

\begin{prop}\label{integralclosure2}
	Let $(X,0)$ be a complex surface germ and $I$, $J$ be two $\mathfrak{m}$-primary ideals of $\mathcal{O}_{X,0}$. If the inner rates functions $\mathcal{I}_I$ and $\mathcal{I}_J$ are equal then $\overline{I}=\overline{J}$.
\end{prop}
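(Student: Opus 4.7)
The plan is to combine Lemma \ref{integralclosure} with the classical valuative characterization of the integral closure of an $\mathfrak{m}$-primary ideal. The statement is essentially a corollary of Lemma \ref{integralclosure}, once we recognize that the integral closure of an ideal is determined by its values on divisorial valuations.

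First, I would invoke Lemma \ref{integralclosure}: the hypothesis $\mathcal{I}_I = \mathcal{I}_J$ gives the existence of a common good resolution $\pi : (X_\pi, E) \to (X,0)$ factoring through both normalized blow-ups $\overline{\mathrm{BL}_I}$ and $\overline{\mathrm{BL}_J}$, together with the equality of ideal sheaves $I.\mathcal{O}_{X_\pi} = J.\mathcal{O}_{X_\pi}$. Reading this equality on each irreducible component $E_v$ of $E$, one obtains $m_v(I) = m_v(J)$, and since divisorial valuations of $\mathcal{O}_{X,0}$ are exactly the normalizations of such $\mathrm{Ord}_{E_v}$, this says that $v(I) = v(J)$ for every divisorial valuation $v$ of $\mathcal{O}_{X,0}$.

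Next, I would appeal to the standard valuative criterion for integral closure (see for instance Huneke--Swanson, \emph{Integral closure of ideals, rings, and modules}): for any ideal $K$ in the Noetherian local ring $\mathcal{O}_{X,0}$,
\[
\overline{K} \;=\; \bigl\{\, f \in \mathcal{O}_{X,0} \;\bigm|\; v(f) \geq v(K) \text{ for every divisorial valuation } v \text{ of } \mathcal{O}_{X,0}\,\bigr\}.
\]
Applying this characterization to both $I$ and $J$, and using that $v(I) = v(J)$ for every divisorial valuation $v$, the defining sets coincide, whence $\overline{I} = \overline{J}$.

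The only ingredient outside the paper is the valuative criterion, which is classical; the rest is a direct reuse of Lemma \ref{integralclosure}. I therefore do not anticipate any serious obstacle, the main conceptual point being simply that $\mathcal{I}_I$ determines the family of multiplicities $\{m_v(I)\}_v$ along all divisorial valuations.
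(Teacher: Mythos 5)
Your approach is correct in outline and takes a genuinely different route from the paper's proof. The paper applies Theorem \ref{jalabert} (the Lejeune--Teissier criterion: $h \in \overline{K}$ if and only if $h \circ \overline{\mathrm{BL}_K}$ is a section of $K.\mathcal{O}_{\overline{X_K}}$) directly to the common normalized blow-up produced by Lemma \ref{integralclosure}, getting $\overline{I} \subset \overline{J}$ and $\overline{J} \subset \overline{I}$ immediately by symmetry. You instead route the same information through the general valuative criterion for integral closure. These are equivalent characterizations and both are classical; the paper's choice has the advantage of resting on a statement already recorded in the text (Theorem \ref{jalabert}), so it needs no external reference.

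There is, however, a gap in the middle step. From $I.\mathcal{O}_{X_\pi} = J.\mathcal{O}_{X_\pi}$ (which, strictly speaking, is established inside the \emph{proof} of Lemma \ref{integralclosure}; its \emph{statement} gives equality on $\overline{X_I}$ and $\overline{X_J}$) you deduce $m_v(I) = m_v(J)$ only for the finitely many components $E_v$ of the exceptional divisor of that one resolution $\pi$, and then assert $v(I) = v(J)$ for \emph{every} divisorial valuation $v$ of $\mathcal{O}_{X,0}$ -- a strictly larger set. To close this you need either to observe that an equality of ideal sheaves on $X_\pi$ pulls back to an equality on every modification dominating $\pi$, and hence to an equality of multiplicities for every exceptional divisor, since any divisorial valuation is realized on a resolution dominating $\pi$; or to invoke the sharper form of the valuative criterion involving only the Rees valuations of $I$ (respectively $J$) -- these are exactly the divisors of the normalized blow-up, which is precisely where Lemma \ref{integralclosure} hands you $I.\mathcal{O}_{\overline{X_I}} = J.\mathcal{O}_{\overline{X_I}}$. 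Either repair is routine, but as written the passage is not justified.
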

To prove Proposition \ref{integralclosure2} we need the following Theorem

\begin{thm}[See {\cite[Theorem 1.1]{leinear}} or {\cite[Theorem 2.1]{lejeunetessier}}] \label{jalabert}\label{sheafI} Let $(X,0)$ be a germ of complex space. Let $I$ be an ideal of $\mathcal{O}_{X,0}$ and $h$ be an element of $\mathcal{O}_{X,0}$. The following conditions are equivalent:
\begin{enumerate}
	\item $h \in \overline{I}$ 
	\item The function $h \circ \pi$ is a section of $I \cdot \mathcal{O}_{\overline{X_{I}}}$, where $\overline{\mathrm{BL}_I}:(\overline{X_I}, \overline{E_I}) \longrightarrow (X,0)$ is the normalized blow-up of $I$.

\end{enumerate}

 \end{thm}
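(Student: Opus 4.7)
The plan is to prove the two implications separately, using the normality of $\overline{X_I}$ for one direction and the valuative criterion of integral closure for the other.

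\textbf{Direction (1) $\Rightarrow$ (2).} Suppose $h \in \overline{I}$, so there exist $a_1, \dots, a_n$ with $a_i \in I^i$ satisfying $h^n + a_1 h^{n-1} + \dots + a_n = 0$. Pull this relation back via $\overline{\mathrm{BL}_I}$. By the universal property of the blow-up (Proposition \ref{hironakablowupideal}), $I \cdot \mathcal{O}_{\overline{X_I}}$ is locally principal. Fix a point $p \in \overline{X_I}$ and a local generator $s \in \mathcal{O}_{\overline{X_I},p}$ of this ideal. Then $a_i \circ \overline{\mathrm{BL}_I} \in I^i \cdot \mathcal{O}_{\overline{X_I},p} = (s^i)$, so we can write $a_i \circ \overline{\mathrm{BL}_I} = s^i b_i$ for some $b_i \in \mathcal{O}_{\overline{X_I},p}$. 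Dividing the pulled-back relation by $s^n$ yields
$$\left(\tfrac{h \circ \overline{\mathrm{BL}_I}}{s}\right)^n + b_1 \left(\tfrac{h \circ \overline{\mathrm{BL}_I}}{s}\right)^{n-1} + \dots + b_n = 0,$$
a monic integral dependence for $h \circ \overline{\mathrm{BL}_I}/s$ over $\mathcal{O}_{\overline{X_I},p}$. Since $\overline{X_I}$ is normal, this ring is integrally closed in its fraction field, hence $h \circ \overline{\mathrm{BL}_I}/s \in \mathcal{O}_{\overline{X_I},p}$. Therefore $h \circ \overline{\mathrm{BL}_I} \in (s) = I \cdot \mathcal{O}_{\overline{X_I},p}$ at every point $p$, giving (2).

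\textbf{Direction (2) $\Rightarrow$ (1).} Suppose $h \circ \overline{\mathrm{BL}_I}$ is a section of $I \cdot \mathcal{O}_{\overline{X_I}}$. Since this ideal sheaf is invertible, it corresponds to an effective Cartier divisor $-D$ on $\overline{X_I}$ supported on $\overline{E_I}$; writing $\overline{E_I} = \bigcup_\alpha E_\alpha$ with associated divisorial valuations $v_{E_\alpha}$, we have $I \cdot \mathcal{O}_{\overline{X_I}} = \mathcal{O}_{\overline{X_I}}\bigl(-\sum_\alpha v_{E_\alpha}(I)\, E_\alpha\bigr)$. The hypothesis $h \circ \overline{\mathrm{BL}_I} \in I \cdot \mathcal{O}_{\overline{X_I}}$ is then equivalent to the inequalities $v_{E_\alpha}(h) \geq v_{E_\alpha}(I)$ for every irreducible component $E_\alpha$ of $\overline{E_I}$. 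The valuations $\{v_{E_\alpha}\}$ are precisely the \emph{Rees valuations} of $I$, and by the valuative criterion of integral closure (Rees's theorem) an element $g \in \mathcal{O}_{X,0}$ belongs to $\overline{I}$ if and only if $v(g) \geq v(I)$ for all Rees valuations $v$ of $I$. This concludes (1).

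\textbf{Main obstacle.} The direction (1) $\Rightarrow$ (2) is essentially formal once normality of $\overline{X_I}$ is invoked. The delicate point is (2) $\Rightarrow$ (1), which relies on the nontrivial fact that the exceptional divisorial valuations of the normalized blow-up form a sufficient family for testing integral closure — i.e., that they coincide with the Rees valuations of $I$. If one prefers to avoid quoting Rees's theorem, an alternative is to argue by contradiction using the valuative criterion for general valuation rings: any valuation $v$ with $v(h) < v(I)$ extends to a valuation centered on some point of $\overline{X_I}$ and must dominate a local ring there, forcing $h \circ \overline{\mathrm{BL}_I} \notin I \cdot \mathcal{O}_{\overline{X_I}}$ locally and contradicting the assumption.
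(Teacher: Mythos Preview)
The paper does not supply a proof of this statement: it is quoted as a classical result from \cite{leinear} and \cite{lejeunetessier} and used as a black box in the proof of Proposition~\ref{integralclosure2}. Your argument is the standard one and is correct; the direction (1)~$\Rightarrow$~(2) is exactly the usual normality argument, and for (2)~$\Rightarrow$~(1) your identification of the exceptional divisorial valuations of the normalized blow-up with the Rees valuations of $I$ is precisely the content of the cited theorems, so invoking Rees's criterion here is legitimate (though somewhat circular if one views that criterion as equivalent to the statement being proved --- your alternative sketch via the valuative criterion for arbitrary valuations is the cleaner way to make the argument self-contained).
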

 
\begin{proof}[Proof of Proposition \ref{integralclosure2}]
 	By Proposition \ref{integralclosure} there exists, up to isomorphism, a modification $$\pi_{IJ}:(\overline{X_{IJ}},\overline{E_{IJ}})\longrightarrow (X,0)$$ which is the common normalized blow-up of the ideals $I$ and $J$. Let us prove that $\overline{I}=\overline{J}$, let $h$ be an element of $\overline{I}$. By Theorem \ref{jalabert} the function $h \circ \pi_{IJ}$ is a section of $I.\mathcal{O}_{\overline{X_{IJ}}}$. By Proposition \ref{integralclosure} we have $I.\mathcal{O}_{\overline{X_{IJ}}}=J.\mathcal{O}_{\overline{X_{IJ}}}$ and it implies that $h \circ \pi_{IJ}$ is a section of $J.\mathcal{O}_{\overline{X_{IJ}}}$. Again, by Theorem \ref{jalabert} we get that $h$ is an element of $\overline{J}$. We proved that $\overline{I} \subset \overline{J}$. Symmetrically, we prove that $\overline{J} \subset \overline{I}$. \end{proof}
		\begin{rema}
		The converse of  Proposition \ref{integralclosure2} is false. Indeed, consider again the ideal $I$ of $\mathcal{O}_{\mathbb{C}^2,0}$ generated by $(x^2,y^2)$. The  integral closure $\overline{I}$ of the ideal $I$ is generated  by $(x^2,y^2,xy)$(see \cite[Proposition 3.4]{teissierencoreetencore}). Let $\omega_1=\mathrm{d}(x^2) \wedge \mathrm{d}(y^2)$ and $\omega_2=\mathrm{d}(x^2) \wedge \mathrm{d}(xy)$. Consider the modification  $$ \xymatrix{
   b:(X_2, E_{v_1} \cup E_{v_2})    \ar[r]^{ \ b_2}  & (X_1, E_{v_1}) \ar[r]^{b_1}   & (\mathbb{C}^2,0)    },$$ where $b_1$ is the blow-up of the origin of $\mathbb{C}^2$ and $b_2$ is the blow-up centered at the point $x^* \cap E_{v_1}$.  A direct computation using the holomorphic forms $\omega_1$ and $\omega_2$ shows that $\nu_{v_2}(I)= 5,m_{v_2}(I)=2$ and $\nu_{v_2}(\overline{I})=4,m_{v_2}(\overline{I})=2$. It follows that $$q_{v_2}^{I}= 2\neq \frac{3}{2} =q_{v_2}^{\overline{I}}.$$
	\end{rema}

In order to prove this Theorem \ref{belottofantininemethipichonbis}, we will also need the following result by Spivakovsky, which characterizes the Nash transform.		

\begin{thm}[{\cite[Theorem 1.2]{Spivakovsky1990}}]\label{spivaknash}
		A modification of $(X,0)$ factors through the Nash transform of $(X,0)$ if and only if it has no basepoints for the family of the polar curves associated to the linear projections on $\mathbb{C}^2$.
	\end{thm}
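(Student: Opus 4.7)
My plan is to identify the Nash transform $\nu: \mathcal{N}(X) \to (X,0)$ with the blow-up of an ideal sheaf $\mathcal{J}_X$ on $X$ constructed via Plücker coordinates of the Gauss map, and then to apply the universal property of blow-ups (Proposition \ref{hironakablowupideal}) together with the basepoint criterion of Proposition \ref{basepointofidealvssheaf}. Embedding $(X,0) \subset (\mathbb{C}^n,0)$, I consider the $2$-forms $\omega_{ij} := \mathrm{d}z_i \wedge \mathrm{d}z_j|_X \in \Omega_X^2$ for $1 \leq i < j \leq n$. At a smooth point $x$ of $X$, the values $\omega_{ij}(x)$ computed on a basis of $T_xX$ are precisely the Plücker coordinates of $T_x X$, so the rational map $x \mapsto [\omega_{ij}(x)]$ from $X$ to $\mathbb{P}(\wedge^2 \mathbb{C}^n)$ is the composition of the Gauss map with the Plücker embedding $\mathbb{G}(2, \mathbb{C}^n) \hookrightarrow \mathbb{P}(\wedge^2 \mathbb{C}^n)$. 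Consequently $\mathcal{N}(X)$, defined as the closure of the graph of the Gauss map, can be identified with the blow-up of the ideal sheaf $\mathcal{J}_X$ generated by the $\omega_{ij}$ (interpreted as generators of the torsion-free quotient of $\Omega_X^2$).

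For a linear projection $\ell = (\ell_1, \ell_2)$ with $\ell_k = \sum_i a_{ki} z_i$, the polar curve $\Pi_\ell$ is cut out by the $2$-form $\mathrm{d}\ell_1 \wedge \mathrm{d}\ell_2|_X = \sum_{i<j}(a_{1i}a_{2j} - a_{1j}a_{2i})\,\omega_{ij}$, whose coefficients $c_{ij}$ are precisely the Plücker coordinates of the $2$-plane spanned by the rows of the coefficient matrix. Thus the polar curves are exactly those curves in the linear system associated to $\mathcal{J}_X$ whose parameter $[c_{ij}]$ lies on the Grassmannian $\mathbb{G}(2, \mathbb{C}^n) \subset \mathbb{P}(\wedge^2 \mathbb{C}^n)$. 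For any modification $\sigma: Y \to (X,0)$ with $Y$ smooth (which we may assume by composing with a resolution) and any $p \in Y$, working in local coordinates adapted to $\sigma$ and factoring out the common divisor $e$ of the $\sigma^*\omega_{ij} = e \cdot h_{ij}$ shows that the condition ``$p$ lies on the strict transform of the curve cut out by $\sum c_{ij} \omega_{ij}$'' becomes $\sum c_{ij} h_{ij}(p) = 0$, a linear condition on $c \in \mathbb{P}(\wedge^2 \mathbb{C}^n)$. Since the Plücker embedding is projectively non-degenerate, a linear subspace of $\mathbb{P}(\wedge^2 \mathbb{C}^n)$ containing the Grassmannian must be the whole space; hence $\sigma$ has no basepoints for the polar family if and only if it has no basepoints for the full linear system associated to $\mathcal{J}_X$.

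The theorem then follows from the chain of equivalences: by Proposition \ref{hironakablowupideal}, $\sigma$ factors through $\nu = \mathrm{BL}_{\mathcal{J}_X}$ if and only if $\sigma^*\mathcal{J}_X$ is locally principal on $Y$; by Proposition \ref{basepointofidealvssheaf}, this is equivalent to $\sigma$ having no basepoints for the linear system associated to $\mathcal{J}_X$; and by the preceding observation, this is equivalent to $\sigma$ having no basepoints for the family of polar curves. For the forward direction a more direct geometric argument is available: if $\sigma = \nu \circ \tilde{\sigma}$ factors through $\mathcal{N}(X)$, the Gauss map extends continuously to $\bar{\gamma}: Y \to \mathbb{G}(2, \mathbb{C}^n)$, and for any $p \in \sigma^{-1}(0)$ any linear projection whose kernel $H$ satisfies $H \cap \bar{\gamma}(p) = 0$ (a Zariski-open condition on $H$) has its polar curve with strict transform avoiding $p$.

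The main obstacle is making the identification $\mathcal{N}(X) = \mathrm{BL}_{\mathcal{J}_X}$ fully rigorous, since $\Omega_X^2$ is not literally an ideal sheaf of $\mathcal{O}_X$: one must define $\mathcal{J}_X$ invariantly (e.g.\ as a Fitting ideal of the module of differentials) and verify that its blow-up coincides with the closure-of-graph definition of the Nash transform. A secondary subtlety is justifying the local factorization $\sigma^*\omega_{ij} = e \cdot h_{ij}$ for arbitrary modifications $\sigma$, which is bypassed by the reduction to the smooth case and a standard descent argument to conclude the basepoint-theoretic statement on $Y$ itself.
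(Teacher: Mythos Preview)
The paper does not give a proof of this statement: Theorem \ref{spivaknash} is simply quoted from \cite{Spivakovsky1990} as an external input, so there is no ``paper's own proof'' to compare against. Your proposal is therefore a genuine supplement rather than a reproduction.

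On its merits, your argument is sound and is essentially the classical route to Spivakovsky's characterization. The three ingredients are exactly the right ones: (a) identifying $\mathcal{N}(X)$ with the blow-up of the ideal generated by the Pl\"ucker sections $\omega_{ij}=\mathrm{d}z_i\wedge\mathrm{d}z_j|_X$, (b) recognizing that $\mathrm{d}\ell_1\wedge\mathrm{d}\ell_2|_X$ runs over those linear combinations $\sum c_{ij}\omega_{ij}$ whose coefficient vector lies on $\mathbb{G}(2,\mathbb{C}^n)\subset\mathbb{P}(\wedge^2\mathbb{C}^n)$, and (c) using the linear non-degeneracy of the Pl\"ucker embedding to pass from basepoints of the polar family to basepoints of the full linear system. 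Step (c) is the crux and you handle it correctly: the basepoint condition at $p$, after factoring the common exceptional divisor, becomes the linear condition $\sum c_{ij}h_{ij}(p)=0$; since this is closed and the Grassmannian is irreducible, holding on a Zariski-open subset forces it on all of $\mathbb{G}(2,\mathbb{C}^n)$, and then non-degeneracy gives $h_{ij}(p)=0$ for all $i,j$. The appeal to Propositions \ref{hironakablowupideal} and \ref{basepointofidealvssheaf} is legitimate once one observes that, since $X$ has an isolated singularity, the $\omega_{ij}$ have no common zero away from the origin, so the corresponding ideal is $\mathfrak{m}$-primary.

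The technical point you flag is real but standard: $\Omega_X^2$ modulo torsion is a rank-one torsion-free sheaf on a normal surface, hence reflexive and locally free away from $0$; any local generator identifies the $\omega_{ij}$ with elements of an $\mathfrak{m}$-primary ideal, well-defined up to a unit, and the closure-of-graph construction of $\mathcal{N}(X)$ in $X\times\mathbb{P}(\wedge^2\mathbb{C}^n)$ is then literally the blow-up of that ideal because the Pl\"ucker embedding is a closed immersion. Your reduction to $Y$ smooth is harmless for the basepoint statement, though one should note that the factorization conclusion descends because the universal property of blow-ups (Proposition \ref{hironakablowupideal}) only requires local principality of the pulled-back ideal, which is insensitive to further composition with a resolution.
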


	Before going through the proof, let us give a concrete example of the construction we will use to prove Theorem \ref{belottofantininemethipichonbis}	.
	\begin{exam}
		Let $(X,0)=(\mathbb{C}^2,0)$ and consider the family of integrally closed $\mathfrak{m}$-primary ideal $\{I_n\}_{n \in \mathbb{N}^*}$ of $\mathcal{O}_{\mathbb{C}^2,0}$  generated by $G_n=(x^k y^{n-k},0 \leq k \leq n )$. The set of generators $G_n$ is not complete. Indeed, the points $(1,0)$ and $(\sqrt[n]{1},0)$, where $\sqrt[n]{1}$ is a root of unity not equal to $1$,  are sent to the point $(0,0,\dots,1)$ by the morphism $(x,y) \mapsto (y^n,x y^{n-1},x^2y^{n-2},\dots,x^{n-1}y,x^n)$.

		  Let us consider the complete set of generators $$\widetilde{G}_n=(y^{n+1},y^n,x y^{n-1},x^2y^{n-2},\dots,x^{n-1}y,x^n,x^{n+1})$$ for the ideal $I_n$. By Theorem \ref{immersion} the image of the germ $(\mathbb{C}^2,0)$ by the holomorphic map $$F_n(x,y)=(y^{n+1},y^n,x y^{n-1},x^2y^{n-2},\dots,x^{n-1}y,x^n,x^{n+1})=(z_0,z_1,\dots,z_{n+2}) \in \mathbb{C}^{n+3}$$ is a complex surface germ $(X_n,0)$ with an isolated singularity at the origin of $\mathbb{C}^{n+3}$ given by the equations:



$$
   z_{n+1}^k    {z_1}^{n-k}    z_{k+1}   = z_{n+2}^kz_0^{n-k}, \ 0 \leq k \leq n.
$$Again, by Theorem \ref{immersion} the inner rate function $\mathcal{I}_{\mathfrak{m}_n} : \mathrm{NL}(X_n,0) \longrightarrow \mathbb{R}_{+}^* \cup \infty $, where $\mathfrak{m}_n$ is the maximal ideal of $\mathcal{O}_{X_n,0}$,  is completely determined by the inner rate function $\mathcal{I}_{I_n} : \mathrm{NL}(\mathbb{C}^2,0) \longrightarrow \mathbb{R}_{+}^* \cup \infty $ and since the ideals $I_n$ are distinct and integrally closed then, by Proposition \ref{integralclosure}, the inner rate functions $\{\mathcal{I}_{I_n}\}_{n \in \mathbb{N}^*}$ are also distinct. We can conclude using the inner rates formula \ref{laplacien} that $(\Gamma_{\pi_n},L_{\pi_n},P_{\pi_n}) \neq (\Gamma_{\pi_m},L_{\pi_m},P_{\pi_m}),$ for all $n \neq m$ in $\mathbb{N}$, where $\pi_n$ and $\pi_m$ are respectively the minimal good resolutions which factor through the blow-up of the maximal ideal and the Nash transform of $(X_n,0)$ and $(X_m,0)$ respectively.
		
	\end{exam}

	 

\begin{proof}[Proof of Theorem \ref{belottofantininemethipichonbis}]		 Let $I$ be an ideal of $\Lambda_X$. By Theorem \ref{immersion}, there exists a complete system of generators $F=(f_1,\dots,f_n)$ for $I$ and a  complex surface germ  $(X_I,0)\subset (\mathbb{C}^n,0)$ with an isolated singularity at the origin of  $\mathbb{C}^n$ such that the map $F_I=(f_1,\dots,f_n):(X,0) \longrightarrow (X_I,0)$ is a homeomorphism and a modification of the complex surface germ $(X_I,0)$ embedded in $\mathbb{C}^n$. By Remark \ref{remarkokanormalization},  the map  $F_I$ is in particular a morphism of normalization. Furthermore, the map $F_I$ is such that the following diagram commutes
$$ \xymatrix{
    \mathrm{NL}(X,0) \ar[r]^{\tilde{F}_I}  \ar[rd]^{\mathcal{I}_{I}} & \mathrm{NL}(X_I,0)  \ar[d]^{\mathcal{I}_{\mathfrak{m}_{I}}} \\
      & \mathbb{R}_{+}^* \cup \infty
  },$$  where $\mathfrak{m}_{I}$ is the maximal ideal of $\mathcal{O}_{X_I,0}$.

   Let $\pi_I:(X_{\pi},E)\longrightarrow (X,0)$ be the minimal good resolution which factors through the blow-up of $I$ and the principalization of $\Omega^2_I$.  The good resolution $F_I \circ \pi_I: (X_{\pi},E)\longrightarrow (X_I,0)$ factors through the blow-up of the maximal ideal of $\mathcal{O}_{X_I,0}$ and the Nash transform of $X_I$. Indeed, the family of linear forms $P=\{p_i(z_1,\dots,z_n)=z_i \}_{ 1 \leq i\leq n}$ generate the ideal $\mathfrak{m}_I$ and the differentials $\{\mathrm{d}p_i \wedge \mathrm{d}p_j\}_{1 \leq i,j \leq n}$ generates $\Omega_{\mathfrak{m}_I}^2$. Obviously, the family $\Pi_P$ coincide with  the polar curves associated to the linear projections on $\mathbb{C}^2$ and we have that the $P^*:=(p_1 \circ F_I =f_1, \dots ,p_n \circ F_I=f_n)=F$ which is a complete system of generators for the ideal $I$. By definition of  the blow-up of $I$ and the principalization of $\Omega_I^2$, the good resolution  $\pi_I: (X_{\pi},E)\longrightarrow (X,0)$ has no basepoints for the families $L_{P^*}=L_F$ and $\Pi_{P^*}=\Pi_F$ and it follows that the good resolution $F_I \circ \pi_{I}$ has no basepoints for the families $L_P$ and $\Pi_P$. Then, by Proposition \ref{hironakablowupideal} and Theorem \ref{spivaknash} we get that $F_I \circ \pi_{I}$  factors through the blow-up of the maximal ideal of $\mathcal{O}_{X_I,0}$ and the Nash transform of $(X_I,0)$. It remains to prove that $F_I \circ \pi_{I}$ is the minimal good resolution which have these properties. Assume it is not the case. Then, there exists a good resolution $\sigma_I:(Y_I,Z_I) \longrightarrow (X_I,0)$ which factors through $\mathrm{BL}_{\mathfrak{m}_I}$ and the Nash transform of $X_I$ such that  $F_I \circ \pi_I$ factors through $\sigma_I$. On the other hand, since $\sigma_I$ is a good resolution, it factors through the normalization  $F_I:(X,0) \longrightarrow (X_I,0)$. It implies that there exists a good resolution $r_I:(Y_I,Z_I) \longrightarrow (X,0)$  such that $F_I \circ r_I=\sigma_I$.

   $$ \xymatrix{
    (Y_I,Z_I) \ar[r]^{\sigma_I}  \ar[rd]^{r_{I}} & (X_I,0)   \\
 (X_{\pi},E) \ar[r]^{\pi_I} \ar[u]^{}  & (X,0) \ar[u]^{F_{I}}
  },$$

    In order to prove that $\sigma_I$ factors through $\pi_I$ we  prove that $r_I$ factors through the blow-up of $I$ and the principalization of $\Omega^2_I$. Indeed, $r_I^*(F_{I}^*(\mathfrak{m}_I))=r_I^*(I)=\sigma_I^*(\mathfrak{m}_I)$ which implies that $\sigma_I^*(\mathfrak{m}_I) \otimes \mathcal{O}_{Y_I}=r_I^*(I) \otimes \mathcal{O}_{Y_I}.$ By Proposition
     \ref{hironakablowupideal}, the ideal sheaf $\sigma_I^*(\mathfrak{m}_I) \otimes \mathcal{O}_{Y_I}$ is locally principal because $\sigma_I$ factors through $\mathrm{BL}_{\mathfrak{m}_I}$. It follows that $r_I^*(I) \otimes \mathcal{O}_{Y_I}$ is also locally principal which implies by Proposition \ref{hironakablowupideal} that $r_I$ factors through the blow-up $\mathrm{BL}_I$. Furthermore, $\sigma_I$ has no basepoints for the family $\Pi_P$ which implies that $r_I$  has no basepoints for the family $\Pi_F$, but it is not sufficient. In order for $r_I$ to factor through the principalization of $\Omega^2_I$ it needs to have no basepoints for any family $\Pi_{G}$, where $G$ is a precomplete system of generators for $I$. Let $G=(g_1,\dots,g_l)$ be a precomplete system of generators for $I$.  Since $G$ is precomplete then for every couple $(\alpha,\beta)  \in \mathbb{P}(\mathbb{C}^k) \times \mathbb{P}(\mathbb{C}^k) $ we have \begin{equation}\label{4.6}
      r_I^*(\mathrm{d}F_{\alpha} \wedge \mathrm{d}F_{\beta})=\sum_{i \neq j} \psi_{ij}^{\alpha \beta} \pi^*(\mathrm{d}g_i \wedge \mathrm{d}g_j). \end{equation}Let $p$ be a smooth point of an irreducible component $Z_v$ of $Z_I$. Let $(x,y)$ be a local coordinate system of $Y_I$ centered at $p$ such that $x=0$ is the equation of $Z_v$. By Lemma \ref{nu}  we have 
     $$r_I^*(\mathrm{d}F_{\alpha} \wedge \mathrm{d}F_{\beta})=x^{\nu_v(I)} S_{\alpha \beta}(x,y) \mathrm{d}x \wedge \mathrm{d}y, \ \text{with} \ S_{\alpha \beta}(0,y) \neq 0, $$ since $r_I$ has no basepoints for the family $\Pi_F$, there exists a Zariski open set $O_p$ of $\mathbb{P}(\mathbb{C}^k) \times \mathbb{P}(\mathbb{C}^k)$ such that $S_{\alpha \beta}(0,0) \neq 0$ for all $(\alpha,\beta) \in O_p$. It follows from the equality (\ref{4.6}) that there exists $i_0,j_0 \in \{1,\dots,l\}$ such that 
      $$ r_I^*(\mathrm{d}g_{i_0} \wedge \mathrm{d}g_{j_0})=x^{\nu_v(I)}S_{0}(x,y) \mathrm{d}x \wedge \mathrm{d}y $$ with $S_0(0,0)\neq 0$.  It implies that there exists a Zariski open set $W$ such that for any  $(\alpha,\beta)$  in $W$ $$r_I^{*}(\mathrm{d}G_{\alpha} \wedge \mathrm{d}G_{\beta})=x^{\nu_v(I)}U_{\alpha \beta}(x,y) \mathrm{d}x \wedge \mathrm{d}y,$$ where $U_{\alpha \beta}$ is a unit of $\mathbb{C}\{x,y\}$.  It proves that $p$ is not a basepoint for the family $\Pi_G$. We can do the same if $p$ is a double point.

Now, we have proved that $r_I$ factors through  the principalization of $\Omega^2_I$. By definition, it implies that $r_I$ factors through $\pi_I$. Finally, we proved that $\pi_I \circ F_I$ is indeed the minimal good resolution which factors through the blow-up of $\mathfrak{m}_I$ and the Nash transform of $(X_I,0)$.

 By Proposition \ref{integralclosure2}, for any two distinct integrally closed ideals $I$ and $J$ of $\Lambda_X$ we have  $\mathcal{I}_{\mathfrak{m}_I} \neq \mathcal{I}_{\mathfrak{m}_J}$ and it follows from Proposition \ref{nashvsrates} and the inner rates formula \ref{laplacien} that $(\Gamma_{\pi_I},L_{\pi_I},P_{\pi_I}) \neq (\Gamma_{\pi_J},L_{\pi_J},P_{\pi_J})$.
 

 \end{proof}

		\subsection{Application to Outer Lipschitz geometry} As a corollary of Theorem \ref{belottofantininemethipichonbis}, we can construct infinitely many complex surface germs with an isolated singularity that are not metrically equivalent in the Bilipschitz sense but have a common normalization.
		
		First, let us recall some definitions and results about the bilipschitz geometry of complex germs (See \cite{pichonintroduction} for more details). 

\begin{defi}
Let $(X,0)$ be a complex surface germ embedded in $\mathbb{C}^n$. The \textbf{outer metric} $\mathrm{d}_o$ on $X$ is the metric induced by the ambient Euclidian metric, i.e., for all $x,y \in X$, $\mathrm{d}_o(x,y)=|| x-y ||_{\mathbb{C}^n}$.
The \textbf{inner metric} $\mathrm{d}_i$ on $X$ is the arc length metric induced by the ambient riemannian metric.
\end{defi}
 \begin{defi}
 	Let $(M,d)$ and $(M',d')$ be two metric spaces. A map $f:M \longrightarrow M'$ is a \textbf{bilipschitz homeomorphism} if $f$ is a homeomorphism and there exists a real constant $K \geq 1$ such that 
 	
 	 $$ \frac{1}{K}d(x,y) \leq d'(f(x),f(y)) \leq Kd(x,y), \ \forall x,y \in M.$$
 \end{defi}
		\begin{defi}\label{bilequiv}
			Two analytic germs $(X,0)\subset (\mathbb{C}^n,0)$ and $(X',0)\subset (\mathbb{C}^m,0)$ are \textbf{inner Lipschitz equivalent} (resp. \textbf{outer Lipschitz equivalent}) if there exists a germ of bilipschitz homeomorphism $\psi:(X,0) \longrightarrow (X',0)$	with respect to the inner (resp. outer) metric. The equivalence class of the germ $(X,0) \subset (\mathbb{C}^n,0)$ for this equivalence relation is called the \textbf{inner Lipschiz geometry} (resp. \textbf{outer Lipschitz geometry}) of $(X,0)$.	\end{defi}
			
			One of the main motivations for studying complex surface germs up to bilipschitz equivalence is that the biholomorphic type of a complex germ determines its bilipschitz geometry. Another motivation follows from a theorem by Mostowski \cite{Mostowski1985a}, which states that the set of inner (resp. outer) Lipschitz geometries  is countable. While a complete classification of complex surface germs with isolated singularities has been given in \cite{BNP} the classification of complex surface germs up to outer Lipschitz equivalence is still open.

			 The next result aims to understand the set of outer Lipschitz geometries of complex surface germs with isolated singularities.
			
			\begin{coro}[Corollary \ref{coroM}]\label{belottofantinipichonlipschitzbis}
				Let $(X,0)$ be a normal complex surface germ. Let $\Lambda_X$ be the infinite set of  integrally closed $\mathfrak{m}$-primary ideals of $\mathcal{O}_{X,0}$ .	There exists a family of complex surface germs with an isolated singularity $\{(X_I,0)\}_{I \in \Lambda_X}$ such that

	\begin{enumerate}
		\item $(X,0)$ is homeomorphic to $(X_I,0)$ for all $I \in \Lambda_X$ and is the common normalization of the germs $\{(X_I, 0)\}_{I \in \Lambda_X}$.	
		
		\item For all $I,J \in \Lambda_X$ the surfaces $(X_I,0)$ and $(X_J,0)$ are outer Lipschitz equivalent if and only if $I=J$.
\end{enumerate}

							\end{coro}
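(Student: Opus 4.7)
The plan is to directly combine Theorem \ref{belottofantininemethipichonbis} (already established at this point in the paper) with the Neumann--Pichon theorem \cite[Theorem 1.2]{NP2016} recalled in the introduction. Specifically, I take the family $\{(X_I,0)\}_{I\in\Lambda_X}$ produced by Theorem \ref{belottofantininemethipichonbis}; property (1) of the corollary is then exactly property (1) of that theorem, so there is nothing further to do for the first point.

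For property (2), the direction $I=J\Rightarrow (X_I,0)$ and $(X_J,0)$ are outer Lipschitz equivalent is immediate since $(X_I,0)=(X_J,0)$ as germs. For the converse, I would argue by contraposition on the triples $(\Gamma_{\pi_I},L_{\pi_I},P_{\pi_I})$. Suppose $(X_I,0)$ and $(X_J,0)$ are outer Lipschitz equivalent, where $\pi_I$ and $\pi_J$ are the minimal good resolutions factoring through the blow-up of the maximal ideal and the Nash transform of $(X_I,0)$ and $(X_J,0)$ respectively. By the Neumann--Pichon theorem, the outer Lipschitz geometry of a complex surface germ with an isolated singularity determines the triple associated to this minimal good resolution, so we get $(\Gamma_{\pi_I},L_{\pi_I},P_{\pi_I})=(\Gamma_{\pi_J},L_{\pi_J},P_{\pi_J})$. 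Part (2) of Theorem \ref{belottofantininemethipichonbis} then forces $I=J$, concluding the proof.

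There is essentially no obstacle here beyond assembling the two ingredients: all of the work has been absorbed into the proof of Theorem \ref{belottofantininemethipichonbis}, which in turn relied on Theorem \ref{immersion}, the inner rates formula \ref{laplacien}, and Proposition \ref{integralclosure2}. The only subtlety worth mentioning is that in order to apply \cite[Theorem 1.2]{NP2016} one needs $(X_I,0)$ to be a complex surface germ with an isolated singularity embedded in some $\mathbb{C}^n$, which is guaranteed by the construction of Theorem \ref{immersion} used inside Theorem \ref{belottofantininemethipichonbis}.
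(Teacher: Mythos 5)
Your proposal is correct and matches the paper's argument exactly: the paper also states that Corollary \ref{belottofantinipichonlipschitzbis} is a direct consequence of Theorem \ref{belottofantininemethipichonbis} together with \cite[Theorem 1.2]{NP2016}, and your contrapositive argument for the nontrivial direction of point (2) is the intended (and only) step of the deduction.
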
 
	Corollary \ref{belottofantinipichonlipschitzbis} is a direct consequence of Theorem \ref{belottofantininemethipichonbis} and the following result of Neumann and Pichon

		\begin{thm}[{\cite[Theorem 1.2]{NP2016}}]\label{NP16}
		Let $(X,0)$ be a complex surface germ with an isolated singularity. Then the outer Lipschitz geometry of $X$ determines the triple $(\Gamma_{\pi},L_{\pi},P_{\pi})$ where $\pi$ is the minimal good resolution which factors through the blow-up of the maximal ideal of $\mathcal{O}_{X,0}$ and the Nash transform of $(X,0)$.
	\end{thm}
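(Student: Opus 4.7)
The plan is to derive this statement as a direct consequence of Theorem \ref{belottofantininemethipichonbis} combined with Theorem \ref{NP16} of Neumann and Pichon, both of which are already established (the former proved earlier in this paper, the latter cited from \cite{NP2016}). First, I would apply Theorem \ref{belottofantininemethipichonbis} to the given normal complex surface germ $(X,0)$ to obtain the family $\{(X_I,0)\}_{I \in \Lambda_X}$ of complex surface germs with isolated singularities. This application immediately yields statement (1) of the corollary: every $(X_I,0)$ is homeomorphic to $(X,0)$, and $(X,0)$ is the common normalization of the family.

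For statement (2), the forward implication is trivial: if $I=J$ then $(X_I,0)$ and $(X_J,0)$ are the same germ, hence outer Lipschitz equivalent via the identity. The nontrivial direction is the converse, which I would establish by contraposition. Suppose $I \neq J$ in $\Lambda_X$. By the second conclusion of Theorem \ref{belottofantininemethipichonbis}, the triples $(\Gamma_{\pi_I},L_{\pi_I},P_{\pi_I})$ and $(\Gamma_{\pi_J},L_{\pi_J},P_{\pi_J})$ are distinct, where $\pi_I$ and $\pi_J$ denote the respective minimal good resolutions factoring through the blow-ups of the maximal ideals and the Nash transforms of $(X_I,0)$ and $(X_J,0)$. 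By Theorem \ref{NP16}, the triple $(\Gamma_\pi,L_\pi,P_\pi)$ associated to a complex surface germ with isolated singularity is an invariant of its outer Lipschitz geometry. Hence if $(X_I,0)$ and $(X_J,0)$ were outer Lipschitz equivalent, the two triples would coincide, contradicting the conclusion of Theorem \ref{belottofantininemethipichonbis}. Therefore $(X_I,0)$ and $(X_J,0)$ cannot be outer Lipschitz equivalent.

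This argument presents no substantial obstacle, since both ingredients are available as black boxes. The only minor point to be careful about is the precise formulation of Theorem \ref{NP16}: it states that the outer Lipschitz geometry of a single germ determines its triple, and I would use this in the equivalent formulation that outer Lipschitz equivalent germs share the same triple, which follows at once from invariance. Putting the two conclusions together gives the required equivalence in statement (2), completing the proof of the corollary.
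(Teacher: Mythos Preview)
Your proposal does not address the statement you were asked to prove. The statement in question is Theorem~\ref{NP16}, namely the Neumann--Pichon result that the outer Lipschitz geometry of a complex surface germ with an isolated singularity determines the triple $(\Gamma_\pi, L_\pi, P_\pi)$. This is a cited theorem from \cite{NP2016}; the paper does not supply a proof of it, nor could one expect to, since its proof requires the full machinery developed in that reference. What you have written is instead a proof of Corollary~\ref{belottofantinipichonlipschitzbis}, which \emph{uses} Theorem~\ref{NP16} as one of its two inputs.

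If the intended target were indeed Corollary~\ref{belottofantinipichonlipschitzbis}, then your argument is correct and matches the paper's: the paper simply states that the corollary ``is a direct consequence of Theorem~\ref{belottofantininemethipichonbis} and the following result of Neumann and Pichon'' and then quotes Theorem~\ref{NP16}. Your write-up spells out the contrapositive reasoning a bit more explicitly, but the logic is identical. However, as a proof of Theorem~\ref{NP16} itself your proposal is circular, since you invoke Theorem~\ref{NP16} in the course of the argument.
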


	\thispagestyle{empty}

\bibliographystyle{alpha}

\addcontentsline{toc}{chapter}{Bibliography}
\bibliography{biblio}

\end{document}